\newlength\myindent
\title{Maximising the number of induced cycles in a graph\footnote{Work partially done during the 2015 Barbados Workshop on Structural Graph Theory. The attendance of the first author was possible due to the generous support of Merton College Oxford.}}
\author{Natasha Morrison\thanks{Mathematical Institute, University of Oxford, Woodstock Road, Oxford, OX2 6GG, United Kingdom. \newline  E-mail: \texttt{\{morrison,scott\}@maths.ox.ac.uk}.} \and Alex Scott\footnotemark[2]} 
\newtheorem{thm}{Theorem}[section]
\newtheorem{lem}[thm]{Lemma}
\newtheorem{prop}[thm]{Proposition}
\newtheorem{conj}[thm]{Conjecture}
\newtheorem{cor}[thm]{Corollary}
\theoremstyle{definition}
\newtheorem{defn}[thm]{Definition}
\newtheorem{ques}[thm]{Question}
\newtheorem{claim}[thm]{Claim}
\numberwithin{equation}{section}
\newtheoremstyle{case}{}{}{\normalfont}{}{\itshape}{\normalfont:}{ }{}
\theoremstyle{case}
\theoremstyle{case}
\newtheorem{subcase}{\bf{Subcase}}
\theoremstyle{case}
\theoremstyle{case}
\newtheorem{case2}{Case}
\def\comment#1{}
\numberwithin{equation}{section}
\begin{document}

\maketitle

\begin{abstract}
We determine the maximum number of induced cycles that can be contained in 
 a graph on $n\ge n_0$ vertices, and show that there is a unique graph that 
 achieves this maximum. This answers a question of Chv\'{a}tal and Tuza from the 1980s. 
 We also determine the maximum number of odd or even induced cycles that can be 
 contained in a graph on $n\ge n_0$ vertices and characterise the extremal 
 graphs.  This resolves a conjecture of Chv\'{a}tal and Tuza 
 from 1988.

\end{abstract}

\section{Introduction}
What is the maximum number of induced cycles in a graph on $n$ vertices?  For cycles of {\em fixed} length, this problem has been extensively studied.
Indeed, for any fixed graph $H$, let the {\em induced density} of $H$ in a graph $G$ be the number of induced 
copies of $H$ in $G$ divided by $\binom{|G|}{|H|}$; let $I(H;n)$ be the maximum induced density of $H$ over all graphs $G$ on $n$ vertices;
and let the {\em inducibility} of $H$ be the limit $\lim_{n\to\infty}I(H;n)$. 
In 1975, Pippinger and Golumbic \cite{pip} made the following 
conjecture.

\begin{conj}\cite{pip}
For $k\ge5$, the inducibility of the cycle $C_k$ is $k!/(k^k - k)$.
\end{conj}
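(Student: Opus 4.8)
The plan is to establish the two matching bounds separately, with the lower bound coming from an explicit construction and the upper bound being the substantial part; throughout, write $f(G)$ for the number of induced copies of $C_k$ in $G$ and $i(C_k)$ for the inducibility of $C_k$. For the lower bound I would analyse the balanced iterated blow-up of $C_k$: set $G_0=K_1$ and, for $j\ge1$, let $G_j$ be obtained from a fixed $k$-cycle by replacing each vertex with a disjoint copy of $G_{j-1}$ and joining two copies completely exactly when the corresponding cycle vertices are adjacent, so $|G_j|=k^j$. The structural fact to exploit, valid precisely because for $k\ge5$ no two vertices of $C_k$ have the same neighbourhood, is that every induced copy of $C_k$ in $G_j$ either lies inside one of the $k$ copies of $G_{j-1}$ or meets each of them in exactly one vertex: an induced copy meeting some block in two vertices $u,v$ would have $u$ and $v$ sharing all neighbours outside that block, and a short case analysis according to whether $0$, $1$ or $2$ of $u$'s two cycle-neighbours lie outside the block yields either a chord of the copy or the conclusion that the copy is a $C_4$ — impossible for $k\ge5$. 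Since conversely every transversal of the $k$ blocks induces a $C_k$, we get $f(G_j)=k\,f(G_{j-1})+(k^{j-1})^k$; solving this linear recursion and using $\binom{k^j}{k}\sim k^{kj}/k!$ gives $f(G_j)/\binom{k^j}{k}\to k!/(k^k-k)$, hence $i(C_k)\ge k!/(k^k-k)$.

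For the upper bound the goal is to show $f(G)\le(1+o(1))\tfrac{k!}{k^k-k}\binom{n}{k}$ for every $n$-vertex graph $G$, and I would assemble two ingredients. The first is a density bound: for each fixed (small) $k$, a flag-algebra semidefinite computation should certify $f(G)\le\big(\tfrac1{k^k-k}+o(1)\big)n^k$. The second is a stability statement: that every near-extremal $G$ admits a partition $V_1\cup\dots\cup V_k$ with respect to which $G$ differs in only $o(n^2)$ pairs from a ``$C_k$-cyclic blow-up'', meaning complete bipartite graphs between cyclically consecutive classes, no edges between non-consecutive classes, and arbitrary graphs inside the classes. Granting this, the induced copies of $C_k$ meeting between $2$ and $k-1$ of the classes number only $o(n^k)$, so the recursion $f(G)\le\sum_{i=1}^k f(G[V_i])+n_1\cdots n_k+o(n^k)$ holds; feeding in the inductive hypothesis $f(G[V_i])\le\big(\tfrac1{k^k-k}+o(1)\big)n_i^k$ and invoking the elementary inequality $\sum_i n_i^k+(k^k-k)\prod_i n_i\le\big(\sum_i n_i\big)^k$ — with equality when all $n_i$ are equal, or when one of them equals the sum — closes the induction; an exact, non-asymptotic version of both ingredients would moreover pin down the iterated balanced blow-up as the unique extremal graph.

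The main obstacle is the stability statement. For a general host graph and a general $k$-partition the induced copies of $C_k$ spanning some but not all of the parts are not controlled, and for many graphs $H$ the inducibility is genuinely not attained by iterated blow-ups, so what is required is a structural argument special to $C_k$ with $k\ge5$, showing that any departure from the cyclic-blow-up structure strictly wastes induced cycles. For small $k$ this can plausibly be extracted from the flag-algebra certificate together with a rounding argument giving exact extremality, in the style of other inducibility results; for general $k$ I expect one needs a genuinely uniform idea — most plausibly an analytic reformulation in which an extremal graphon is forced to be self-similar, a $C_k$-cyclic blow-up of a rescaled copy of itself, from which the value $k!/(k^k-k)$ falls out of the resulting fixed-point equation — and I would expect this part to absorb the bulk of the work.
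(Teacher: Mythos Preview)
The statement you are attempting to prove is not a theorem of the paper: it is quoted there as a \emph{conjecture} of Pippinger and Golumbic, and the paper explicitly notes that, apart from the case $k=5$ settled by Balogh, Hu, Lidick\'y and Pfender via flag algebras, ``the problem remains open.'' There is therefore no proof in the paper to compare your proposal against.

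Your lower bound via the iterated balanced blow-up is the standard construction and is correct; it is exactly the one Pippinger and Golumbic used to motivate the conjecture. Your upper-bound outline, however, is not a proof but a research program, and you yourself identify the gap accurately: the stability statement (that near-extremal graphs must be close to a cyclic blow-up) is the whole difficulty, and it is genuinely open for $k\ge6$. The flag-algebra step you invoke is not known to succeed for general $k$ --- indeed, a uniform flag-algebra certificate for all $k$ would itself be a breakthrough --- and the analytic/graphon self-similarity idea you sketch at the end is a plausible heuristic but not a method that has been made to work for this problem. In short, your proposal correctly isolates where the obstruction lies, but does not overcome it; the conjecture remains open.
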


Balogh, Hu, Lidick\'{y} and Pfender \cite{bal} recently proved this conjecture 
in the case $k=5$ via a flag algebra method, and showed that the maximum density 
was achieved by a unique graph. Apart from this case, the problem remains open (though see
\cite{behj,bnt,bs,el,exoo,hhn,h} 
for results on inducibility of other graphs).

In this paper, we consider the total number of induced cycles, without restriction on length.  This problem was raised in the 1980s by Chv\'{a}tal and Tuza (see \cite{tuz} and \cite{tuz3}), who asked for the 
 maximum possible number of induced cycles in a graph with $n$ vertices. 
 The problem was investigated independently in unpublished work of Robson, who 
 showed in the 1980s that a graph on $n$ vertices has at most 
 $3^{(1+o(1))n/3}$ induced cycles (\cite{bm,mr}). Tuza also raised a second, closely related problem 
 on induced cycles. In 1988 he 
  conjectured with Chv\'{a}tal (see \cite{tuz2}, \cite{tuz} and \cite{tuz3}) that the maximum possible number of odd induced 
  cycles in a graph on $n$ vertices is $3^{n/3}$.
  
  In this paper we resolve both problems, proving exact bounds for all 
   sufficiently large $n$, and determining the extremal graphs. Our methods 
   work for a number of problems of this type: thus we will determine, for 
   sufficiently large $n$, the graphs with $n$ vertices that maximize the number 
   of induced cycles (Theorem \ref{main}); we will also determine the graphs with the maximum 
   number of even induced cycles, the maximum number of odd induced cycles (Theorem \ref{oddcycle}), and in Theorem \ref{oddholes}, the maximum 
   number of odd holes (i.e. induced odd cycles of length at least 5).
  
In order to state our results, it is helpful to have a couple of definitions.
As usual, for $G$ a graph define the \emph{neighbourhood} of $x$ to be 
$N_{G}(x):= \{y \in V(G): xy \in E(G)\}$. A graph $B$ is called a \emph{cyclic braid} if there exists $k \ge 3$ and a partition 
$B_1,\ldots, B_k$ of $V(B)$ such that for every $1 \le i \le k$ 
and every $x \in B_i$, we have $B_{i-1}\cup B_{i+1} \subseteq N_{B}(x) 
\subseteq B_{i-1}\cup B_{i} \cup B_{i+1}$ where indices are taken modulo $k$. For such a partition, the notation $B = (B_1,\ldots,B_k)$ is used. The sets $B_i$ are called \emph{clusters} of $B$; the \emph{length} of 
the cyclic braid is the number of clusters. If a cyclic braid contains no edges 
within its clusters, it is called an \emph{empty cyclic braid}. Observe that an empty cyclic braid is $k$-partite and, when $k>3$, is triangle free. If a cyclic braid contains 
every possible edge within each cluster, then it is called \emph{full}. A pair of 
clusters $B_1$ and $B_2$ are \emph{adjacent} in $G$ if $v_1v_2 \in E(G)$ for all 
$v_1 \in B_1$ and $v_2 \in B_2$. A triple of clusters $B_1, B_2,B_3$ are 
\emph{consecutive} if $B_1$ is adjacent to $B_2$ and $B_2$ is adjacent to $B_3$.

As it turns out, the structure of the extremal graph depends on the value of 
$n$ modulo 3. For $n \ge 8$ define an $n$-vertex graph $H_n$ 
separately for each value of $n$ modulo 3. Let $k \ge 3$. Define $H_{3k}$ to 
be the empty cyclic braid of length $k$ where every cluster has size 3. Define $H_{3k +1}$ to be the empty cyclic braid containing $k-1$ clusters of 
size 3 and one of size 4. Finally, define $H_{3k-1}$ to be the empty cyclic 
braid containing $k-1$ clusters of size 3 and one of size 2. 

Let $m(n)$ be the maximum number of induced cycles that can be contained in a graph on $n$ vertices. The main result of our paper is the following.

\begin{thm}\label{main}
There exists $n_0$ such that, for all $n \ge n_0$, $H_n$ is the unique graph on $n$ vertices containing $m(n)$ induced cycles.
\end{thm}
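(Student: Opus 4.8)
The plan is to first establish, via a counting/entropy-type argument, that any $n$-vertex graph $G$ with close to the extremal number of induced cycles must have a very rigid global structure --- essentially that it is (close to) an empty cyclic braid with all clusters of size roughly $3$. I would begin by computing exactly the number of induced cycles in $H_n$: in an empty cyclic braid $(B_1,\dots,B_k)$ with cluster sizes $b_1,\dots,b_k$, an induced cycle is obtained by choosing a cyclic interval of consecutive clusters, picking exactly one vertex from each cluster in the interval (for the ``bulk''), plus a small number of exceptional short cycles using non-consecutive clusters when $k$ is small; for all clusters of size $3$ this gives something of order $3^{n/3}$, and the precise constant (together with the lower-order terms coming from clusters of size $2$ or $4$) is what singles out $H_{3k}, H_{3k\pm1}$ among all empty cyclic braids of the three residue types. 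This calculation also tells us the target value $m(n)$ up to the relevant precision and shows why size-$3$ clusters beat all other cluster-size vectors with the same sum.

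Next I would prove the stability statement: there is a constant $c<1$ such that any $n$-vertex graph with more than $c\cdot 3^{n/3}$ induced cycles is already a (possibly non-empty, possibly with slightly irregular cluster sizes) cyclic braid. The natural route is a supersaturation / link argument: induced cycles through a fixed vertex $v$ correspond to induced paths in $G-N[v]$ between the two neighbours of $v$ on the cycle, so a vertex in many induced cycles forces rich induced-path structure in its non-neighbourhood; iterating and using that the total count is exponentially large in $n/3$, one shows the vertex set breaks into a cyclically ordered family of ``clusters'' with the braid adjacency pattern, because any deviation (a long induced path that is not of braid type, a vertex with the wrong neighbourhood, a chord inside a cluster, or a cluster of size $\ge 5$) strictly loses a constant factor in the exponent. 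I expect this is where most of the technical work lives, and it is the main obstacle: one must rule out a large number of ``almost-extremal but wrong'' configurations, and control how an induced cycle count degrades under each local modification --- in particular, handling the boundary between ``genuinely braid-like'' graphs and graphs that merely have high cycle count for spurious reasons (e.g. near-cliques, which have exponentially many non-induced cycles but must be shown to have few induced ones).

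Once $G$ is known to be a cyclic braid $(B_1,\dots,B_k)$, I would run a local optimisation / exchange argument on the finite list of remaining parameters: the number of clusters $k$, the cluster sizes $b_1,\dots,b_k$, and the edges inside clusters (``full'' vs.\ ``empty'' vs.\ intermediate). Using the explicit formula for the number of induced cycles as a function of these parameters, I would show: (i) adding any edge inside a cluster strictly decreases the count (such an edge kills all induced cycles passing through both endpoints of the edge via that cluster while creating none), so the extremal braid is empty; (ii) any cluster of size $\ge 4$ can be split off, or any cluster of size $1$ merged, to increase the count, forcing all sizes into $\{2,3,4\}$ with at most one non-$3$ cluster, and then a final comparison of the three residue classes pins down exactly $H_n$; and (iii) short cyclic braids ($k=3,4$) are handled separately by a direct estimate since their exceptional short cycles change the formula. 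Uniqueness then follows because every inequality in the exchange argument is strict for $n$ large.

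The one quantitative engine underlying all of this is the bound, valid for all $n\ge n_0$, that a non-braid graph has at most $c\,3^{n/3}$ induced cycles with $c<1$ a fixed constant; given that, the rest is a finite optimisation made rigorous by the exact counting formula. I would therefore structure the paper as: Section on exact counting in empty cyclic braids and identification of $H_n$; Section proving the stability theorem (the hard part, likely via an induction on $n$ peeling off a vertex or a cluster and a case analysis on the ``link'' structure); and a short concluding Section doing the exchange argument and deducing Theorem~\ref{main}. The genuine difficulty, and where I would spend the bulk of the effort, is making the stability step clean --- in particular finding the right inductive quantity so that ``lose a constant factor for each defect'' can be iterated without the errors accumulating past the gap between $m(n)$ and $c\,3^{n/3}$.
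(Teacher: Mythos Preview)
Your overall architecture---first a stability step showing an extremal graph must be a cyclic braid, then a finite optimisation over cluster sizes and intra-cluster edges---matches the paper's. Where your proposal diverges, and where it is too vague to be convincing, is precisely in the stability step you flag as the main obstacle.

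The paper does not use an induction on $n$ peeling off vertices, nor a supersaturation argument in the usual sense. Instead it introduces two purpose-built objects: an \emph{exploration tree} $T(v)$ encoding all induced paths and cycles from a vertex $v$, and a two-player \emph{typical game} on $G$ (equivalently on $T(v)$) whose winning strategy for Builder certifies that a vertex $w$ has the correct braid-like second neighbourhood. The technical heart is Lemma~\ref{genatypcount}: a carefully engineered walk down $T(v)$ (the Vertex Choice Algorithm) tracking a multiplicative potential $q_i$, which shows that each atypical vertex forces a $(1+\epsilon)$ gain in this potential, so only $O(1)$ vertices can be atypical. This is exactly the ``lose a constant factor per defect without the errors accumulating'' mechanism you are searching for, but it is implemented via the tree and the game rather than by induction on $n$. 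An inductive peeling argument faces precisely the accumulation problem you anticipate, and you have not indicated how to overcome it; the paper sidesteps it by analysing the full leaf count below a single root in one pass.

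Two further points. First, the paper does \emph{not} establish a clean dichotomy ``non-braid $\Rightarrow$ at most $c\cdot 3^{n/3}$ induced cycles for some fixed $c<1$''. Rather, it shows all but $O(1)$ vertices are typical, deduces the existence of a long $3$-braid, contracts it, and then invokes the separately proved induced-path result (Theorem~\ref{thm:path}) on a bounded-size auxiliary graph to force the full cyclic-braid structure on $G_{\max}$ (Lemma~\ref{cbraid}). Second, your claim in step~(i) that an intra-cluster edge ``creates none'' is incorrect: such an edge creates triangles with each vertex of the two adjacent clusters. The paper's Lemma~\ref{noedges} is a short but genuine comparison of triangles gained against $C_4$'s lost, not a one-line observation.
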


More precisely, 

\begin{cor}\label{maincor}
There exists $n_0$ such that, for all $n \ge n_0$: 
$$m(n) = \left\{ \begin{array}{c l}      
    3^{n/3} + 12n & \text{ for } n \equiv 0 \text{ modulo 3};\\
    4 \cdot 3^{(n-4)/3} +  12n + 51 & \text{ for } n \equiv 1 \text{ modulo 3};\\
    2 \cdot 3^{(n-2)/3} + 12n - 36 & \text{ for } n \equiv 2 \text{ modulo 3}.\\
\end{array}\right.$$ 
\end{cor}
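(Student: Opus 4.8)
The plan is to derive Corollary~\ref{maincor} from Theorem~\ref{main} by counting the induced cycles of the extremal graph $H_n$. For $n$ large, write $H_n=(B_1,\dots,B_k)$ with the cluster sizes prescribed in the definition, so that $k$ equals $n/3$, $(n-1)/3$ or $(n+1)/3$ in the three residue classes, with one cluster possibly of size $4$ or $2$ and all others of size $3$. Since $H_n$ is an \emph{empty} cyclic braid there are no edges inside any $B_i$, so the defining inclusion $B_{i-1}\cup B_{i+1}\subseteq N_{H_n}(x)\subseteq B_{i-1}\cup B_i\cup B_{i+1}$ forces $N_{H_n}(x)=B_{i-1}\cup B_{i+1}$ for every $x\in B_i$. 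Hence $H_n$ is the blow-up of the cycle $C_k$ obtained by replacing vertex $i$ with the independent set $B_i$ and joining consecutive clusters completely; in particular two vertices of $H_n$ are adjacent if and only if their clusters are consecutive in $C_k$, and, since $k$ is large, clusters at distance at least $2$ in $C_k$ are non-adjacent.

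The first step is to classify the induced cycles of $H_n$. Let $C$ be such a cycle. Every edge of $C$ joins consecutive clusters, so $C$ projects to a closed walk in $C_k$, which lifts to a walk in $\mathbb{Z}$ starting at some integer $a$ and ending at $a+wk$, where $w$ is the winding number. Using only that two vertices in the same cluster are non-adjacent and that two vertices in consecutive clusters are adjacent, one obtains two local facts: if some cluster meets $C$ in at least three vertices, then any vertex of $C$ lying in an adjacent cluster (one exists, since a cluster is met whenever an incident edge of $C_k$ is traversed) has a chord; and if two consecutive clusters each meet $C$ in at least two vertices, then $C$ is exactly a $4$-cycle inside those two clusters. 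A short examination of the range of the lifted walk now forces $|w|\le 1$; shows that when $w=\pm1$ every cluster is met exactly once, so $C$ is a \emph{transversal} induced copy of $C_k$ using one vertex from each cluster; and shows that when $w=0$ the walk omits some edge of $C_k$, whence $C$ avoids one of that edge's endpoint clusters and so lies in the blow-up of a sub-path of $C_k$. The same local facts applied inside a path blow-up show that every induced cycle there is a $4$-cycle occupying either two consecutive clusters (two vertices in each) or three consecutive clusters (one vertex in each of the outer two, two vertices in the middle one).

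Given this classification, the count splits into three disjoint families. The transversal copies of $C_k$ number $\prod_{i=1}^{k}|B_i|$, since every transversal selection induces a $C_k$ and conversely. The $4$-cycles inside two consecutive clusters number $\sum_{i=1}^{k}\binom{|B_i|}{2}\binom{|B_{i+1}|}{2}$, and those meeting three consecutive clusters, indexed by their unique middle cluster, number $\sum_{i=1}^{k}|B_{i-1}|\binom{|B_i|}{2}|B_{i+1}|$. When all $|B_i|=3$ this gives $3^{k}+9k+27k=3^{k}+36k$, which since $n=3k$ equals $3^{n/3}+12n$. When one cluster has size $4$ and the rest size $3$, the three terms become $4\cdot 3^{k-1}$, $9k+18$ and $27k+45$ (the excess coming from the edges and triples meeting the size-$4$ cluster), so the total is $4\cdot 3^{k-1}+36k+63=4\cdot 3^{(n-4)/3}+12n+51$. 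When one cluster has size $2$ they become $2\cdot 3^{k-1}$, $9k-12$ and $27k-36$, with total $2\cdot 3^{k-1}+36k-48=2\cdot 3^{(n-2)/3}+12n-36$. These are the asserted values of $m(n)$.

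The main obstacle is the classification of the induced cycles of $H_n$: each individual chord argument is elementary, but combining them to pin down the winding number and the precise local structure takes some care, and it is natural to package this as a separate lemma on the cycle structure of (empty) cyclic braids, usable elsewhere in the paper. Once that is in place, the counting and the ensuing arithmetic are routine.
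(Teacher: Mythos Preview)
Your proposal is correct and follows exactly the route the paper intends: the paper states Corollary~\ref{maincor} as an immediate consequence of Theorem~\ref{main} and leaves the computation of $f(H_n)$ to the reader, so there is nothing to compare beyond noting that your classification of induced cycles in $H_n$ (transversal $C_k$'s plus the two kinds of $C_4$'s) is the same one the paper uses implicitly in the proof of Lemma~\ref{1mod3}, and your counts $3(n+5)$ and $9(n+4)$ in the $n\equiv 1$ case match the numbers quoted there.
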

Corollary \ref{maincor} implies that $m(n) = \Theta(3^{n/3})$.  

A \emph{hole} is an induced cycle of length at least 4. For $n \ge 10$, the graph $H_n$ is triangle free and each induced cycle is a hole, so Theorem \ref{main} implies the following.

\begin{cor}
There exists $n_0$ such that, for all $n \ge n_0$, $H_n$ is the unique graph on $n$ vertices with the maximum number of holes.
\end{cor}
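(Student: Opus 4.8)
The plan is to derive this immediately from Theorem~\ref{main}, exploiting the fact that for large $n$ the extremal graph $H_n$ is triangle-free, so that its induced cycles and its holes are literally the same objects. First I would take $n_0$ to be the threshold furnished by Theorem~\ref{main}, which we may assume satisfies $n_0\ge 11$. For every $n\ge n_0$ the graph $H_n$ is then an empty cyclic braid of length $k\ge 4$, and such a braid is triangle-free by the observation recorded just after the definition of a cyclic braid. In particular $H_n$ contains no induced $C_3$, so the collection of induced cycles of $H_n$ coincides with the collection of holes of $H_n$; by Theorem~\ref{main} this collection has size $m(n)$.

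Next I would use the trivial fact that, for an arbitrary graph $G$, the number of holes of $G$ is at most the number of induced cycles of $G$ (the difference being exactly the number of triangles of $G$). Together with the definition of $m(n)$, this shows that every $n$-vertex graph has at most $m(n)$ holes, and since $H_n$ meets this bound, the maximum number of holes in an $n$-vertex graph is $m(n)$, attained by $H_n$. For uniqueness, suppose $G$ is an $n$-vertex graph with $n\ge n_0$ and exactly $m(n)$ holes. Since every hole is an induced cycle, $G$ has at least $m(n)$ induced cycles; combined with the upper bound $m(n)$ coming from the definition of $m(n)$, the graph $G$ has exactly $m(n)$ induced cycles, and Theorem~\ref{main} then forces $G=H_n$.

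I do not expect a genuine obstacle here: all of the difficulty is contained in Theorem~\ref{main}, and the corollary is essentially a one-line deduction once that theorem is in hand. The only points deserving a moment's attention are checking that $H_n$ really is triangle-free for all $n\ge n_0$ — so that no induced cycles are lost when one restricts to holes — and observing that the inequality between hole-count and induced-cycle-count points in the direction needed for the uniqueness statement for induced cycles to transfer verbatim to holes.
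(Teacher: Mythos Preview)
Your proposal is correct and follows exactly the paper's own reasoning: the paper simply notes that for large $n$ the graph $H_n$ is triangle-free, so its induced cycles are precisely its holes, and the corollary follows directly from Theorem~\ref{main}. You have additionally spelled out the uniqueness direction (any $n$-vertex graph with $m(n)$ holes has at least $m(n)$ induced cycles, hence exactly $m(n)$, hence is $H_n$), which the paper leaves implicit but which is indeed the right way to complete the argument.
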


Using similar arguments to those in the proof of Theorem \ref{main} we also prove a stability-type result. 
\begin{thm}\label{stab}
Fix $0 < \alpha < 1$. There exists a constant $C= C(\alpha)$ and $n_0 = n_0(\alpha)$ such that for any $n \ge n_0$, if a graph $F$ on $n$ vertices contains at least $\alpha \cdot m(n)$ induced cycles, then by adding or deleting edges incident to at most $C(\alpha)$ vertices of $F$, the graph $F$ can be transformed into a cyclic braid with the same cluster sizes as $H_n$.
\end{thm}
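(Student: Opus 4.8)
The plan is to prove Theorem~\ref{stab} as a ``soft'' companion to the proof of Theorem~\ref{main}, reusing the machinery but stopping well short of the exact count. Fix $\alpha\in(0,1)$. Suppose $F$ has $n$ vertices and at least $\alpha\cdot m(n)$ induced cycles; by Corollary~\ref{maincor} this is at least $\alpha\cdot 3^{(1+o(1))n/3}$, so in particular $F$ has exponentially many induced cycles. The first step is to extract a large ``backbone'' structure. I would argue that a graph with exponentially many induced cycles cannot be too far from a cyclic braid: more precisely, I expect that the proof of Theorem~\ref{main} already establishes, en route, a statement of the form ``if $G$ has at least $c\cdot 3^{n/3}$ induced cycles (for a suitable constant $c$), then all but a bounded number of vertices of $G$ lie in a cyclic braid $B$ in which almost all clusters have size $\le 3$.'' The strategy is to isolate this intermediate claim and apply it with $c$ a constant depending on $\alpha$ (valid since $\alpha\cdot m(n)\ge \alpha c_0 3^{n/3}$ for some absolute $c_0$).

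Given such a near-braid decomposition, the second step is to clean it up into an honest cyclic braid on \emph{all} $n$ vertices by modifying a bounded number of vertices. Let $W$ be the set of ``exceptional'' vertices not captured by the braid structure; by the intermediate claim $|W|\le C_1$ for some $C_1=C_1(\alpha)$. Delete all edges incident to $W$ and re-insert each $w\in W$ into some cluster of the braid $B$ (choosing the cluster so as to respect the braid adjacency pattern, or creating a new small cluster if necessary); this changes edges at only $|W|\le C_1$ vertices and produces a cyclic braid $B'$ on $V(F)$. Next I would bound the number of clusters of $B'$ whose size differs from $3$. If a bounded-length stretch of the braid had many clusters of size $\ge 4$, or a positive fraction of clusters of size $\le 2$, a short counting argument (the number of induced cycles of a cyclic braid is governed, up to polynomial factors, by a product/transfer-matrix expression over the cluster sizes, maximised by all-$3$'s subject to the residue of $n$ mod $3$) shows the induced-cycle count would drop below $\alpha\cdot m(n)$; hence all but $C_2(\alpha)$ clusters have size exactly $3$, and the total excess/deficit of the remaining clusters is bounded.

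The third step is to massage the finitely many bad clusters so that the cluster-size multiset becomes exactly that of $H_n$ — namely all $3$'s, or $k-1$ threes and one four, or $k-1$ threes and one two, according to $n\bmod 3$. Since only $C_2(\alpha)$ clusters are bad and their total size is within a bounded amount of $3$ times their number, I can split oversized clusters and merge undersized ones, each such local surgery touching a bounded number of vertices and preserving the cyclic braid property; after at most $C_3(\alpha)$ such moves the cluster sizes match $H_n$. Setting $C(\alpha)=C_1+C_2+C_3$ (suitably combined) and $n_0(\alpha)$ large enough that all the counting inequalities above are strict, we obtain the theorem.

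The main obstacle is the first step: extracting the intermediate ``near-braid'' structural statement with constants that degrade gracefully as $\alpha\to 0$. In the exact proof of Theorem~\ref{main} one can afford to work with $G$ having \emph{nearly} the maximum number of induced cycles, where error terms are lower-order; here the hypothesis is only a constant-factor bound, so the argument that forces a braid structure must tolerate losing a constant factor rather than a $(1+o(1))$ factor. Concretely, the danger is that a graph could use a ``few'' extra edges or a few larger clusters to gain a constant factor in the cycle count while remaining structurally close to a braid — I would need the counting estimates to be robust enough to show that any such deviation either costs more than it gains or is itself boundedly small, so that it can be absorbed into the $C(\alpha)$ vertices that get modified. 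I expect this to be exactly the place where the paper invokes the heavier lemmas developed for Theorem~\ref{main}, applied with a constant (rather than $1-o(1)$) threshold.
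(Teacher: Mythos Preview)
Your outline is in the right spirit, and the paper does prove Theorem~\ref{stab} by reusing the structural machinery from Theorem~\ref{main} (specifically Lemma~\ref{genatypcount} via Corollary~\ref{const}, together with Lemma~\ref{structure}) with constants depending on $\alpha$. However, there is one concrete gap in your Step~1.

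The intermediate claim you plan to invoke --- ``if $G$ has at least $c\cdot 3^{n/3}$ induced cycles then all but a bounded number of vertices lie in a 3-braid'' --- is essentially Corollary~\ref{const}, but that result carries the hypothesis $\Delta(F)\le\Delta$ for some fixed constant; without it the parameters in Lemma~\ref{genatypcount} (namely $\epsilon=2^{-\Delta^{100}}$ and the resulting $M$) blow up. In the proof of Theorem~\ref{main} the degree bound is supplied by Lemma~\ref{deg}, whose argument uses the \emph{extremality} of $G_{\max}$: the step $m(n)\le m(n-1)+\binom{d(v)}{2}3^{(n-d(v)-1)/3}$ relies on $f(G_{\max}\setminus\{v\})\le m(n-1)$. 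For an arbitrary $F$ with only $f(F)\ge\alpha\cdot m(n)$ there is no reason $\Delta(F)$ should be bounded, so a separate preprocessing step is needed. The paper supplies this as Lemma~\ref{stabdeg}: since $\binom{d}{2}3^{-d/3}$ is decreasing for large $d$, iteratively deleting vertices of degree exceeding some $\Delta^*(\alpha)$ costs at most a geometric series of induced cycles summing to less than $\tfrac{\alpha}{2}m(n)$, and if there were more than $C(\alpha)$ such vertices the remaining graph would be too small to carry $\tfrac{\alpha}{2}m(n)$ cycles. Once this degree-reduction step is inserted at the start, your plan goes through; in fact your Steps~2 and~3 collapse, since Lemma~\ref{structure} already forces every typical vertex into a cluster of size exactly~$3$, so only the boundedly many atypical vertices need their edges modified to produce a cyclic braid with the cluster sizes of $H_n$.
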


The arguments used to prove Theorem \ref{main} can be adapted to give results about other sets of induced cycles,
for instance induced cycles of given parity.
We say that a path or cycle is \emph{odd} if it contains an odd number of vertices (\emph{even} if 
it contains an even number). 

Let $m_o(n)$ be the maximum number of induced odd cycles that can 
be contained in a graph on $n$ vertices.  The value of $m_o(n)$ and the structure of the extremal graphs
depend on the value of $n$ modulo 6.

Define $G_n$ to be the full cyclic braid on $n$ vertices whose clusters all have size 3 except for:
\begin{itemize}
\item three consecutive clusters of size 2, when $n \equiv 0$ modulo 6;
\item two adjacent clusters of size 2, when $n \equiv 1$ modulo 6; 
\item one cluster of size 2, when $n \equiv 2$ modulo 6;
\item one cluster of size 4, when $n \equiv 4$ modulo 6;
\item two adjacent clusters of size 4, when $n \equiv 5$ modulo 6.
\end{itemize} 

We will prove the following.
 
\begin{thm}\label{oddcycle}
There exists $n_0$ such that, for all $n \ge n_0$, $G_n$ is the unique $n$-vertex graph containing $m_o(n)$ induced odd cycles.
\end{thm}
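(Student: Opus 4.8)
The plan is to follow the proof of Theorem~\ref{main}, keeping track of the parity of every cycle, and then to redo the resulting optimisation over cyclic braids with ``induced odd cycle'' in place of ``induced cycle''. Since an induced odd cycle is in particular an induced cycle, every $n$-vertex graph has at most $m(n)=\Theta(3^{n/3})$ induced odd cycles; and $G_n$ has $\Theta(3^{n/3})$ induced odd cycles, so $m_o(n)=\Theta(3^{n/3})$ and a graph with close to $m_o(n)$ induced odd cycles has a positive proportion of $m(n)$ induced cycles. Running the stability machinery behind Theorem~\ref{stab} with odd induced cycles throughout shows that any such graph is within boundedly many vertex modifications of a cyclic braid all of whose clusters have bounded size; and, exactly as in the proof of Theorem~\ref{main}, one promotes this to the statement that, for $n$ large, any graph with $m_o(n)$ induced odd cycles is itself a cyclic braid $B=(B_1,\dots,B_k)$ whose clusters have bounded size.

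Next I would count induced odd cycles in a cyclic braid $B=(B_1,\dots,B_k)$ with bounded clusters and $k$ large. In any cyclic braid consecutive clusters are completely joined while non-consecutive clusters are non-adjacent, so an induced cycle using at most one vertex per cluster must wind exactly once around the braid; there are exactly $\prod_i|B_i|$ of these and each has length $k$. Every other induced cycle is confined to the union of boundedly many consecutive clusters, hence has bounded length, so there are only $O(n)$ of them; when $B$ is full these are precisely the triangles, of which there are $\sum_i\binom{|B_i|}{3}+\sum_i\binom{|B_i|}{2}\bigl(|B_{i-1}|+|B_{i+1}|\bigr)$. Consequently the number of induced odd cycles of $B$ equals
\[
\mathbf 1[k\text{ odd}]\cdot\prod_{i=1}^{k}|B_i| + O(n),
\]
and a short argument shows that, for a fixed multiset of cluster sizes in a fixed cyclic order, the $O(n)$ term is maximised (among the odd non-winding cycles) exactly when $B$ is full.

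It remains to optimise. An even value of $k$ contributes only $O(n)\ll m_o(n)$ induced odd cycles, so an extremal braid has $k$ odd, and then $\prod_i|B_i|$ must be maximised over all ways of writing $n$ as a sum of an odd number of bounded positive integers. The familiar argument --- parts equal to $3$ are best, after which one makes the smallest correction forced by the total and by the parity of the number of parts --- produces exactly the multiset of cluster sizes in the definition of $G_n$ for each residue of $n$ modulo $6$ (for instance, when $n\equiv0\pmod 6$ the all-$3$'s braid has an even number of clusters, and replacing two clusters of size $3$ by three clusters of size $2$ is the cheapest parity-fixing move), and this maximiser is unique. Among full cyclic braids realising these sizes, maximising the displayed $O(n)$ term is a finite optimisation over the cyclic orderings of the clusters, and a direct computation shows it is uniquely attained by the arrangement in the definition of $G_n$, with the small or large clusters placed consecutively as appropriate. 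Putting these together shows that for $n$ large $G_n$ is the unique $n$-vertex graph with $m_o(n)$ induced odd cycles.

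The main obstacle is the reduction in the first paragraph: promoting ``close to a cyclic braid'' to ``equal to a full cyclic braid whose length has the right parity''. This is where the argument genuinely diverges from that of Theorem~\ref{main}, since for odd cycles triangles are an asset rather than a liability: one must show that it is never profitable to delete edges inside a cluster (which would destroy triangles) nor to add a few edges creating short odd cycles at the expense of the exponentially many winding holes, and one must eliminate cyclic braids whose length has the wrong parity even though they are locally indistinguishable from the extremal graph.
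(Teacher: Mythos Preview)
Your overall plan is the paper's plan: use the stability machinery to get close to a cyclic braid, promote to an exact cyclic braid, then finish by a finite optimisation over cluster sizes, fullness, and positions. But there is a genuine gap at the promotion step, and your final paragraph misidentifies where the difficulty lies.

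You write that one promotes ``close to a cyclic braid'' to ``is itself a cyclic braid'' \emph{exactly as in the proof of Theorem~\ref{main}}. That promotion (Lemma~\ref{cbraid}) works by contracting the long $3$-braid down to a bounded-size graph $H'$ and then invoking Theorem~\ref{thm:path}: from $f(G)=m(n)$ one deduces $p_2(H';x,y)\ge f_2(|V(H')|)$, which forces $H'\in\mathcal{F}_{|V(H')|}$. For odd cycles this chain breaks. Starting from $f_o(G)=m_o(n)$ and contracting, you only control $p_2^o(H';x,y)$, and the inequality $p_2(H';x,y)\ge p_2^o(H';x,y)$ does not recover $p_2(H';x,y)\ge f_2(|V(H')|)$: for $n\equiv 0\pmod 6$, say, the leading term of $m_o(n)$ is $8\cdot 3^{(n-6)/3}$ while that of $m(n)$ is $9\cdot 3^{(n-6)/3}$, so after contraction you fall short of $f_2$ by a constant factor and Theorem~\ref{thm:path} does not apply.

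The paper's remedy is to prove and use a parity-aware path theorem, Theorem~\ref{oddpath}, which characterises the graphs maximising the number of \emph{odd} induced $x$--$y$ paths. One also contracts clusters in \emph{pairs} (so that unwinding preserves the parity of the eventual cycle), obtains $p_2^o(H';x,y)\ge f_2^o(|V(H')|)$, and Theorem~\ref{oddpath} then forces $H'\in\mathcal{F}^o_{|V(H')|}$. Note that this already fixes both the parity of $k$ and the multiset of cluster sizes, so your by-hand maximisation of $\prod_i|B_i|$ subject to $k$ odd, while correct, is subsumed. What remains---fullness and the consecutive placement of the exceptional clusters (and, for $n\equiv 5\pmod 6$, breaking the tie between two $4$'s and four $2$'s via the triangle count)---is the routine finite check you describe.

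In short: the obstacle is not optimising over cyclic braids, it is proving that $G$ is a cyclic braid at all, and for that you need the odd-path analogue of Theorem~\ref{thm:path}. Without it the promotion step does not go through.
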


 This resolves the conjecture of Chv\'{a}tal and  Tuza (see \cite{tuz2}, \cite{tuz} and \cite{tuz3}), showing that $m_o(n)$ is within an $O(n)$ additive term of the conjectured $3^{n/3}$ when $n \equiv 3$ modulo 6 and within a constant factor when $n \not\equiv 3$ modulo 6. 
 
 If we consider odd holes (induced odd cycles of length at least 5), we get the same bound but a larger family of extremal graphs. 
 Let $m_o^h(n)$ be the 
 maximum number of odd holes that can be contained in a graph on $n$ vertices. Define 
$\mathcal{G}_n$ to be the family of cyclic braids on $n$ vertices whose cluster 
sizes are the same as the cluster sizes in $G_n$, but with no restrictions 
on which clusters are adjacent or consecutive, or on which edges are present inside the clusters; in addition, when $n \equiv 
5$ modulo 6, we also include the cyclic braids whose clusters all 
have size 3 except for four clusters of size 2. 

A modification of the proof of Theorem \ref{oddcycle} gives the following.

\begin{thm}\label{oddholes}
 There exists $n_0$ such that, for all $n \ge n_0$, the family of $n$-vertex graphs that contain $m_o^h(n)$ odd holes is $\mathcal{G}_n$.
\end{thm}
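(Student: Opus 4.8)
The plan is to reduce Theorem \ref{oddholes} to Theorem \ref{oddcycle} by re-running the same argument while paying attention to the one place where holes differ from cycles: triangles are excluded. Since Theorem \ref{oddcycle} already identifies $G_n$ as the unique graph maximising the number of induced odd cycles, the first step is to observe that for $n$ large, $m_o^h(n) \le m_o(n)$ trivially, and that the only odd cycles of $G_n$ that are not odd holes are its triangles. So I would first count the triangles of $G_n$: in a full cyclic braid with cluster sizes $b_1,\ldots,b_k$, every triangle lies either inside three consecutive clusters or inside two adjacent clusters or inside one cluster, and the count is a fixed polynomial in the $b_i$'s of total degree $3$ — hence $\Theta(n)$, i.e. negligible against the $\Theta(3^{n/3})$ main term. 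Thus $m_o^h(n) = m_o(n) - t(G_n) + o(\ldots)$ is \emph{not} quite right, because once we forbid triangles we are also free to optimise the lower-order (linear) term differently; the real statement is that $m_o^h(n)$ equals the maximum of ``(number of odd holes)'' over all graphs, and the extremal graphs are exactly those cyclic braids with the $G_n$ cluster sizes that happen to contain no triangles contributing — but every triangle is excluded automatically since holes have length $\ge 5$ — wait, that is the point: for a cyclic braid the odd holes depend only on the cluster sizes and (to first order) not on the internal edges or the adjacency pattern, which is why the extremal family blows up to all of $\mathcal{G}_n$.

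Concretely, here is the structure I would follow. Step 1: establish the upper bound $m_o^h(n) \le m_o(n_0\text{-term})$ by showing that any graph $F$ maximising odd holes has, by the stability machinery behind Theorem \ref{oddcycle} (the analogue of Theorem \ref{stab} for odd cycles), the property that after deleting $O(1)$ vertices it is a cyclic braid; then argue the cluster sizes must be those of $G_n$ because the exponential main term in the number of odd holes of an empty/full cyclic braid with clusters $b_1,\ldots,b_k$ is governed by the same product-type quantity $\prod b_i$ (or a parity-restricted version thereof) that governs odd cycles, so the same size-optimisation applies. Step 2: show that within the class of cyclic braids with the $G_n$ cluster sizes, the exact number of odd holes is \emph{independent} of which clusters are adjacent, which triples are consecutive, and which internal edges are present — because every cycle that uses an internal edge, or that ``cuts through'' using a non-complete adjacency, either fails to be induced or has length $< 5$ or is paired off bijectively; here one must carefully set up the bijections so the counts match exactly, not just asymptotically. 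Step 3: handle the special $n \equiv 5 \bmod 6$ case, where $\mathcal{G}_n$ additionally contains the braids with four size-$2$ clusters (rather than two size-$4$ clusters); verify by direct computation that ``two size-$4$ clusters'' and ``four size-$2$ clusters'' (each replacing six size-$3$ clusters' worth... actually replacing the appropriate number of vertices) give exactly the same odd-hole count, whereas for odd \emph{cycles} the triangle contributions break the tie in favour of $G_n$ — this is the cleanest illustration of why the families differ.

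I expect the main obstacle to be Step 2: proving that the number of odd holes is \emph{exactly} (not just asymptotically) the same for every member of $\mathcal{G}_n$, and that no graph outside $\mathcal{G}_n$ can match it. The difficulty is that the number of odd holes in a cyclic braid is a sum over ways of choosing, for each cluster, whether the hole ``skips'' it, ``passes through'' it (using one vertex), or ``doubles back'' — and making the adjacency pattern non-complete, or adding an internal edge, destroys some holes while the requirement length $\ge 5$ (rather than $\ge 3$) excludes exactly the configurations that would otherwise distinguish the cases. Pinning down this cancellation rigorously, with all parity bookkeeping and all the small-length exceptional cases ($C_5$, $C_7$) checked, is where the work lies; I would isolate it as a lemma computing, for a cyclic braid $(B_1,\ldots,B_k)$, the exact number of odd holes as an explicit function of $(|B_1|,\ldots,|B_k|)$ alone, valid whenever $k$ is large, and then everything else follows by comparing this function's maximum to the cluster-size vectors arising from $G_n$ and its $n\equiv 5$ variant.
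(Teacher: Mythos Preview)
Your approach is essentially the paper's: re-run the Theorem~\ref{oddcycle} argument to conclude that any extremal graph is a cyclic braid with the listed cluster-size multiset, then argue that within this class the odd-hole count is constant. The obstacle you anticipate in Step~2 dissolves with a single observation (which the paper makes in one line and which you nearly make yourself): in a cyclic braid on $k\ge 5$ clusters, every induced cycle either uses exactly one vertex from each cluster---so there are precisely $\prod_i |B_i|$ of them, manifestly independent of cluster ordering and internal edges---or has length at most $4$; hence the odd-hole count is \emph{exactly} $\prod_i|B_i|$ (not merely ``to first order''), no bijections or $C_5$/$C_7$ casework are needed, and Step~3 is just the identity $4\cdot 4=2^4$.
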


Let $m_e(n)$ be the maximum number of induced even cycles that can be 
contained in a graph on $n$ vertices, and define $E_n$ to be the empty cyclic 
braid on $n$ vertices whose clusters all have size 3 except for:
\begin{itemize}
\item one cluster of size 4, when $n \equiv 1$ modulo 6;
\item two adjacent clusters of size 4, when $n \equiv 2$ modulo 6;
\item three consecutive clusters of size 2, when $n \equiv 3$ modulo 6;
\item two adjacent clusters of size 2, when $n \equiv 4$ modulo 6; and
\item one cluster of size 2, when $n \equiv 5$ modulo 6.
\end{itemize} 

The proof of Theorem \ref{oddcycle}, adapted to consider even rather than odd induced cycles, gives the following. 

\begin{thm}\label{even}
There exists $n_0$ such that, for all $n \ge n_0$, $E_n$ is the unique $n$-vertex graph containing $m_e(n)$ induced even cycles. 
\end{thm}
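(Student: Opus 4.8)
The plan is to run the proof of Theorem~\ref{oddcycle} essentially verbatim, replacing ``odd'' by ``even'' throughout, substituting the construction $E_n$ for $G_n$, and redoing the relevant modular arithmetic; the only genuinely new ingredients are the re-derivation of the extremal construction and the parity bookkeeping inside the counting lemmas. The argument splits into three parts: a lower bound (checking that $E_n$ contains the asserted number of induced even cycles, which also pins down $m_e(n)$); a stability statement (the even-cycle analogue of Theorem~\ref{stab}: any $n$-vertex graph with at least $\alpha\,m_e(n)$ induced even cycles can be turned into a cyclic braid whose cluster sizes are those of $E_n$ by adding and deleting edges at $O_\alpha(1)$ vertices); and an exact local optimisation (sharpening stability so as to identify the maximiser as exactly $E_n$).

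For the lower bound one first records a structural description of the induced cycles of an empty cyclic braid $B=(B_1,\dots,B_k)$ with $k$ large. Reading off the clusters met by an induced cycle produces a closed walk in the cycle $C_k$ all of whose steps move by $\pm 1$ and in which no two walk-non-consecutive entries are adjacent in $C_k$; pursuing the consequences of the induced condition shows the only possibilities are (a) cycles confined to at most three consecutive clusters, which are then necessarily $4$-cycles since the induced subgraph on two or three consecutive clusters is complete bipartite and hence has no induced cycle of length exceeding $4$, and there are $\Theta(n)$ of these; and (b) ``transversal'' cycles using exactly one vertex from each cluster, which have length exactly $k$, and there are $\prod_i|B_i|$ of these. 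In particular $B$ contains a family of $\Theta(3^{n/3})$ induced even cycles only when $k$ is even; and subject to $k$ even, to $\sum_i|B_i|=n$, and to $\prod_i|B_i|$ being as large as possible, a short computation — using that $a\mapsto a^{1/a}$ is maximised over the positive integers at $a=3$ — shows that the optimal cluster-size multiset is exactly that of $E_n$ (for instance, replacing two size-$3$ clusters by three size-$2$ clusters flips the parity of $k$ at a multiplicative cost of only $8/9$, the cheapest available parity fix), and a finer comparison of the $\Theta(n)$ term — which counts $4$-cycles and so rewards bunching the size-$4$ clusters together and the size-$2$ clusters together — fixes the positions of the exceptional clusters, producing the cases of $E_n$ indexed by the residue of $n$ modulo $6$. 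It is worth noting why $E_n$ is empty whereas $G_n$ is full: a full cyclic braid of large length contains many triangles and short odd cycles but no induced $4$-cycle (an edge inside a cluster would become a chord), so emptiness is precisely what makes induced even cycles abundant, mirroring the corresponding point for Theorem~\ref{oddcycle}.

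For the upper bound and uniqueness one proves the even-cycle stability statement by rerunning the proof of Theorem~\ref{stab} with parity carried along as an extra parameter; since even cycles form only a constant fraction of all induced cycles, Theorem~\ref{stab} cannot simply be quoted, but its underlying counting lemmas adapt. Given stability, take an $n$-vertex graph $G$ with $m_e(n)$ induced even cycles, locate its cyclic-braid skeleton, and argue in turn that: (i) the $O(1)$ edited vertices cannot be exploited — any such graph that is not already a cyclic braid with $E_n$'s cluster sizes has strictly fewer induced even cycles than $E_n$; (ii) among cyclic braids the cluster-size multiset must be that of $E_n$, because any competitor with the same vertex count either has a strictly smaller leading term $\prod_i|B_i|$, or ties the leading term but loses in the $\Theta(n)$ term, or has $k$ odd and hence only $\Theta(n)$ induced even cycles in total; (iii) among cyclic braids with $E_n$'s cluster sizes, the braid must be empty and the exceptional clusters must be positioned exactly as prescribed, again by the $\Theta(n)$ comparison; and (iv) the graph so pinned down is unique up to isomorphism, namely $E_n$.

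The main obstacle is step (iii) together with the exact evaluation of the $\Theta(n)$ (and $\Theta(1)$) terms: the leading-order analysis is a clean convexity computation, but extracting the lower-order part requires a meticulous census of the induced $4$-cycles — those spanning two consecutive clusters and those ``centred'' in three consecutive clusters — with the contributions involving the size-$2$ and size-$4$ clusters, and the difference between those clusters being merely consecutive versus adjacent, computed exactly. This is genuinely more delicate than in the odd case, where the dominant short cycles are triangles rather than $4$-cycles and their dependence on cluster sizes and adjacencies is of a different shape, so the whole bookkeeping must be redone rather than transcribed. A secondary difficulty is the stability transfer already mentioned: the even-cycle count behaves less transparently than the total cycle count under the operations used in the proof of Theorem~\ref{stab}, so the relevant estimates must be re-derived with parity tracked throughout.
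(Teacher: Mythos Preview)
Your outline is essentially the paper's approach: follow the proof of Theorem~\ref{oddcycle} with ``even'' in place of ``odd'' and $E_n$ in place of $G_n$, then finish with a $C_4$ census. Two points deserve correction, however.

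First, you claim Theorem~\ref{stab} cannot be quoted directly. In fact the paper \emph{does} quote it directly in the proof of Theorem~\ref{oddcycle} (and hence of Theorem~\ref{even}): a graph with $m_e(n)=\Omega(3^{n/3})$ induced even cycles has at least that many induced cycles in total, so Theorem~\ref{stab} applies as stated and yields a long $3$-braid. Parity plays no role in the exploration-tree/typical-vertex machinery underlying Theorem~\ref{stab}, so ``rerunning its proof with parity carried along'' would not by itself produce the cluster sizes of $E_n$ rather than of $H_n$.

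Second, and relatedly, you omit the ingredient that actually injects parity into the structural argument: Theorem~\ref{evenpath} (the even-path analogue of Theorem~\ref{thm:path}). In the paper, once the long even-length $3$-braid is in hand, one runs the analogue of Lemma~\ref{cbraid}, deleting \emph{pairs} of adjacent supercentral clusters so as to preserve parity, reducing to a bounded-size graph $H'$ in which $p_2^e(H';x,y)$ attains $f_2^e(|V(H')|)$; Theorem~\ref{evenpath} then forces $H'\in\mathcal{F}^e_{|V(H')|}$, and reversing the deletions shows $G$ is a cyclic braid with the cluster multiset of $E_n$. Your step (i), ``the $O(1)$ edited vertices cannot be exploited,'' is exactly this argument, and it needs Theorem~\ref{evenpath}; without naming it, your plan has a gap at precisely the point where the $E_n$ cluster sizes are pinned down. (A minor slip: the subgraph on three consecutive clusters of an empty cyclic braid is not complete bipartite; your conclusion that the only induced cycles there are $C_4$'s is nonetheless correct.)
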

As in the case of $m_o(n)$, we have $m_e(n) = \Theta(3^{n/3})$.

The paper is structured as follows. In Section \ref{section:path} we prove a preliminary result (Theorem~\ref{thm:path}) determining the structure of the $n$-vertex graphs that maximise the number of induced paths between a particular pair of vertices. During the proof of this result we will introduce several of the key ideas needed later. The proof of the main theorem (Theorem~\ref{main}, maximising the number of induced cycles) is given in Section \ref{section:cycle}. The proof uses Theorem \ref{thm:path} from Section \ref{section:path}, but otherwise is entirely contained in Section~\ref{section:cycle}. A number of lemmas proved in Section \ref{section:cycle} are proved in more generality than is strictly needed. This is because the more general versions will be used in Section \ref{sectIon: stab}, where we prove Theorem \ref{stab}. The proofs of Theorem \ref{oddcycle}, Theorem \ref{oddholes} and Theorem \ref{even} are given in Section \ref{section:odd}. However, all the key ideas for the proofs are the same as those in the proof of Theorem \ref{main}. Finally, in Section \ref{section:end} we conclude by discussing some open questions. 

\section{Induced paths between a pair of vertices}\label{section:path}

Let $G$ be a finite graph and let $x$ and $y$ be distinct vertices in $V(G)$. Define $p_2(G;x,y)$ to be the number of induced paths in $G$ beginning at $x$ and ending at $y$. Also define:
$$p_2(G) := \max \{p_2(G;x,y): x,y \in V(G)\},$$ and $$p_2(n):= \max\{p_2(G): |V(G)| = n\}.$$
We use the notation $p_2(\cdot)$, as it indicates that we are counting the maximum number of induced paths between \emph{two} fixed vertices. 

Our first goal in this section is to determine the structure of the $n$-vertex graphs that contain $p_2(n)$ induced paths between some pair of vertices. We show that these extremal graphs have a particular structure that depends on the value of $n$ modulo 3. We then prove analogous results for odd and even length paths. The strategy and notation used in this section for paths are later developed for induced cycles in Section \ref{section:cycle}.

Let $F$ be a graph and let $B_1, \ldots, B_k$ be disjoint subsets of $V(F)$. A subgraph $B \subseteq F$ is a \emph{braid} in $F$ if there exists $k \ge2$ and a partition $B_1,\ldots,B_k$ of $V(B)$ such that for each $2\le i \le k-1$ and for every $x \in B_i$ we have $B_{i-1} \cup B_{i+1} \subseteq N_{F}(x) \subseteq B_{i-1} \cup B_i \cup B_{i+1}$. If $V(F) = \bigcup_{i=1}^k B_i $, we say that $F$ is a \emph{braid}. For such a partition, the notation $B = (B_1,\ldots,B_k)$ is used. The sets $B_i$ are called \emph{clusters}. If $i \in \{1,k\}$ we say $B_i$ is an \emph{end cluster}; otherwise we say $B_i$ is a \emph{central cluster}.  The \emph{length} of a braid is the number of clusters it contains.

Let $n \ge 4$. Define $\mathcal{F}_n$ to be the set of all braids $B$ with the following properties: 
\begin{itemize}
\item $|V(B)| = n$.
\item $B$ has end clusters of size one.
\item  If $n \equiv 0$ modulo 3, then either one central cluster has size 4 and the rest have size 3, or two have size two and the rest have size 3.
\item If $n \equiv 1$ modulo 3, then one central cluster has size 2 and the rest have size 3.
\item If $n \equiv 2$ modulo 3, all clusters have size 3. 
\end{itemize}
Observe that there are no conditions on whether clusters in the braid contain edges. See Figure \ref{pathex} for an example of a braid in $\mathcal{F}_{10}$.
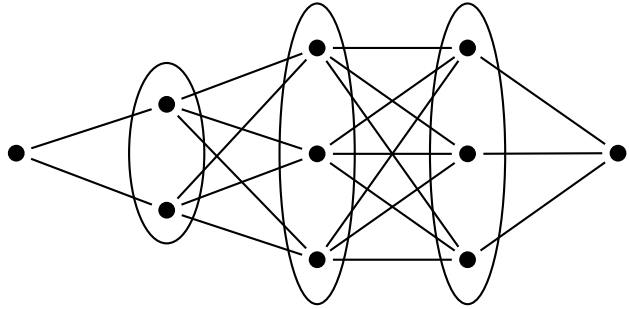
\begin{figure}[h]
\centering 

\begin{tikzpicture}[thick,
  every node/.style={draw,circle}, 
  fsnode/.style={fill=black,scale=0.5},
  ssnode/.style={fill=black,scale=0.5},
  wsnode/.style={fill=black,scale=0.5},
  rsnode/.style={fill=black,scale=0.5},
  every fit/.style={ellipse,draw,inner sep=45pt,text width=0.1cm},
  ->,shorten >= 3pt,shorten <= 3pt
]

\node[draw=none,fill=none] at (0,-1.4) (y) {};

\begin{scope}[yshift = -.75cm, start chain=going below,node distance=12mm]
\foreach \i in {1,2}
  \node[fsnode,on chain] (f\i){};
\end{scope}

\begin{scope}[xshift=2cm,yshift=-0cm,start chain=going below,node distance=12mm]
\foreach \i in {1,2,...,3}
  \node[ssnode,on chain] (s\i)  {};
\end{scope}

\begin{scope}[xshift=4cm,yshift=-0cm,start chain=going below,node distance=12mm]
\foreach \i in {1,2,...,3}
  \node[wsnode,on chain] (w\i)  {};
\end{scope}

\begin{scope}[xshift=6cm,yshift=-14mm,start chain=going below,node distance=12mm]
\foreach \i in {1}
  \node[rsnode,on chain] (r\i)  {};
\end{scope}

\begin{scope}[xshift = -2cm, yshift=-14mm,start chain=going below,node distance=12mm]
\foreach \i in {1}
  \node[rsnode,on chain] (p\i)  {};
\end{scope}

\draw (p1) edge[-] (f1)(p1) edge[-] (f2) (f1) edge[-] (s1) (f2) edge[-] (s2)  (f1) edge[-] (s2) (f1) edge[-] (s3) (f2) edge[-] (s1) (f2) edge[-] (s3) 
(w1) edge[-] (s1) (w2) edge[-] (s2) (w3) edge[-] (s3) (w1) edge[-] (s2) (w1) edge[-] (s3) (w2) edge[-] (s1) (w2) edge[-] (s3) (w3) edge[-] (s1) (w3) edge[-] (s2)
(w1) edge[-] (r1) (w2) edge[-] (r1) (w3) edge[-] (r1)
;

\draw (y) ellipse (0.5cm and 1.2cm);
\draw (s2) ellipse (0.5cm and 2cm);
\draw (w2) ellipse (0.5cm and 2cm);

\end{tikzpicture}
\caption{An example of a braid in $\mathcal{F}_{10}$.}
\label{pathex}
\end{figure}

Let the end clusters be $\{x\}$ and $\{y\}$. Every graph in $\mathcal{F}_n$ contains the same number of induced paths between $x$ and $y$ and so we define 

$$f_2(n) = \left\{ \begin{array}{c l}      
    3^{(n-2)/3}  & \text{ for } n \equiv 2 \text{ modulo 3}\\
    4 \cdot 3^{(n-6)/3}  & \text{ for } n \equiv 0 \text{ modulo 3} \\
    2 \cdot 3^{(n-4)/3} & \text{ for } n \equiv 1 \text{ modulo 3}\\
\end{array}\right.$$
and observe that $p_2(F) = p_2(F;x,y) = f_2(n)$ for all $F \in \mathcal{F}_n$. 

In this section we prove the following.

\begin{thm}\label{thm:path}
Let $G$ be a finite graph on $n \ge 4$ vertices and let $x$ and $y$ be distinct vertices of $G$. Suppose that $G$ contains $p_2(n)$ induced paths between $x$ and $y$.
Then $G$ is isomorphic to a graph in $\mathcal{F}_n$ with end clusters $\{x\}$ and $\{y\}$.
\end{thm}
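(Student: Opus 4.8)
The plan is to prove Theorem~\ref{thm:path} by induction on $n$, treating $n\in\{4,5,6\}$ directly and assuming for $n\ge 7$ that the statement holds for all $4\le m<n$ (so in particular $p_2(m)=f_2(m)$, and $p_2$ is strictly increasing on $[4,n-1]$ since $f_2$ is). Let $G$ be extremal, with $p_2(G;x,y)=p_2(n)\ge f_2(n)$ (the inequality since any $F\in\mathcal F_n$ has $p_2(F)=f_2(n)$). First I would make two easy reductions. If $xy\in E(G)$ then the only induced $x$--$y$ path is the edge $xy$, so $p_2(G;x,y)=1<f_2(n)$; hence $x\not\sim y$, and every induced $x$--$y$ path has length at least $2$. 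If some vertex $z$ lies on no induced $x$--$y$ path, then $p_2(G-z;x,y)=p_2(G;x,y)=p_2(n)$, contradicting $p_2(n)>p_2(n-1)$ and $|V(G-z)|=n-1$; hence every vertex of $G$ (in particular $x$ and $y$) lies on an induced $x$--$y$ path.

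The engine of the argument is a recursion on $N_G(x)$. Set $A=N_G(x)$, $d=|A|$; since $x\not\sim y$, every induced $x$--$y$ path has the form $x\,v\,w_1\cdots w_t\,y$ with $v\in A$ and $w_1,\dots,w_t\notin A$, and conversely prepending $x$ to any induced $v$--$y$ path in $G_v:=G-x-(A\setminus\{v\})$ gives such a path; hence
$$p_2(G;x,y)=\sum_{v\in A}p_2(G_v;v,y),\qquad |V(G_v)|=n-d .$$
If $n-d\le 3$ then each term is at most $1$ (a graph on at most three vertices has at most one induced path between a fixed pair), so $p_2(G;x,y)\le d\le n-2<f_2(n)$ for $n\ge 7$, impossible; thus $n-d\ge 4$ and, by induction, $p_2(G_v;v,y)\le f_2(n-d)$, so $f_2(n)\le d\cdot f_2(n-d)$. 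Writing $f_2(m)=c_{m\bmod 3}\,3^{m/3}$, the constants $c_0=\tfrac49$, $c_1=2\cdot 3^{-4/3}$, $c_2=3^{-2/3}$ lie within a factor $1.1$ of one another, so this inequality gives $3^{d/3}\le 1.1\,d$, forcing $d\le 4$; a finite check of the exact values then yields $d=3$ when $n\equiv 2\pmod 3$, $d\in\{2,3\}$ when $n\equiv 1\pmod 3$, and $d\in\{2,3,4\}$ when $n\equiv 0\pmod 3$, with $d\cdot f_2(n-d)=f_2(n)$ in every admissible case. Equality throughout then forces $p_2(G_v;v,y)=f_2(n-d)=p_2(n-d)$ for every $v\in A$, so by induction each $G_v$ carries a braid decomposition in $\mathcal F_{n-d}$ with end clusters $\{v\}$ and $\{y\}$.

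It remains to glue the $d$ braids $G_v$ into a single braid on $G$. The lemma I would use is: if a graph on at least two vertices admits a braid decomposition with $\{y\}$ as an end cluster, then that decomposition is unique --- the cluster meeting $\{y\}$ must equal $N(y)$, and inductively each further cluster is forced, being exactly the set of not-yet-placed vertices adjacent to every vertex of the preceding cluster. Write $W:=V(G)\setminus(\{x\}\cup N_G(x))$, so $V(G_v)=W\cup\{v\}$ and $G[W]$ is common to all $v$. Deleting the end cluster $\{v\}$ from $G_v$ exhibits a braid decomposition of $G[W]$ with $\{y\}$ as an end cluster; by the lemma it is the \emph{same} decomposition $(C,D_3,\dots,D_{\ell-1},\{y\})$ for every $v$, and it also shows that each $v\in A$ is adjacent within $W$ to precisely the cluster $C$. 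Since $x$ is adjacent to all of $A$ and to no vertex of $W$, the sequence $(\{x\},A,C,D_3,\dots,D_{\ell-1},\{y\})$ is a braid decomposition of $G$ with both end clusters of size one; its central clusters are $A$ (of size $d$) together with the central clusters $C,D_3,\dots,D_{\ell-1}$ of a member of $\mathcal F_{n-d}$. A last case check on $n\bmod 3$ and the admissible value of $d$ confirms that these sizes are exactly those required of $\mathcal F_n$, so $G$ is isomorphic to a member of $\mathcal F_n$ with end clusters $\{x\}$ and $\{y\}$, completing the induction.

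The main obstacle I anticipate is the arithmetic bookkeeping in the second paragraph: one must pin down $\deg(x)$ exactly in each residue class (not merely up to a constant), and separately dispose of the low-vertex situations $n-\deg(x)\le 3$ together with the base cases $n\le 6$. The uniqueness-of-decomposition lemma is the conceptual key, as it is what upgrades ``$x$ attached to a braid'' to ``$G$ is a braid'', but its own proof is a short induction peeling clusters off the $y$-end.
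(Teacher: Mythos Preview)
Your inductive approach is correct and genuinely different from the paper's. The paper builds the $x$--$y$-path tree, proves a product bound $L_y(v_0)\le\prod D(v_i)$ along a greedily chosen root-to-leaf path (Lemma~\ref{pathtree}), and in the equality case reconstructs the braid by reverse induction along the \emph{shortest} such path (Claim~\ref{Gbraid}). You instead peel off one layer at $x$, reduce to $d=|N(x)|$ instances of the problem on $n-d$ vertices via the identity $p_2(G;x,y)=\sum_{v\in N(x)}p_2(G_v;v,y)$, pin down $d$ from $f_2(n)\le d\cdot f_2(n-d)$, and glue the $d$ resulting braids using a uniqueness argument.

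Your route is more elementary and entirely self-contained; the paper's route is chosen because the path-tree machinery reappears (as the exploration tree, Definition~\ref{etree}) in the proof of the main cycle theorem, where there is no canonical starting vertex to peel from. So what you gain in simplicity here you would lose when trying to adapt the argument to Theorem~\ref{main}.

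One genuine caution: your uniqueness lemma is false as stated. The paper's braid definition constrains only \emph{central} clusters, so a graph can admit several braid decompositions with $\{y\}$ as an end cluster; in particular the cluster next to $\{y\}$ need not equal $N(y)$, since the \emph{other} end cluster may contain neighbours of $y$. What you actually use, and what does hold, is weaker: in each $G_v\in\mathcal F_{n-d}$ the end clusters are the singletons $\{v\},\{y\}$ with $v\not\sim y$ (forced since $f_2(n-d)\ge 2$), and then the peeling $C^v_{\ell-1}=N_{G_v}(y)$, $C^v_{\ell-j-1}=N_{G_v}(C^v_{\ell-j})\setminus\bigcup_{i\ge\ell-j}C^v_i$ recovers all of $C^v_{\ell-1},\dots,C^v_2$ using only adjacencies inside $W=V(G)\setminus(\{x\}\cup N(x))$. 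Hence these clusters are independent of $v$. State and prove that specific fact directly; do not invoke a general uniqueness lemma for braids.
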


In particular, this gives the following.
\begin{cor}\label{pathcor}
For all $n \ge 4$, we have $p_2(n) = f_2(n)$. 
\end{cor}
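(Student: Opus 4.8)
The plan is to read the corollary off from Theorem \ref{thm:path}, using the observation recorded just before that theorem that every $F \in \mathcal{F}_n$ satisfies $p_2(F) = p_2(F;x,y) = f_2(n)$, where $\{x\}$ and $\{y\}$ are the end clusters of $F$. The corollary then splits into a lower and an upper bound on $p_2(n)$, both of which are short.

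For the lower bound, I would first note that $\mathcal{F}_n \neq \varnothing$ for every $n \ge 4$: the prescribed cluster sizes (two end clusters of size $1$, and central clusters all of size $3$ apart from the single adjustment dictated by $n \bmod 3$) always sum to $n$ and can be realised, for instance by the braid in which consecutive clusters are completely joined and there are no edges inside clusters --- this is a legitimate member of $\mathcal{F}_n$, since the definition imposes no condition on edges within clusters. Fixing any such $F \in \mathcal{F}_n$ with end clusters $\{x\}$, $\{y\}$ gives $p_2(n) \ge p_2(F) \ge p_2(F;x,y) = f_2(n)$.

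For the matching upper bound, let $G$ be an $n$-vertex graph with $p_2(G) = p_2(n)$ (a maximiser exists, there being only finitely many graphs on $n$ vertices), and pick distinct $x,y \in V(G)$ with $p_2(G;x,y) = p_2(G) = p_2(n)$. Since $n \ge 4$, Theorem \ref{thm:path} applies to $G$, $x$, $y$ and shows that $G$ is isomorphic to some $F \in \mathcal{F}_n$ having end clusters $\{x\}$ and $\{y\}$; hence $p_2(n) = p_2(G;x,y) = p_2(F;x,y) = f_2(n)$. Combining this with the lower bound yields $p_2(n) = f_2(n)$.

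There is no genuine obstacle here: all of the work sits in Theorem \ref{thm:path}, and the corollary is essentially bookkeeping. The only point that deserves an explicit word is the non-emptiness of $\mathcal{F}_n$ for $n \ge 4$, which is needed so that the displayed formula for $f_2(n)$ is not vacuous; this is immediate from the definition of $\mathcal{F}_n$.
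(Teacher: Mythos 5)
Your argument is correct and is exactly how the paper intends the corollary to follow: the upper bound comes from applying Theorem \ref{thm:path} to a maximiser (which must then lie in $\mathcal{F}_n$, where the count is $f_2(n)$), and the lower bound from the non-emptiness of $\mathcal{F}_n$ together with the observation, stated just before the theorem, that $p_2(F;x,y)=f_2(n)$ for every $F\in\mathcal{F}_n$. Nothing further is needed.
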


Before proving Theorem \ref{thm:path}, we introduce some preliminary notation and definitions.

\begin{defn}
\label{treexy}
Define $N[v] := N(v) \cup \{v\}$. Also, for a set $X \subseteq V(G)$, let $N(X) := \bigcup_{x \in X}N(x)$, and $N[X] := \bigcup_{x \in X}N[x]$. Note that $X \subseteq N[X]$, and $X \cap N(X)$ may or may not be empty. For a subgraph $H \subseteq G$, define $N(H) := N(V(H))$ and $N[H]:= N[V(H)]$.
\end{defn}

In order to prove Theorem \ref{thm:path} (counting induced paths), the following definition is used. A similar definition is given in Definition \ref{etree} to prove Theorem \ref{main} (counting induced cycles).  

\begin{defn}
Let $G$ be a finite graph and fix $x,y \in V(G)$, with $y \notin N[x]$. The \emph{x-y-path tree} of $G$ is a tree $T = T(x,y)$ together with a function $t:V(T) \rightarrow V(G)$ defined as follows.
\begin{itemize}
 \item $T$ is a tree with vertex set $V(T)$ disjoint from $V(G)$. 
 \item The vertices of $T$ correspond to induced paths $P := x,x_1\ldots,x_j$ in $G$ such that $y \notin N_G(\{x_1,\ldots,x_{j-2}\})$ and $y \in N(x_{j-1})$ only if $x_j = y$. For every such $P$ define a vertex $w_P$ in $T$, and set $t(w_P) = x_j$. We say that $P$ is the \emph{G-path} of $w_P$.  These vertices are the only vertices in $T$. Define the root of the tree to be $v_0:= w_{x}$.
 \item Given a vertex $w \in V(T)$ with $G$-path $x,x_1\ldots,x_j$ we define $C(w)$, the \emph{children} of $w$, to be the set of vertices in $T$ whose $G$-path is $x,x_1,\ldots, x_j,z$, for some $z \in G$. Define $N_T(v_0) = C(v_0)$. For $w \in V(T)\backslash \{v_0\}$  define $N_T(w) := C(w) \cup \{u\}$ where $u$ is the unique vertex in $T$ with $G$-path $x,x_1\ldots,x_{j-1}$. (So two vertices are adjacent in $T$ precisely when one of their $G$-paths extends the other by one vertex.) 
\end{itemize} 
We write $t(S):= \{t(s):x \in S\}$ for any subset $S \subseteq V(T)$ and $t(H):= G[\{t(x): x \in V(H)\}]$ for any subgraph $H \subseteq T$. 
Given a set $S \subseteq V(T)$, we say that $S$ \emph{sees} a vertex $w \in V(G)$ (or $w$ is \emph{seen} by $S$) if $w \in N_G[t(S)]$. An empty set does not see any vertex. If $w \not\in N_G[t(S)]$, we say that $w$ is \emph{unseen} by $S$ (or that $S$ does not \emph{see} $w$).
\end{defn}
See Figure \ref{treex} for an example of an \emph{x-y-path tree}. We get the following proposition as an easy consequence of Definition \ref{treexy}. 

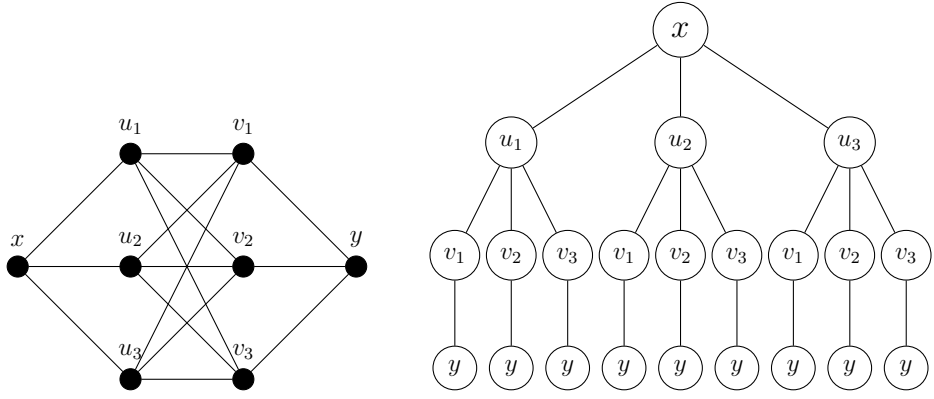
\begin{figure}[htbp]
\centering
\begin{tikzpicture}[node distance=2.5cm,main node/.style={minimum size = .7cm,circle,fill=white!20,draw}, b node/.style={circle,fill=white!20,draw}, scale=1.5, every node/.style={scale=0.8}]

\node[main node] (x)at (0,1) [fill=black, scale=0.5, label={above, label distance=0.1mm: $x$}] {}; 
\node[main node] (u1)at (1,2) [fill=black, scale=0.5, label={above, label distance=0.1mm: $u_1$}] {};
\node[main node] (u2)at (1,1) [fill=black, scale=0.5, label={above, label distance=0.1mm: $u_2$}] {}; 
\node[main node] (u3)at (1,0) [fill=black, scale=0.5, label={above, label distance=0.1mm: $u_3$}] {};
\node[main node] (z1)at (2,2) [fill=black, scale=0.5, label={above, label distance=0.1mm: $v_1$}] {}; 
\node[main node] (z2)at (2,1) [fill=black, scale=0.5, label={above, label distance=0.1mm: $v_2$}] {}; 
\node[main node] (z3)at (2,0) [fill=black, scale=0.5, label={above, label distance=0.1mm: $v_3$}] {}; 
\node[main node] (y)at (3,1) [fill=black, scale=0.5, label={above, label distance=0.1mm: $y$}] {};

\draw (x) edge (u1) edge  (u3) edge (z2)
(u1) edge (z1) edge (z2) edge (z3)
(u2) edge (z1) edge (z3)
(u3) edge (z1) edge (z2) edge (z3)
(y) edge (z1) edge (z2) edge (z3);

\end{tikzpicture}
\hspace{0.5cm}
\vspace{0.5cm}
\begin{tikzpicture}[node distance=2.5cm,main node/.style={minimum size = .7cm,circle,fill=white!20,draw}, b node/.style={circle,fill=white!20,draw}, scale=1.5, every node/.style={scale=0.8}]   

\node[main node] (x)at (1.5,-1.5)[circle,fill=white!5, scale = 1.3] {$x$};
\node[main node] (u1)at (0,-2.5)[circle,fill=white!20]{$u_1$};
\node[main node] (u2)at (1.5,-2.5) {$u_2$};
\node[main node] (u3)at (3,-2.5) {$u_3$};
\node[main node] (v1)at (-0.5,-3.5)[circle,fill=white!20] {$v_1$};
\node[main node] (v2)at (0,-3.5)[circle,fill=white!20] {$v_2$} ;
\node[main node] (v3)at (0.5,-3.5) [circle,fill=white!20] {$v_3$};
\node[main node] (v12)at (1,-3.5)[circle,fill=white!20] {$v_1$};
\node[main node] (v22)at (1.5,-3.5)[circle,fill=white!20] {$v_2$} ;
\node[main node] (v32)at (2,-3.5) [circle,fill=white!20] {$v_3$};
\node[main node] (v13)at (2.5,-3.5)[circle,fill=white!20] {$v_1$};
\node[main node] (v23)at (3,-3.5)[circle,fill=white!20] {$v_2$} ;
\node[main node] (v33)at (3.5,-3.5) [circle,fill=white!20] {$v_3$};
\node[main node] (y1)at (-0.5,-4.5)[circle,fill=white!20] {$y$};
\node[main node] (y2)at (0,-4.5)[circle,fill=white!20] {$y$} ;
\node[main node] (y3)at (0.5,-4.5) [circle,fill=white!20] {$y$};
\node[main node] (y12)at (1,-4.5)[circle,fill=white!20] {$y$};
\node[main node] (y22)at (1.5,-4.5)[circle,fill=white!20] {$y$} ;
\node[main node] (y32)at (2,-4.5) [circle,fill=white!20] {$y$};
\node[main node] (y13)at (2.5,-4.5)[circle,fill=white!20] {$y$};
\node[main node] (y23)at (3,-4.5)[circle,fill=white!20] {$y$} ;
\node[main node] (y33)at (3.5,-4.5) [circle,fill=white!20] {$y$};

\draw (x) edge (u1) edge (u2) edge (u3)
(u1) edge (v1) edge (v2) edge (v3)
(u2) edge (v12) edge (v22) edge (v32)
(u3) edge (v13) edge (v23) edge (v33)
(v1) edge (y1) (v2) edge (y2) (v3) edge (y3)
(v12) edge (y12) (v22) edge (y22) (v32) edge (y32)
(v13) edge (y13) (v23) edge (y23) (v33) edge (y33)
;

\end{tikzpicture}
\caption{A graph and its $x$-$y$-path tree. Each vertex $w$ in the tree is labelled with $t(w)$.}
\label{treex}
\end{figure}

\begin{prop}
\label{disj}
Let $G$ be a graph containing vertices $x$ and $y$ such that $y \notin N[x]$. Let $T$ be the $x$-$y$-path tree of $G$ rooted at $v_0$. Let $P$ be a path in $T$ starting at $v_0$. If $V(P)$ sees a vertex $w \in V(G)$, then there exists a unique $u \in N_T[V(P)]$ such that $t(u) = w$. 
\end{prop}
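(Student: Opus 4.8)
The plan is to start from the observation that a path $P$ in $T$ beginning at the root $v_0$ is necessarily a \emph{descending} path: write $P = w_0, w_1, \ldots, w_\ell$ with $w_0 = v_0$ and each $w_{i+1}$ a child of $w_i$. If $x = q_0, q_1, \ldots, q_\ell$ denotes the $G$-path of the last vertex $w_\ell$, then $q_0, \ldots, q_i$ is the $G$-path of $w_i$, so $t(w_i) = q_i$, and $Q := q_0 q_1 \cdots q_\ell$ is an induced path of $G$; in particular the $q_i$ are pairwise distinct and $t(V(P)) = V(Q)$. I would next pin down $N_T[V(P)] = V(P) \cup \bigcup_{i=0}^{\ell} C(w_i)$ (the $T$-neighbours of $w_i$ are its parent, which lies on $P$, and its children, of which $w_{i+1}$ for $i<\ell$ already lies on $P$), and record that any $v \in C(w_i)$ has a $G$-path $q_0, \ldots, q_i, z$ with $t(v) = z$, where $q_0, \ldots, q_i, z$ is induced --- so $z \notin \{q_0, \ldots, q_i\}$, $z \sim q_i$, and $z$ is non-adjacent to $q_0, \ldots, q_{i-1}$ --- together with the two $y$-side conditions from the definition of $T$.

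For the \emph{uniqueness} half I would prove the stronger statement that $t$ is injective on $N_T[V(P)]$. It is injective on $V(P)$ because the $q_i$ are distinct. If $v \in C(w_i) \setminus V(P)$ with $t(v) = z$, then $z$ differs from $q_0, \ldots, q_i$ by the induced-path condition, and if $z = q_j$ for some $j > i$ then $q_j \sim q_i$ forces $j = i+1$ (since $Q$ is induced), making $v$'s $G$-path equal to $q_0, \ldots, q_{i+1}$, i.e.\ $v = w_{i+1} \in V(P)$, a contradiction; hence $t$ sends $C(w_i) \setminus V(P)$ into $V(G) \setminus V(Q)$. Two distinct children of the same $w_i$ have different $G$-paths and hence different last vertices. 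Finally, if $v \in C(w_i)$ and $v' \in C(w_j)$ with $i < j$ and $t(v) = t(v') = z$, then $z \sim q_i$ by the condition on $v$, whereas $z$ is non-adjacent to $q_0, \ldots, q_{j-1} \ni q_i$ by the induced-path condition on $v'$ --- a contradiction. These cases are exhaustive, so at most one $u \in N_T[V(P)]$ has $t(u) = w$.

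For \emph{existence}, the hypothesis that $V(P)$ sees $w$ says $w \in N_G[V(Q)]$. If $w = q_i$ for some $i$, then $u = w_i$ works. Otherwise $w$ is adjacent to some $q_i$; let $i^{*}$ be the least such index. Then $q_0, \ldots, q_{i^{*}}, w$ is an induced path ($w \notin V(Q)$; $w \sim q_{i^{*}}$; $w \not\sim q_j$ for $j < i^{*}$ by minimality; $q_0, \ldots, q_{i^{*}}$ induced), and it remains to check the two $y$-side conditions so that this sequence is the $G$-path of a vertex $u \in C(w_{i^{*}}) \subseteq N_T[V(P)]$ with $t(u) = w$. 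The condition $y \notin N_G(\{q_1, \ldots, q_{i^{*}-1}\})$ is inherited from the fact that $w_{i^{*}}$ --- or $w_{i^{*}+1}$, when $i^{*} < \ell$ --- belongs to $T$. The remaining condition is that $y \in N_G(q_{i^{*}})$ should imply $w = y$, and this is the step I expect to be the main obstacle: if $y \sim q_{i^{*}}$ but $w \ne y$, the sequence is not a legal $G$-path, and in fact one checks that then \emph{no} vertex of $N_T[V(P)]$ has $t$-value $w$. This degenerate configuration arises precisely when $q_\ell = y$ and $w$ is a neighbour of $q_{\ell-1}$ lying off $Q$, so the existence half really needs this possibility excluded --- e.g.\ because $P$ is taken not to terminate at a vertex whose $t$-value is $y$, or because $w$ is restricted to vertices lying on some induced $x$-$y$ path extending $q_0, \ldots, q_{i^{*}}$ (in which case $y \sim q_{i^{*}}$ does force $w = y$). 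Granting this, the displayed sequence is the $G$-path of the required $u$, and together with the injectivity above it finishes the proof.
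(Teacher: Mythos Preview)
Your uniqueness half is correct and considerably more careful than the paper's own argument, which is a single sentence (``$u$ is a child in $T$ of the first vertex $v$ in $P$ such that $t(v)$ is adjacent to $w$ in $G$'') and does not spell out the case analysis you give.

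More importantly, you are right to flag the existence half: the proposition as stated is not literally true, and the paper's one-line proof overlooks exactly the obstruction you describe. A concrete instance: take $G$ on $\{x,a,b,y\}$ with edges $xa$, $ab$, $ay$; then the $x$-$y$-path tree is the path $v_0,w_1,w_2$ with $t$-values $x,a,y$, and for $P=v_0,w_1$ the vertex $b$ is seen by $V(P)$ but $t^{-1}(b)\cap N_T[V(P)]=\emptyset$. Your diagnosis that the failure comes from the $y$-side clause in the definition is correct, though your ``precisely when $q_\ell=y$'' is slightly too narrow: the same obstruction appears whenever $y\in N_G(q_{i^*})$ with $i^*\in\{\ell-1,\ell\}$, which also covers the case $q_\ell\neq y$ but $y\sim q_\ell$ (as in the example above with $\ell=1$).

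This does not damage the paper. Every invocation of the proposition there uses only the injectivity of $t$ on $N_T[V(P)]$ --- to get $\sum_i D(v_i)\le n-2$ in Lemma~\ref{pathtree} and the disjointness of the $V_i$ in the proof of Theorem~\ref{thm:path} --- and that is precisely what you have established in full. So your analysis is sound; the defect is in the stated proposition, not in your reasoning, and the paper simply glosses over it.
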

\begin{proof}
This follows immediately from the construction of $T$: $u$ is a child in $T$ of the first vertex $v$ in $P$ such that $t(v)$ is adjacent to $w$ in $G$.
\end{proof}

The following terminology will also be used later during the proof of Theorem \ref{main} (maximising induced cycles), as well as in this section for the proof of Theorem \ref{thm:path} (maximising induced paths between a pair of vertices).
\begin{defn}\label{treestuff}
Let $G$ be a graph containing vertices $x$ and $y$ such that $y \notin N[x]$. Let $T$ be the $x$-$y$-path tree of $G$. For any $z \in V(T)$, $z$ is the child of a unique vertex $w$. Define $B(z)$, the \emph{branch rooted at} $z$, to be the component of $T \backslash \{wz\}$ containing $z$. Also, 
define $L(z)$ to be the number of leaves of $T$ that are contained in $B(z)$ and define $L_y(z)$ to be the number of leaves $l$ in $T$ contained in $B(z)$ such that $t(l) = y$. If it is unclear which tree we are considering, we will write $B_T(z)$, $L_T(z)$, etc.
\end{defn}

Observe that for $w \in T$, $t(w)=y$ only if $w$ is a leaf. It directly follows from Definition \ref{treexy} that
\begin{equation}
\label{lxy}
p_2(G;x,y) = L_y(v_0).
\end{equation} 

In order to prove Theorem \ref{thm:path}, we use the following lemma about $x$-$y$-path trees. For $w \in V(T)$, we write $D(w) := |C(w)|$.

\begin{lem}\label{pathtree}
Let $G$ be a graph on $n \ge 4$ vertices. Let $x$ and $y$ be distinct vertices in $V(G)$, with $y \notin N[x]$ and $p_2(G;x,y)>0$. Let $T$ be the $x$-$y$-path tree rooted at $v_0$ and $P:= v_0,\ldots, v_k$ be any path in $T$ where $v_k$ is a leaf. Then:

\begin{itemize}
\item [(i)] $L_y(v_0) \le f_2(n)$. 
\item [(ii)] If $L_y(v_0)=f_2(n)$, then: 
\begin{itemize}
\item [(a)] for any $v_j$ and for all $u,w \in C(v_j)$, we have $L_y(u) = L_y(w)$;
\item [(b)] $V(P) \backslash \{v_k\}$ sees every vertex of $G$ and $t(v_k) = y$.
\end{itemize}
\end{itemize}
\end{lem}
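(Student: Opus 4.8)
The plan is to prove both parts by a single inductive argument on $n$, structured around the root degree $D(v_0) = |C(v_0)|$. Fix $G$, $x$, $y$ as in the statement and let $T$ be the $x$-$y$-path tree. Write $C(v_0) = \{u_1,\dots,u_d\}$ where $d = D(v_0)$, and note $t(u_1),\dots,t(u_d)$ are exactly the neighbours of $x$ in $G$ other than $y$ (none of them is $y$ since $y\notin N[x]$). By \eqref{lxy}, $L_y(v_0) = \sum_{i=1}^d L_y(u_i)$. The key observation is that each branch $B(u_i)$ is itself (isomorphic to) an $x'$-$y$-path tree of the induced subgraph $G_i := G - (N[x]\setminus\{t(u_i)\})$ — we delete $x$ and all of its other neighbours, because no induced path through $x$ then $u_i$ can revisit them — with source $t(u_i)$ and target $y$. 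If $n_i := |V(G_i)|$, then $n_i \le n - d$ (we removed $x$ and $d-1$ other neighbours, possibly more). So by induction $L_y(u_i) \le f_2(n_i) \le f_2(n-d)$, hence
$$
L_y(v_0) \;\le\; d\cdot f_2(n-d).
$$
For part (i) it then suffices to check that $d\cdot f_2(n-d) \le f_2(n)$ for all relevant $d\ge 1$ and $n$, with the base cases (small $n$, or $d$ large so that $n-d$ is below the range where a path to $y$ exists, giving $L_y(u_i)=0$) handled directly. Since $f_2$ grows like $3^{n/3}$, the function $d\mapsto d\cdot f_2(n-d)$ is maximised at $d=3$ (giving $3f_2(n-3) = f_2(n)$, using the recursion $f_2(n)=3f_2(n-3)$ that is immediate from the definition), and is strictly smaller for $d\in\{1,2\}$ and for $d\ge 4$; this is the routine but essential numerical check, and it must be done carefully for each residue of $n$ mod $3$ and for the smallest values of $n$ where $f_2$ is not yet in its "pure power" regime.

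For part (ii), suppose $L_y(v_0) = f_2(n)$. Then every inequality above is tight: we must have $d\cdot f_2(n-d) = f_2(n)$, which by the numerical analysis forces $d = 3$ (for $n$ large; the small cases are checked by hand and also give $d=3$ once $n\ge 4$ and a $y$-leaf exists), and moreover $L_y(u_i) = f_2(n-3) = f_2(n_i)$ for every $i$, which forces $n_i = n - 3$ — i.e. $x$ has exactly $3$ neighbours besides $y$ is false in general; rather, $N[x] = \{x,t(u_1),t(u_2),t(u_3)\}$, so $x$ has exactly the three neighbours $t(u_i)$ and no others, and in particular $y\notin N(x)$ as assumed and $G_i = G - \{x\}\cup(\{t(u_1),t(u_2),t(u_3)\}\setminus\{t(u_i)\})$ has exactly $n-3$ vertices. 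Since $L_y(u_1)=L_y(u_2)=L_y(u_3)=f_2(n-3)$, we get claim (a) at the root. For a general vertex $v_j$ on the path $P$, apply the same reasoning inside the branch $B(v_j)$: that branch is the $t(v_j)$-$y$-path tree of the corresponding induced subgraph $G^{(j)}$ on $n_j'$ vertices, and $L_y(v_j)$ counts its $y$-leaves. The point is that along any root-leaf path, the relevant vertex counts $n_j'$ must decrease by exactly $3$ at each step for the product of the $f_2$-values to stay maximal — any "wasted" deletion (a neighbour of $t(v_j)$ already removed, or an extra non-neighbour removed because it lies in $N[x]$-type closed neighbourhoods) would make some $n_j'$ strictly smaller and, by strict monotonicity of $f_2$, make $L_y(v_0)$ strictly less than $f_2(n)$. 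Running this down $P$ gives that each $C(v_j)$ has size $3$ with all three children having equal $L_y$-value (claim (a) in full), and that the successive vertex sets seen by initial segments of $P$ grow by exactly $3$ new vertices at each step and eventually exhaust $V(G)$; since the only way for $P$ to terminate at a leaf $v_k$ consistent with $L_y(v_k)>0$ being "already counted" is $t(v_k) = y$, we get claim (b).

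I expect the main obstacle to be \textbf{bookkeeping the induced-subgraph reduction precisely}: when we pass from $G$ to the subgraph governing a branch $B(u_i)$, we must remove not just $x$ but all of $N(x)$ other than $t(u_i)$, and argue that the $x'$-$y$-path tree of this subgraph is genuinely isomorphic to $B(u_i)$ (this uses that an induced path $x,t(u_i),\dots$ can never later return to a vertex of $N(x)$, which is exactly the induced condition) — and then, in the equality analysis, to argue that tightness forces these removed sets to have size exactly $3$ at every level, i.e. that no vertex is "seen twice" and no vertex outside the closed neighbourhood is accidentally swept away. The monotonicity and the recursion $f_2(n) = 3 f_2(n-3)$ make the quantitative side clean, so the real work is in making the structural reduction and its tightness rigorous; a secondary nuisance is handling the small-$n$ base cases and the degenerate cases where some branch has no path to $y$ (so $L_y(u_i)=0$), which must be shown not to achieve the maximum.
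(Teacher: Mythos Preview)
Your inductive approach is genuinely different from the paper's and is viable in principle, but there is a concrete error in your equality analysis that you should fix.

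\textbf{The error.} You claim that $d\cdot f_2(n-d)=f_2(n)$ forces $d=3$. This is false. For $n\equiv 0\pmod 3$ one has $2f_2(n-2)=4f_2(n-4)=3f_2(n-3)=f_2(n)$, so $d\in\{2,3,4\}$ are all possible at the root; for $n\equiv 1\pmod 3$ both $d=2$ and $d=3$ give equality; only for $n\equiv 2\pmod 3$ is $d=3$ forced. This is exactly consistent with the cluster sizes allowed in $\mathcal{F}_n$. Consequently your later claim that ``the relevant vertex counts $n_j'$ must decrease by exactly $3$ at each step'' is also wrong. Fortunately the fix is local: equality in $\sum_i L_y(u_i)\le d\cdot f_2(n-d)$ still forces all $L_y(u_i)$ equal (giving (ii)(a) at the root), and since $n_i=n-d$ exactly you still get $L_y(u_i)=f_2(n_i)$, so the induction on each $G_i$ goes through for whichever $d\in\{2,3,4\}$ actually occurs. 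You should rewrite the equality paragraph accordingly and drop the ``exactly $3$ new vertices'' claim. A minor related point: your treatment of large $d$ (``$n-d$ below the range where a path to $y$ exists, giving $L_y(u_i)=0$'') is not quite right either; when $n-d=2$ and $t(u_i)y\in E(G)$ one has $L_y(u_i)=1$, not $0$. This does not affect the equality case (where $d\le 4$), but it matters for the upper bound in (i) at small $n$.

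\textbf{Comparison with the paper.} The paper does not induct on $n$. Instead it picks a single greedy root-to-leaf path (always stepping to a child maximising $L_y$), obtains $L_y(v_0)\le \prod_{i=0}^{k-2} D(v_i)$ with $\sum D(v_i)\le n-2$, and maximises the product under this sum constraint in one shot. For (ii) it then argues that in the equality case every root-to-leaf path is greedy. This avoids the recursion $d\cdot f_2(n-d)\le f_2(n)$ entirely, needs no base cases in $n$, and sidesteps the case $y\in N(t(u_i))$. Your inductive reduction to $G_i=G\setminus(N[x]\setminus\{t(u_i)\})$ is conceptually clean and gives the same conclusions, but at the cost of more bookkeeping: the residue-by-residue check of $d\cdot f_2(n-d)\le f_2(n)$, the base cases $n\le 7$, and the degenerate case where the branch tree is not itself an $x'$-$y$-path tree because $y\in N(t(u_i))$.
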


\begin{proof}
Sequentially choose a path $v_0,v_1,\ldots, v_k \subseteq V(T)$, where $v_k$ is a leaf. At vertex $v_j$ we choose $v_{j+1}$ to be some $z \in C(v_j)$ such that $L_y(z) = \max\{L_y(x): x \in C(v_j)\}$. Let $\mathcal{P}$ be the set of paths that can be obtained in this manner and fix $P:= v_0,\ldots,v_k \in \mathcal{P}$.

We first show that $t(v_k) = y$. Suppose otherwise that $t(v_k) \not= y$. As $p_2(G;x,y)>0$, by construction of $P$ we have $L_y(v_{k-1}) > 0$. If $v_{k-1}$ had a child $u$ with $t(u)=y$, by construction of $T$ this would be the only child of $v_{k-1}$. Thus $v_{k-1}$ has no such child. This fact, along with the fact $L_y(v_{k-1}) > 0 $ implies that $v_{k-1}$ has a child $z$ with $L_y(z) >0$. As $L_y(v_k) = 0$, we have $L_y(z) > L_y(v_k)$, a contradiction. 

For $0 \le i \le k-1$, we have
\begin{equation}\label{ind}
L_y(v_i) = \sum_{z \in C(v_i)} L_y(z) \le D(v_i)\max \{L_y(z): z \in C(v_i)\} = D(v_i)L_y(v_{i+1}).
\end{equation}
By repeatedly applying (\ref{ind}) we get
\begin{equation}\label{leafprod}
L_y(v_0) \le D(v_0) \max \{L_y(z): z \in C(v_0)\} \le \ldots \le L_y(v_{k-1})\prod_{i=0}^{k-2}D(v_i).
\end{equation}
As $t(v_k) = y$, $v_k$ is the only child of $v_{k-1}$ (by construction of $T$) and $L_y(v_{k-1}) = 1$. 
Thus
\begin{equation}\label{leafend}
L_y(v_0) \le \prod_{i=0}^{k-2}D(v_i),
\end{equation}
where $\sum_{i=0}^{k-2} D(v_i) \le n-2$, as $v_0,\ldots, v_{k-2}$ have disjoint sets of children in $G\backslash \{x,y\}$ by Propostion \ref{disj}. 

A quick check shows that the maximal value of $\prod_{i=0}^{k-2}D(v_i)$ subject to $\sum_{i=0}^{k-2} D(v_i) \le n-2$ occurs only in the following cases: 
\begin{itemize}
\item If $n \equiv 2$ modulo 3, we have $D(v_i) = 3$ for all $i$. 
\item If $n \equiv 0$ modulo 3, we have either $D(v_i) = 4$ for exactly one $i$ and $D(v_j) = 3$ for all $j \not= i$; or there are $i_1, i_2$ such that $D(v_i)=2$ for $i = i_1, i_2$, and $D(v_j) = 3$ for all $i \notin \{i_1,i_2\}$. 
\item If $n \equiv 1$ modulo 3, we have $D(v_i) = 2$ for exactly one $i$, and $D(v_j) = 3$ for all $j \not= i$. 
\end{itemize}
Thus we see that the maximal possible value of $\prod_{i=0}^{k-2}D(v_i)$ is $f_2(n)$, and so $L_y(v_0) \le f_2(n)$ as required for (i). 

When $L_y(v_0) = f_2(n)$ we have 
\begin{equation}
\label{pf2}
\prod_{i=0}^{k-2}D(v_i) = f_2(n).
\end{equation}
This is only possible if $\sum_{i=0}^{k-2} D(v_i) = n-2$ and the $D(v_i)$ take the values defined in the above cases. In addition, we have equality in (\ref{leafend}) and hence in (\ref{ind}) for each value of $0 \le i \le k-1$. Therefore, for each $i$ and for all $z,w \in C(v_i)$, we have $L_y(z) = L_y(w)$. 

Suppose there exists a path $X := x_0,\ldots, x_k$, where $x_0 = v_0$ and $x_k$ is a leaf, such that $X \notin \mathcal{P}$. We will derive a contradiction and hence conclude that every such path is in $\mathcal{P}$.

Choose $P':= y_0, \ldots, y_k \in \mathcal{P}$ so that it coincides with $X$ on the longest possible initial segment, i.e.,~so that $i$ is maximal such that $y_0,\ldots, y_i = x_0, \ldots, x_i$. As $X \notin \mathcal{P}$, for some $j$ we have $L_y(x_{j}) \not= L_y(y_{j})$, but $x_i = y_i$ for $i <j$. But by the argument of the previous paragraph, as $P' \in \mathcal{P}$, we have that for each $i$, $L_y(z) = L_y(w)$ for all $z,w \in C(y_i)$. Thus as $x_{j-1} = y_{j-1}$, we have $x_{j} \in C(y_{j-1})$ and $L_y(x_{j}) = L_y(y_{j})$, a contradiction. So $X \in \mathcal{P}$, as required. Again by the argument of the previous paragraph, for any $x_j$ and any $u,w \in C(x_j)$ we have $L_y(u) = L_y(w)$. This concludes part (ii a)

As $X \in \mathcal{P}$, (\ref{pf2}) holds for $X$ (our choice of $P \in \mathcal{P}$ was arbitrary). But then we have $\sum_{j=0}^{k-2} D(x_j) = n-2$ and so $X \backslash \{x_k\}$ sees every vertex of $G$ as required for (ii b). As $X \in \mathcal{P}$, $t(x_k) = y$. This concludes the proof of (ii b).
\end{proof}

We now complete the proof of Theorem \ref{thm:path}.

\begin{proof}[Proof of Theorem \ref{thm:path}]
Let $T := T(x,y)$ be the \emph{x-y}-path tree of $G$ rooted at $v_0$. By (\ref{lxy}), the number of induced paths between $x$ and $y$ is precisely the number of leaves $l \in T$ such that $t(l) = y$. As by Lemma \ref{pathtree}(i) we know $L_y(v_0) \le f_2(n)$ and moreover we know there exist graphs $H$ such that $p_2(H) = f_2(n)$ (just pick $H \in \mathcal{F}_n$), then $L_y(v_0) = f_2(n)$. We will show that $G$ is in $\mathcal{F}_n$. First we will show that $G$ is a braid.

Let $P := v_0,\ldots, v_k$ be the shortest path in $T$ such that $t(v_0) = x$ and $t(v_k) = y$. For $i \in \{0,\ldots,k-1\}$, define $C_{i+1}:= C(v_i)$ to be the set of children of $v_i$ in $T$ (note that $v_{i+1} \in C_{i+1})$. Define $V_0 := \{x\}$ and define $V_i:= t(C_{i})$ for $1 \le i \le k$. Therefore, $V_k = \{y\}$. The sets $V_i$ are disjoint by Proposition \ref{disj}. We also have that $\bigcup_{i=0}^{k} V_i = V(G)$, as $V(P)$ sees every vertex in $G$ by Lemma \ref{pathtree}(ii b).
Theorem \ref{thm:path} will follow immediately from the next claim.
\begin{claim}\label{Gbraid}
$G$ is the braid $(\{x\},V_1, \ldots, V_{k-1}, \{y\})$. 
\end{claim}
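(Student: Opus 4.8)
The plan is to verify directly that the partition $(\{x\}, V_1, \dots, V_{k-1}, \{y\})$ witnesses that $G$ is a braid, which amounts to checking the two containments $V_{i-1} \cup V_{i+1} \subseteq N_G(v) \subseteq V_{i-1} \cup V_i \cup V_{i+1}$ for every central index $i$ and every $v \in V_i$ (with the end clusters $\{x\}$ and $\{y\}$ requiring only the outer inclusion). I would first record what we already know: the $V_i$ are pairwise disjoint and cover $V(G)$, and $V(P)$ sees every vertex of $G$ by Lemma \ref{pathtree}(ii b). The crucial extra ingredient I would extract is that, by Lemma \ref{pathtree}(ii a), all children of any $v_j$ have equal $L_y$-value; since $L_y(v_0) = f_2(n) = \prod_{i=0}^{k-2} D(v_i)$ and the product is exactly the extremal value, the tree below $P$ is "rigid" — more precisely, for each $j$ the subtree sizes force that every vertex of $V_i$ behaves like $v_i$ with respect to the structure of the braid. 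I expect the honest content of the proof to live here.

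The first substantive step is the \emph{upper} containment $N_G(v) \subseteq V_{i-1} \cup V_i \cup V_{i+1}$ for $v \in V_i$. Suppose $v = t(w)$ for some $w \in C_i$, and suppose $v$ has a neighbour $u$ with $u \in V_j$ for some $j \notin \{i-1, i, i+1\}$ (or $u \notin \bigcup V_\ell$, but that case is excluded since the $V_\ell$ cover $G$). The idea is to build from $P$ an induced $x$–$y$ path that reroutes through $v$: follow $P$ up to $v_{i-1}$, jump to $w$ (legitimate since $v_{i-1}$ is adjacent to $v$ as $w \in C(v_{i-1})$), and then try to continue toward $y$. If $v$ has a neighbour in a cluster far past $i+1$, this creates a shortcut that lets one construct a tree with more than $f_2(n)$ leaves mapping to $y$ — or, more cleanly, one shows that the child-count sums would then exceed $n-2$, contradicting \eqref{pf2}. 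I would phrase this via the $x$-$y$-path tree: an edge from $v$ to a distant cluster produces, along some root-path through $w$, an extra unseen-then-seen vertex that inflates some $D(v_j)$ beyond its extremal value.

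The second step is the \emph{lower} containment $V_{i-1} \cup V_{i+1} \subseteq N_G(v)$ for every $v \in V_i$, i.e. consecutive clusters are completely joined. Here I would argue: since $V(P)\setminus\{v_k\}$ sees every vertex and $\sum D(v_j) = n-2$ is tight, the sets $C_i = C(v_{i-1})$ have sizes $D(v_{i-1})$ that are exactly $3$ (or the exceptional $2$ or $4$) and, by part (ii a), swapping $v_i$ for any other $v \in V_i$ along $P$ yields another path in $\mathcal{P}$ achieving the same product; applying Lemma \ref{pathtree}(ii b) to \emph{that} path shows it too sees all of $G$ with tight sums, which forces $v$ to have the same neighbourhood pattern across adjacent clusters as $v_i$ does — in particular $v$ must see all of $V_{i-1}$ (to be reachable as a child from each vertex of $V_{i-1}$) and all of $V_{i+1}$ (so that the subtree below $v$ still has $D(v_i)$ children). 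Combining the two containments for all $i$, and noting the end clusters are singletons by construction, gives exactly the braid structure, and the cluster sizes are the extremal $D(v_i)$-values, so $G \in \mathcal{F}_n$. The main obstacle is the bookkeeping in turning "the product of child-counts is extremal" into "every vertex of a cluster, not just the chosen representative $v_i$, is fully joined to the neighbouring clusters and to nothing else" — this is where part (ii a) of Lemma \ref{pathtree} must be leveraged carefully, possibly by re-running the path-selection argument with an arbitrary vertex of $V_i$ substituted for $v_i$.
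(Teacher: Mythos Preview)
Your plan has the right endpoints but misses the two mechanisms that actually drive the paper's proof. The paper does not verify the braid conditions directly for each $i$; it runs a \emph{reverse induction}, showing that $(V_j,\dots,V_k)$ is a braid in $G$ for $j=k-1,k-2,\dots,0$. Two ingredients you do not mention are essential. First, the \emph{minimality of $P$} (it was chosen as the shortest root-to-leaf path in $T$ with $t(v_k)=y$) is what rules out ``backward'' edges: an edge from $\bigcup_{i\le s-1}V_i$ into $V_{s+1}$ would, together with the braid structure already established on $V_{s+1},\dots,V_k$, yield a strictly shorter $x$--$y$ path, contradicting the choice of $P$. Your suggestion that a long-range edge ``inflates some $D(v_j)$'' does not work as stated: along the original path $P$ the children sets are fixed, and along a rerouted path an extra neighbour of $v\in V_i$ need not push any $D$ above its extremal value (it can be compensated elsewhere). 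Minimality is the clean tool here, and without it the upper containment is not established.

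Second, for the lower containment (every $v\in V_s$ is joined to every vertex of $V_{s+1}$), the paper does something more concrete than ``re-running with $v$ in place of $v_i$''. Assuming some $t(v)\in V_s$ misses some $t(z)\in V_{s+1}$, one first observes that any child $u$ of $v$ must have $t(u)\in V_{s+1}$ (using the inductive braid structure and minimality of $P$), and then builds, by a short two-case analysis on whether $t(u)t(z)\in E(G)$, an explicit induced path from $x$ that reaches $t(z)$ only \emph{after} all of $V_{s+2}$ has been seen. Such a path can never be extended to $y$, contradicting Lemma~\ref{pathtree}(ii\,b) (every leaf $l$ of $T$ has $t(l)=y$). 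Your proposal to use (ii\,a) to swap $v_i$ for $v$ and invoke tight sums is not circular per se, but it does not by itself pin down which vertices are the unseen neighbours of $v$; you would still need some structural argument (like the paper's dead-end construction, or minimality) to rule out the unseen neighbours lying in $V_{s+2}\cup\cdots\cup V_k$ rather than being exactly $V_{s+1}$. In short: the missing ideas are the use of the shortest-path property of $P$ and the explicit construction of non-extendable induced paths, both organised inside a reverse induction.
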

\begin{proof}
We prove by reverse induction on $j$ that the graph induced by $\bigcup_{i=j}^{k} V_i$ is a braid $(V_j, \ldots, V_{k})$ in $G$. First, note that by Lemma \ref{pathtree}(ii b), every leaf $l \in T$ satisfies $t(l) = y$. Thus no vertex in $C_{k-1}$ can be a leaf of $T$ (else we would have a shorter path to $y$ in $G$) and every vertex in $C_{k-1}$ has a child. Since, by Lemma \ref{pathtree}(ii b), $V(P)\backslash \{v_k\}$ sees every vertex of $G$, all vertices  except $y$ have been seen by $v_0,\ldots, v_{k-2}$. Thus every child $z$ of a vertex in $C_{k-1}$ satisfies $t(z) = y$, otherwise it would contradict Proposition \ref{disj}. Therefore every vertex in $C_{k-1}$ has exactly one child $z$ and $t(z) = y$. Therefore $(V_{k-1}, \{y\})$ is a braid, completing our base case. 

We now show that $N_G(y) = V_{k-1}$. Suppose there exist some $0 \le j < k-1$ and some $u \in V_j$ such that $u \in N(y)$. Then there exists a vertex $w$ in $T$ such that $w$ is a child of $v_{j-1}$, $t(w) = u$ and $w$ has a child $z$ with $t(z) = y$. The path $v_0,\ldots,v_{j-1},w,z \subseteq T$ is shorter than $P$, a contradiction. So $y$ is adjacent to no vertex in $\bigcup_{i=0}^{k-2} V_i$. As by Lemma \ref{pathtree}(ii b) $V(P)\backslash \{v_k\}$ sees every vertex of $G\backslash \{y\}$, $V(G) \subseteq \bigcup_{i=0}^{k-1} V_i$ and $N_G(y) = V_{k-1}$.

Now suppose that for $1 \le s+1 \le k-1$, the inductive hypothesis holds for $j =s+1$. So $(V_{s+1}, \ldots, V_{k})$ is a braid in $G$. We will show that $(V_{s}, \ldots, V_{k})$ is a braid in $G$. In order to do this, we need to show that for each $s+1 \le r \le k-1$, and every $u \in V_r$:
\begin{equation}
\label{braidcond}
V_{r-1} \cup V_{r+1} \subseteq N(u) \subseteq V_{r-1}\cup V_r \cup V_{r+1}.
\end{equation}

As by our inductive hypothesis $(V_{s+1}, \ldots, V_{k})$ is a braid in $G$, we know that (\ref{braidcond}) is satisfied for each $s+2 \le r \le k-1$ and every $u \in V_r$. We also know $V_{s+1} \subseteq N(u)$ for any $u \in V_{s+2}$. So it suffices to show for each $u \in V_{s+1}$, 
$$V_s \subseteq N(u) \subseteq V_{s} \cup V_{s+1} \cup V_{s+2}.$$

 We first show there are no edges between $\bigcup_{i=0}^{s-1} V_i$ and $V_{s+1}$. Suppose, for some $i \le s-1$, there exists a vertex $v \in C_i$ with a child $z$ such that $t(z) \in V_{s+1}$. Then there exists a shorter path in $T$ from $v_0$ to $v_k$: the path $v_0, \ldots, v_{i-1}, v, z, v_{s+2} \ldots, v_k$. This contradicts our choice of $P$ as the shortest such path. Thus no such vertex $v$ exists. So there are no edges between $\bigcup_{i=0}^{s-1} V_i$ and $V_{s+1}$. As $(V_{s+1},\ldots,V_k)$ is a braid in $G$, there are no edges between $\bigcup_{i=s+3}^k V_i$ and $V_{s+1}$. Thus $N(V_{s+1})\subseteq V_s \cup V_{s+1} \cup V_{s+2}$. 

It remains to show that $\{uw: u \in V_{s}, w \in V_{s+1}\} \subseteq E(G)$. Suppose there exists some $v \in C_{s}\backslash \{v_s\}$ and some $z \in C_{s+1}$ such that $t(v)$ is not adjacent to $t(z) \in V_{s+1}$. 

We know $t(v) \not= y$, it would contradict the choice of $P$ otherwise as $s < k-1$. As by Lemma \ref{pathtree} every leaf $l$ satisfies $t(l)=y$, $v$ is not a leaf and has a child $u$. We know that $t(u)$ is a neighbour of $t(v)$ in $G$ and that: 
\begin{itemize}
\item $t(u) \notin \bigcup_{i=0}^{s} V_i$, as $t(u)$ is unseen by $\{v_0, \ldots, v_{s-1}\}$ by construction of $T$;
\item $t(u) \notin \bigcup_{i=s+2}^{k} V_i$, as $V_{s+1}, \ldots, V_{k}$ forms a braid in $G$.
\end{itemize}
Thus $t(u) \in V_{s+1}$. 

If $s+2 = k$ (and so $V_{s+2} = V_k = \{y\}$) then consider the path $v_0,v_1,\ldots, v_{s-1},v \in T$. We have $L_y(v) = D(v) < D(v_s) = L_y(v_s)$, contradicting Lemma \ref{pathtree}(ii a).

Therefore $s + 2 < k$. Since every leaf $l \in T$ satisfies $t(l) = y$, by construction of $T$ any induced path $x,x_1,\ldots,x_j$ in $G$ such that $y \notin N(x_j)$ can be extended to an induced path terminating at $y$.

We consider two cases (see Figure \ref{cases12} for an illustration). 

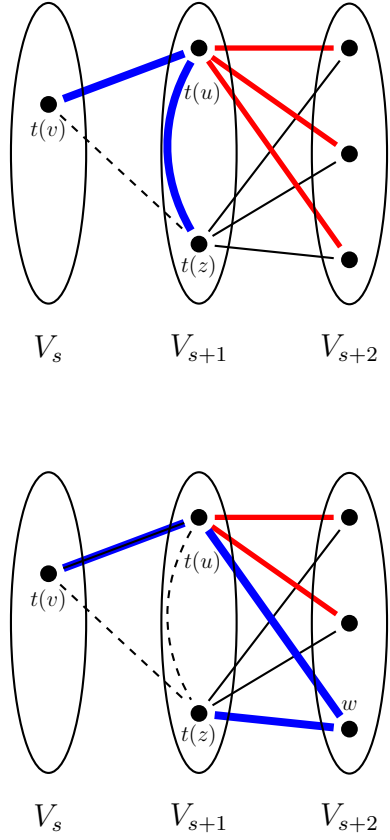
\begin{figure}[h]

\centering

\begin{tikzpicture}[thick,
  every node/.style={draw,circle}, 
  fsnode/.style={fill=black,scale=0.5},
  ssnode/.style={fill=black,scale=0.5},
  wsnode/.style={fill=black,scale=0.5},
  rsnode/.style={fill=black,scale=0.5},
  every fit/.style={ellipse,draw,inner sep=45pt,text width=0.1cm},
  -,shorten >= 3pt,shorten <= 3pt
]

\node[draw=none,fill=none] at (0,-1.4) (y) {};

\begin{scope}[yshift = -.75cm, start chain=going below,node distance=12mm]
\foreach \i in {1}
  \node[fsnode,on chain] (f\i){};
\end{scope}

\begin{scope}[xshift=2cm,yshift=-0cm,start chain=going below,node distance=24mm]
\foreach \i in {1,2}
  \node[ssnode,on chain] (s\i)  {};
\end{scope}

\begin{scope}[xshift=4cm,yshift=-0cm,start chain=going below,node distance=12mm]
\foreach \i in {1,2,...,3}
  \node[wsnode,on chain] (w\i)  {};
\end{scope}

\draw (f1) edge[-,line width = 3pt, color = blue] (s1)  (f1) edge[dashed] (s2) 
(w1) edge[-, line width = 2pt, color = red] (s1) (w2) edge[-] (s2)  (w1) edge[-] (s2)  (w2) edge[-, line width = 2pt, color = red] (s1)  (w3) edge[-, line width = 2pt, color = red] (s1) (w3) edge[-] (s2) (s1) edge [bend right, line width = 3pt, color=blue] (s2)

;

\node[draw=none,fill=none] at (2,-1.4) (r) {};
\node[draw=none,fill=none] at (2,-5.4) (q) {};
\draw (y) ellipse (0.5cm and 2cm);
\draw (r) ellipse (0.5cm and 2cm);
\draw (w2) ellipse (0.5cm and 2cm);

\node[draw=none,fill=none, scale = 0.7] at (0,-1.1) {$t(v)$};
\node[draw=none,fill=none, scale = 0.7] at (2.05,-0.6) {$t(u)$};
\node[draw=none,fill=none, scale = 0.7] at (2,-2.9) {$t(z)$};
\node[draw=none,fill=none] at (0,-4.0) {$V_s$};
\node[draw=none,fill=none] at (2,-4.0) {$V_{s+1}$};
\node[draw=none,fill=none] at (4,-4.0) {$V_{s+2}$};
\end{tikzpicture}

\begin{tikzpicture}[thick,
  every node/.style={draw,circle}, 
  fsnode/.style={fill=black,scale=0.5},
  ssnode/.style={fill=black,scale=0.5},
  wsnode/.style={fill=black,scale=0.5},
  rsnode/.style={fill=black,scale=0.5},
  every fit/.style={ellipse,draw,inner sep=45pt,text width=0.1cm},
  -,shorten >= 3pt,shorten <= 3pt
]

\node[draw=none,fill=none] at (0,-1.4) (y) {};
\node[draw=none,fill=none] at (2,-1.4) (z) {};
\begin{scope}[yshift = -.75cm, start chain=going below,node distance=12mm]
\foreach \i in {1}
  \node[fsnode,on chain] (f\i){};
\end{scope}

\begin{scope}[xshift=2cm,yshift=-0cm,start chain=going below,node distance=24mm]
\foreach \i in {1,2}
  \node[ssnode,on chain] (s\i)  {};
\end{scope}

\begin{scope}[xshift=4cm,yshift=-0cm,start chain=going below,node distance=12mm]
\foreach \i in {1,2,3}
  \node[wsnode,on chain] (w\i)  {};
\end{scope}

\draw (f1) edge[-, line width = 3pt, color= blue] (s1) (f1) edge[dashed] (s2)  edge[-] (s1) (w2) edge[-] (s2) (w1) edge[-] (s2)  (w2) edge[-, line width = 2pt, color = red] (s1)  (w3) edge[-, line width = 3pt, color= blue] (s1) (w3) edge[-, line width = 3pt, color= blue] (s2) (s1) edge [bend right, dashed] (s2) (s1) edge [-, line width = 2pt, color = red] (w1);

\draw (y) ellipse (0.5cm and 2cm);
\draw (z) ellipse (0.5cm and 2cm);
\draw (w2) ellipse (0.5cm and 2cm);

\node[draw=none,fill=none, scale = 0.7] at (0,-1.1) {$t(v)$};
\node[draw=none,fill=none, scale = 0.7] at (2.05,-0.6) {$t(u)$};
\node[draw=none,fill=none, scale = 0.7] at (2,-2.9) {$t(z)$};
\node[draw=none,fill=none,scale = 0.7] at (4,-2.5) {$w$};
\node[draw=none,fill=none] at (0,-4.0) {$V_s$};
\node[draw=none,fill=none] at (2,-4.0) {$V_{s+1}$};
\node[draw=none,fill=none] at (4,-4.0) {$V_{s+2}$};

\end{tikzpicture}

\caption{Examples of the cases we get if $t(v)$ is not adjacent to every vertex in $V_{s+1}$. The upper picture is the case $t(u)$ is adjacent to $t(z)$. The lower picture is the case $t(u)$ is not adjacent to $t(z)$. The dashed lines represent non-edges. The blue lines are the induced path we take from $t(v)$ in each case. The red edges depict which vertices have been seen by $P\backslash \{t(z)\}$.}
\label{cases12}
\end{figure}

First suppose that $t(u)$ is adjacent to $t(z)$. Consider $P:= t(v_0),\ldots, t(v_{s-1}),t(v),t(u),t(z)$, an induced path in $G$. As $t(z) \in V_{s+1}$ and $s+1 < k-1$, $y$ is not adjacent to $t(z)$ and so it is possible to extend $P$ to an induced path terminating at $y$. As $(V_{s+1},\ldots, V_k)$ is a braid in $G$, any extension of $P$ to an induced path that terminates at $y$ contains a vertex from $V_{s+2}$. However, $V_{s+2} \subseteq N_G(t(u))$ (and so $V_{s+2}$ has been seen by $V(P_u)$). It is therefore impossible to extend $P$ to an induced path terminating at $y$, a contradiction.

Now suppose that $t(u)$ is not adjacent to $t(z)$. Let $w$ be a neighbour of $t(u)$ in $V_{s+2}$. Consider the induced path $P:= t(v_0),\ldots, t(v_{s-1}),t(v),t(u),w,t(z)$. Observe that $y \notin N(t(z))$. As $t(z) \in V_{s+1}$ and $(V_{s+1},\ldots, V_k)$ is a braid in $G$, any extension of $P$ from $t(z)$ to an induced path that terminates at $y$ passes through $V_{s+2}$. However, $V_{s+2} \subseteq N_G(t(u))$ and so has been seen by $V(P_{t(u)})$. It is therefore impossible to extend this $P$ from $t(z)$ to an induced path terminating at $y$, a contradiction. 

Thus $\{uw: u \in V_{s}, w \in V_{s+1}\} \subseteq E(G)$. We conclude that the graph induced by $\bigcup_{i=s}^{k} V_i$ is indeed a braid $(V_{s}, \ldots, V_{k})$ in $G$. Claim \ref{Gbraid} now follows by induction.
\end{proof}
We have $|V_i| = D(v_{i-1})$. As $\prod_{i=0}^{k-2}D(v_i) = p_2(F(n)),$ a straightforward check shows that the braid is in $\mathcal{F}_n$. Hence Theorem \ref{thm:path} follows.
\end{proof}

\subsection{Odd and even induced paths between a pair of vertices}
 Let $G$ be a graph and let $x$ and $y$ be distinct vertices in $V(G)$. We will define similar notions for odd and even paths as we did for paths in general at the start of Section \ref{section:path}. Define $p_2^o(G;x,y)$ to be the number of induced odd paths in $G$ beginning at $x$ and ending at $y$. Also define:
 
 $$p_2^o(G):= \max \{p_2^o(G;x,y): x,y \in V(G)\},$$
 and 
 $$p_2^o(n):= \max \{p_2^o(G):|V(G)| = n\}.$$
In addition, define  $p_2^e(G;x,y)$ to be the number of induced even paths in $G$ beginning at $x$ and ending at $y$. In the even case, define $p_2^e(G)$ and $p_2^e(n)$ analogously to $p_2^o(G)$ and $p_2^o(n)$.

In this subsection we will determine the structure of the $n$-vertex graphs that contain $p_2^o(n)$ induced odd paths (or $p_2^e(n)$ induced even paths) between some pair of vertices (proving Theorems \ref{oddpath} and \ref{evenpath}). Theorem \ref{oddpath} will be used to prove Theorem \ref{oddcycle}. The extremal graphs for this path problem will have a certain structure that depends on the value of $n$ modulo 6. For $n \ge 10$, define $\mathcal{F}^o_n$ to be the set of all braids $B$ with the following properties.

\begin{itemize}
\item $|V(B)| = n$.
\item $B$ has end clusters of size 1.
\item All central clusters of $B$ have size three except:
\begin{itemize}
\item a single cluster of size 4, when $n \equiv 0$ modulo 6;
\item either two clusters of size 4 or four clusters of size 2, when $n \equiv 1$ modulo 6;
\item three clusters of size 2, when $n \equiv 2$ modulo 6;
\item two clusters of size 2, when $n \equiv 3$ modulo 6; and
\item one cluster of size 2, when $n \equiv 4$ modulo 6.
\end{itemize}
\end{itemize}

Let $F \in \mathcal{F}^o_n$ and suppose that the end clusters are $\{x\}$ and $\{y\}$. Observe that every induced path between $x$ and $y$ is odd. It is not difficult to check that for all $F \in \mathcal{F}^o_n$ we have $p_2^o(F) = p_2^o(F;x,y)$. Every graph in $\mathcal{F}^o_n$ contains the same number of induced paths between $x$ and $y$ and so we define:
$$ f_2^o(n) = \left\{ \begin{array}{c l}      
    4 \cdot 3^{(n-6)/3}  & \text{ for } n \equiv 0 \text{ modulo 6}\\
    2^4 \cdot 3^{(n-10)/3}  & \text{ for } n \equiv 1 \text{ modulo 6} \\
    2^3 \cdot 3^{(n-8)/3} & \text{ for } n \equiv 2 \text{ modulo 6}\\
    2^2 \cdot 3^{(n-6)/3}  & \text{ for } n \equiv 3 \text{ modulo 6}\\
    2 \cdot 3^{(n-4)/3}  & \text{ for } n \equiv 4 \text{ modulo 6} \\
    3^{(n-2)/3} & \text{ for } n \equiv 5 \text{ modulo 6}.\\    
    
\end{array}\right.$$

The following is a theorem for odd paths analogous to Theorem \ref{thm:path}. 

\begin{thm}\label{oddpath}
Let $G$ be a finite graph on $n \ge 10$ vertices and let $x$ and $y$ be distinct vertices of $G$. Suppose that $p_2^o(G;x,y) = p_2^o(n)$. Then $G$ is isomorphic to a graph in $\mathcal{F}^o_n$ with end clusters $\{x\}$ and $\{y\}$.
\end{thm}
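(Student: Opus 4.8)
The plan is to mirror the proof of Theorem~\ref{thm:path}, replacing the quantity $L_y(v_0)$ (the number of leaves labelled $y$) by the number of such leaves whose $G$-path is odd, and replacing $f_2(n)$ by $f_2^o(n)$. First I would set up an odd/even refinement of the $x$-$y$-path tree: for a vertex $w\in V(T)$ at depth $d$ (so its $G$-path has $d+1$ vertices), say $w$ is \emph{odd} or \emph{even} according to the parity of $d+1$, and define $L_y^o(z)$ to be the number of leaves $l\in B(z)$ with $t(l)=y$ and $l$ odd. Since every path in $T$ alternates parity with depth, the parity of a leaf labelled $y$ is determined once we fix the depth at which $y$ is reached, so all we are really tracking is the parity of the length of the path $P=v_0,\dots,v_k$ from the root to the $y$-leaf. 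Then $p_2^o(G;x,y)=\sum L_y^o(z)$ over the children of $v_0$, and I want to show this is at most $f_2^o(n)$ with equality characterising membership in $\mathcal F_n^o$.

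Next I would prove the analogue of Lemma~\ref{pathtree}. Following the greedy argument there, pick a path $v_0,\dots,v_k$ to a $y$-leaf maximising $L_y^o$ at each step; then $L_y^o(v_0)\le \prod_{i=0}^{k-2}D(v_i)$ exactly as in (\ref{leafend}), but now with the additional constraint that $k$ has the correct parity for the path $v_0,\dots,v_k$ (length $k+1$ vertices) to be odd, i.e. $k$ is even. Here $\sum_{i=0}^{k-2}D(v_i)\le n-2$ as before by Proposition~\ref{disj}. The optimisation problem is therefore: maximise $\prod_{i=0}^{k-2}D(v_i)$ subject to $\sum D(v_i)\le n-2$ \emph{and} $k$ even (equivalently, an even number of factors, since there are $k-1$ factors indexed $0,\dots,k-2$, so we need $k-1$ odd, i.e. $k$ even — I would double-check this parity bookkeeping carefully). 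The maximum of a product of positive integers summing to a fixed value is attained by using as many $3$'s as possible with $2$'s or a $4$ to mop up the remainder; the parity constraint on the number of factors forces, depending on $n\bmod 6$, the substitution of one more $2$ (replacing part of a $3$) or the splitting of a $3+3$ into $2+2+2$ or of a $3$ into $4$ with an extra factor, which is exactly the case analysis encoded in the definition of $\mathcal F_n^o$ and $f_2^o(n)$. This gives part (i), and tracking equality (every $D(v_i)$ takes the prescribed value, $\sum D(v_i)=n-2$, equality in each summation step so $L_y^o(u)=L_y^o(w)$ for siblings $u,w$, and $V(P)\setminus\{v_k\}$ sees all of $G$) gives the analogue of part (ii).

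Finally, with the analogue of Lemma~\ref{pathtree} in hand, the proof of Theorem~\ref{oddpath} runs verbatim along the lines of the proof of Theorem~\ref{thm:path}: take $P$ the shortest root-to-$y$-leaf path, set $V_i=t(C(v_{i-1}))$, use Proposition~\ref{disj} to get disjointness and Lemma~\ref{pathtree}(ii\,b)-analogue to get $\bigcup V_i=V(G)$, and then prove by reverse induction on $j$ that $G[\bigcup_{i\ge j}V_i]$ is a braid $(V_j,\dots,V_k)$. The base case ($N_G(y)=V_{k-1}$ and $(V_{k-1},\{y\})$ a braid) and the inductive step (ruling out missing edges inside consecutive cluster pairs, and ruling out long edges, by producing a shorter path or by showing an induced path cannot be extended to $y$) are identical to the path case, since none of those arguments used the parity — the only place parity entered was the optimisation and the resulting cluster-size profile. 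One subtlety to handle: when $n\equiv 1\bmod 6$ there are \emph{two} optimal profiles (two clusters of size $4$, or four clusters of size $2$), so the equality analysis of the optimisation must be stated to allow both, and both are included in $\mathcal F_n^o$; I would make sure the "quick check" of the integer optimisation explicitly lists all optimal profiles for each residue. I expect the main obstacle to be precisely this integer-optimisation-with-parity-constraint step — getting the parity bookkeeping right (number of factors versus path length versus $n\bmod 6$) and verifying that the listed profiles in the definition of $\mathcal F_n^o$ are exactly the maximisers — rather than the braid-reconstruction argument, which should transfer with only cosmetic changes.
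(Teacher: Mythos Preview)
Your proposal is correct and follows essentially the same approach as the paper: define the odd-leaf count $L_o$, prove the odd analogue of Lemma~\ref{pathtree} via the greedy path (with the extra conclusion that $k$ is even), reduce to the integer optimisation with a parity constraint on the number of factors, and then reuse the braid-reconstruction argument of Theorem~\ref{thm:path} verbatim. Your flagged parity bookkeeping does contain a slip (you write ``equivalently, an even number of factors'' but then correctly derive that $k-1$ is odd, i.e.\ an \emph{odd} number of factors), but since you already note this needs double-checking, and the correct version is the one you end with, this is cosmetic rather than a gap.
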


The proof of this theorem is very similar to the proof of Theorem \ref{thm:path} and a sketch will be given later in this subsection.

We will also state a version of Theorem \ref{oddpath} for even length paths (Theorem \ref{evenpath}). As one would expect, the extremal graphs differ from those in the odd case. Thus for $n \ge 10$, define $\mathcal{F}^e_n$ to be the set of all braids $B$ with the following properties.

\begin{itemize}
\item $|V(B)| = n$.
\item $B$ has end clusters of size 1.
\item All central clusters of $B$ have size three except:
\begin{itemize}
\item two clusters of size 2, when $n \equiv 0$ modulo 6; 
\item one cluster of size 2, when $n \equiv 1$ modulo 6;
\item a single cluster of size 4, when $n \equiv 3$ modulo 6;
\item either two clusters of size 4 or four clusters of size 2, when $n \equiv 4$ modulo 6; and
\item three clusters of size 2, when $n \equiv 5$ modulo 6.

\end{itemize}
\end{itemize}
Observe that the extremal graphs in the odd and even cases are essentially the same (shifting by 3 modulo 6), as when $n\ge 13$ we can delete a cluster of size 3 to get from an extremal graph for the odd case to an extremal graph for the even case (or vice versa). See Figure \ref{evo} for an example.

\begin{figure}
\centering
\begin{tikzpicture}[thick,
  every node/.style={draw,circle}, 
  fsnode/.style={fill=black,scale=0.5},
  ssnode/.style={fill=black,scale=0.5},
  wsnode/.style={fill=black,scale=0.5},
  tsnode/.style={fill=black,scale=0.5},
  rsnode/.style={fill=black,scale=0.5},
  every fit/.style={ellipse,draw,inner sep=45pt,text width=0.1cm},
  ->,shorten >= 3pt,shorten <= 3pt
]

\node[draw=none,fill=none] at (0,-1.4) (y) {};

\begin{scope}[yshift = -.75cm, start chain=going below,node distance=12mm]
\foreach \i in {1,2}
  \node[fsnode,on chain] (f\i){};
\end{scope}

\begin{scope}[xshift=2cm,yshift=-0cm,start chain=going below,node distance=12mm]
\foreach \i in {1,2,...,3}
  \node[ssnode,on chain] (s\i)  {};
\end{scope}

\begin{scope}[xshift=4cm,yshift=-0cm,start chain=going below,node distance=12mm]
\foreach \i in {1,2,...,3}
  \node[wsnode,on chain] (w\i)  {};
\end{scope}

\begin{scope}[xshift=6cm,yshift=-0cm,start chain=going below,node distance=12mm]
\foreach \i in {1,2,...,3}
  \node[tsnode,on chain] (t\i)  {};
\end{scope}

\begin{scope}[xshift=8cm,yshift=-14mm,start chain=going below,node distance=12mm]
\foreach \i in {1}
  \node[rsnode,on chain] (r\i)  {};
\end{scope}

\begin{scope}[xshift = -2cm, yshift=-14mm,start chain=going below,node distance=12mm]
\foreach \i in {1}
  \node[rsnode,on chain] (p\i)  {};
\end{scope}

\draw (p1) edge[-] (f1)(p1) edge[-] (f2) (f1) edge[-] (s1) (f2) edge[-] (s2)  (f1) edge[-] (s2) (f1) edge[-] (s3) (f2) edge[-] (s1) (f2) edge[-] (s3) 
(w1) edge[-] (s1) (w2) edge[-] (s2) (w3) edge[-] (s3) (w1) edge[-] (s2) (w1) edge[-] (s3) (w2) edge[-] (s1) (w2) edge[-] (s3) (w3) edge[-] (s1) (w3) edge[-] (s2)
(w1) edge[-] (t1) (w1) edge[-] (t2) (w1) edge[-] (t3) (w2) edge[-] (t1) (w2) edge[-] (t2) (w3) edge[-] (t3) (w2) edge[-] (t3) (w3) edge[-] (t1) (w3) edge[-] (t2)
(t1) edge[-] (r1) (t2) edge[-] (r1) (t3) edge[-] (r1)
;

\draw (y) ellipse (0.5cm and 1.2cm);
\draw (s2) ellipse (0.5cm and 2cm);
\draw (w2) ellipse (0.5cm and 2cm);
\draw (t2) ellipse (0.5cm and 2cm);

\end{tikzpicture}
\begin{tikzpicture}[thick,
  every node/.style={draw,circle}, 
  fsnode/.style={fill=black,scale=0.5},
  ssnode/.style={fill=black,scale=0.5},
  wsnode/.style={fill=black,scale=0.5},
  rsnode/.style={fill=black,scale=0.5},
  every fit/.style={ellipse,draw,inner sep=45pt,text width=0.1cm},
  ->,shorten >= 3pt,shorten <= 3pt
]

\node[draw=none,fill=none] at (0,-1.4) (y) {};

\begin{scope}[yshift = -.75cm, start chain=going below,node distance=12mm]
\foreach \i in {1,2}
  \node[fsnode,on chain] (f\i){};
\end{scope}

\begin{scope}[xshift=2cm,yshift=-0cm,start chain=going below,node distance=12mm]
\foreach \i in {1,2,...,3}
  \node[ssnode,on chain] (s\i)  {};
\end{scope}

\begin{scope}[xshift=4cm,yshift=-0cm,start chain=going below,node distance=12mm]
\foreach \i in {1,2,...,3}
  \node[wsnode,on chain] (w\i)  {};
\end{scope}

\begin{scope}[xshift=6cm,yshift=-14mm,start chain=going below,node distance=12mm]
\foreach \i in {1}
  \node[rsnode,on chain] (r\i)  {};
\end{scope}

\begin{scope}[xshift = -2cm, yshift=-14mm,start chain=going below,node distance=12mm]
\foreach \i in {1}
  \node[rsnode,on chain] (p\i)  {};
\end{scope}

\draw (p1) edge[-] (f1)(p1) edge[-] (f2) (f1) edge[-] (s1) (f2) edge[-] (s2)  (f1) edge[-] (s2) (f1) edge[-] (s3) (f2) edge[-] (s1) (f2) edge[-] (s3) 
(w1) edge[-] (s1) (w2) edge[-] (s2) (w3) edge[-] (s3) (w1) edge[-] (s2) (w1) edge[-] (s3) (w2) edge[-] (s1) (w2) edge[-] (s3) (w3) edge[-] (s1) (w3) edge[-] (s2)
(w1) edge[-] (r1) (w2) edge[-] (r1) (w3) edge[-] (r1)
;

\draw (y) ellipse (0.5cm and 1.2cm);
\draw (s2) ellipse (0.5cm and 2cm);
\draw (w2) ellipse (0.5cm and 2cm);

\end{tikzpicture}
\caption{An example of a braid in $\mathcal{F}^e_{13}$ and a braid in $\mathcal{F}^o_{10}$. There may or may not be edges within the clusters.}
\label{evo}
\end{figure}
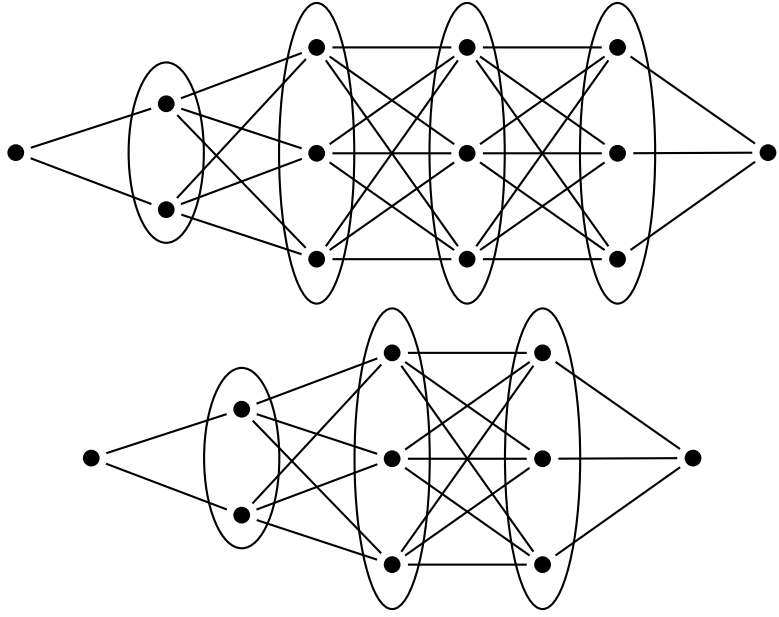

\begin{thm}\label{evenpath}
Let $G$ be a finite graph on $n \ge 10$ vertices and let $x$ and $y$ be distinct vertices of $G$. Suppose that $p_2^e(G;x,y) = p_2^e(n)$. Then $G$ is isomorphic to a graph in $\mathcal{F}^e_n$ with end clusters $\{x\}$ and $\{y\}$.
\end{thm}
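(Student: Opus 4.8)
The plan is to transcribe the proof of Theorem~\ref{thm:path} (equivalently, the odd‑path version Theorem~\ref{oddpath}) almost verbatim, the only new ingredient being a parity refinement of the $x$‑$y$‑path tree. Let $G,x,y$ satisfy $p_2^e(G;x,y)=p_2^e(n)$. First note $xy\notin E(G)$: an edge $xy$ contributes only the single induced path $xy$ (any longer induced $x$‑$y$ path together with $xy$ fails to be induced), whereas $p_2^e(n)=f_2^e(n)$ is large; so $y\notin N_G[x]$ and the $x$‑$y$‑path tree $T$ of $G$ rooted at $v_0$ is defined. Since the $G$‑path of a node $w\in V(T)$ has $1+\mathrm{dist}_T(v_0,w)$ vertices, induced \emph{even} $x$‑$y$ paths are in bijection with the leaves $w$ of $T$ having $t(w)=y$ and $\mathrm{dist}_T(v_0,w)$ \emph{odd}. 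Accordingly, for $v\in V(T)$ let $a(v)$, resp.\ $b(v)$, be the number of leaves $w$ of the branch $B(v)$ with $t(w)=y$ and $\mathrm{dist}_T(v,w)$ even, resp.\ odd. Then $a(v)=\sum_{z\in C(v)}b(z)$ and $b(v)=\sum_{z\in C(v)}a(z)$, a leaf mapping to $y$ has $(a,b)=(1,0)$, every other leaf has $(a,b)=(0,0)$, and $p_2^e(G;x,y)=b(v_0)$ (while $p_2^o(G;x,y)=a(v_0)$).

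The next step is the even analogue of Lemma~\ref{pathtree}. Run the greedy construction of Lemma~\ref{pathtree}, but at a node where a $b$‑sum is being taken choose the child with largest $a$‑value, and at a node where an $a$‑sum is taken choose the child with largest $b$‑value (the two cases alternate along the path because the recursion swaps $a$ and $b$). Telescoping the inequalities $b(v_0)\le D(v_0)\,a(v_1)\le D(v_0)D(v_1)\,b(v_2)\le\cdots$ down to the terminal leaf $v_k$ gives $b(v_0)\le\bigl(\prod_{i=0}^{k-1}D(v_i)\bigr)\cdot c$, where $c$ is the $b$‑value of $v_k$ if $k$ is even and the $a$‑value of $v_k$ if $k$ is odd. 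Since a leaf always has $b$‑value $0$, the hypothesis $b(v_0)=p_2^e(n)>0$ forces $k$ odd and $t(v_k)=y$; then $D(v_{k-1})=1$ (by construction of $T$, a node with a child mapping to $y$ has no other child), so $b(v_0)\le\prod_{i=0}^{k-2}D(v_i)$, a product of $k-1$ integers with $\sum_{i=0}^{k-2}D(v_i)\le n-2$ by Proposition~\ref{disj}, and with $k-1$ \emph{even}. One then checks, as in the proof of Lemma~\ref{pathtree} but with the extra parity constraint on the number of factors (and after the routine observation that a factor equal to $1$ never helps), that $\prod_{i=0}^{k-2}D(v_i)\le f_2^e(n)$, with equality precisely when the $D(v_i)$ form one of the multisets of central‑cluster sizes appearing in $\mathcal{F}^e_n$ (for $n\equiv 4\pmod 6$ both $\{4,4\}$ and $\{2,2,2,2\}$ are optimal, which is why $\mathcal{F}^e_n$ lists both). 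Since every $F\in\mathcal{F}^e_n$ has $p_2^e(F)=f_2^e(n)$, the hypothesis forces equality throughout, so, exactly as in Lemma~\ref{pathtree}(ii), along every root‑to‑leaf path of $T$ all siblings carry equal $a$‑ (or $b$‑) values, every leaf maps to $y$, and every such path with its leaf removed sees all of $V(G)$.

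Finally, from these equality conditions one deduces that $G$ is a braid, by the reverse induction on clusters of Claim~\ref{Gbraid}, verbatim: setting $V_0=\{x\}$ and $V_i=t(C(v_{i-1}))$, one uses Proposition~\ref{disj} together with the ``any induced path not seeing $y$ extends to an induced path ending at $y$'' argument to forbid both missing edges between consecutive clusters $V_i,V_{i+1}$ and stray edges between non‑consecutive clusters, concluding that $G=(\{x\},V_1,\dots,V_{k-1},\{y\})$ with $|V_i|=D(v_{i-1})$. Since $\prod_{i=0}^{k-2}D(v_i)=f_2^e(n)$ and $k-1$ is even, the sizes $|V_i|$ realise one of the optimal multisets above, so $G\in\mathcal{F}^e_n$, as required. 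I expect the only genuinely new point, compared with Theorems~\ref{thm:path} and~\ref{oddpath}, to be the parity bookkeeping in the greedy step: one must verify that the alternating greedy choice really reaches $y$ at the correct depth parity---a ``wrong‑parity'' arrival at $y$ contributes nothing to $b(v_0)$ and so must be excluded using $p_2^e(G;x,y)>0$---and that the even‑factor‑count optimisation pins down precisely the family $\mathcal{F}^e_n$ (which here is slightly larger than in the odd case). Everything else is a transcription of arguments already in place.
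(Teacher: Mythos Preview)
Your proposal is correct and follows exactly the route the paper indicates: the paper omits the proof of Theorem~\ref{evenpath} entirely, remarking only that the proof of Theorem~\ref{oddpath} ``can easily be adapted,'' and your write-up carries out precisely that adaptation. The one cosmetic difference is that where the paper's sketch of Theorem~\ref{oddpath} tracks a single quantity $L_o(z)$ (counting $y$-leaves at the correct parity measured from the root $v_0$), you instead track the pair $(a(z),b(z))$ measured from $z$ and alternate between them along the greedy path; both bookkeeping schemes are equivalent and lead to the same product bound $\prod_{i=0}^{k-2}D(v_i)$ with an even number of factors, after which the optimisation and the braid reconstruction via Claim~\ref{Gbraid} go through verbatim.
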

To obtain the proof of Theorem \ref{oddcycle}, we only need Theorem \ref{oddpath}. We remark that the proof of Theorem \ref{oddpath} can easily be adapted to prove Theorem \ref{evenpath}, so we omit the proof of Theorem \ref{evenpath}. 

\begin{proof}[Sketch proof of Theorem \ref{oddpath}]
 Fix $x$ and $y$ to be distinct vertices of $G$ with $y \notin N[x]$. Let $T$ be the $x$-$y$-path tree rooted at $v_0$. For $z \in V(T)$ define $L_o(z)$ to be the number of leaves $l$ contained in $B(z)$ such that $t(l) = y$ and the path from $v_0$ to $l$ is odd. 

We first prove an \emph{odd-path version} of Lemma \ref{pathtree}.

\begin{claim}[Odd-path version of Lemma \ref{pathtree}]\label{clam}
Let $G$ be a graph on $n \ge 10$ vertices. Let $x$ and $y$ be distinct vertices in $V(G)$ with $y \notin N[x]$ and $p_2^o(G;x,y) >0$. Let $T$ be the $x-y$-path tree rooted at $v_0$ and $P := x_1,\ldots,x_k$ be any path in $T$ where $x_0 = v_0$ and $x_k$ is a leaf. Then:
\begin{itemize}
\item [(i)]  $L_o(v_0) \le f_2^o(n)$. 
\item [(ii)] If $L_o(v_0) = f_2^o(n)$, then:
\begin{itemize}
\item [(a)] for any $x_j$ and for all $u,w \in C(x_{j})$, we have $L_o(u) = L_o(w)$;
\item [(b)] $k$ is even;
\item [(c)] $V(P)\backslash \{x_k\}$ sees every vertex of $G$ and $t(x_k) = y$.
\end{itemize}
\end{itemize}
\end{claim}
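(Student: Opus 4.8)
The plan is to run the same greedy-path argument as in Lemma \ref{pathtree}, but keeping track of the parity of the distance from $v_0$ in the tree $T$. The key structural fact is that the parity of the $G$-path of a vertex $w \in V(T)$ equals the parity of its depth in $T$, so $L_o(z)$ counts exactly those leaves $l$ in $B(z)$ with $t(l)=y$ lying at even tree-depth. Mimicking the proof of Lemma \ref{pathtree}, I would sequentially build a path $P = v_0,\dots,v_k$ in $T$, at each step $v_j$ choosing a child $v_{j+1} \in C(v_j)$ with $L_o(v_{j+1})$ maximal among children of $v_j$; let $\mathcal{P}$ be the family of all paths obtainable this way. Using $p_2^o(G;x,y)>0$ and the fact that a vertex adjacent to $y$ has $y$ as its unique child, the same argument as before shows every such $P$ ends with $t(v_k)=y$ and that the path from $v_0$ to $v_k$ is odd, i.e. $k$ is even (for part (ii b) in the extremal case; in general $\mathcal{P}$ only contains such paths when $p_2^o>0$).

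For the inequality (i), I would again telescope. Writing $D(v_i)=|C(v_i)|$, the relation $L_o(v_i)=\sum_{z\in C(v_i)}L_o(z)\le D(v_i)L_o(v_{i+1})$ holds exactly as in \eqref{ind}, giving $L_o(v_0)\le \prod_{i=0}^{k-2}D(v_i)$ with $\sum_{i=0}^{k-2}D(v_i)\le n-2$ by Proposition \ref{disj}. The difference from the unrestricted case is the extra constraint $k$ even: the product $\prod D(v_i)$ must have an \emph{even number} of factors beyond a controlled amount (since the tree-path to $y$ has even length, but the factors $D(v_0),\dots,D(v_{k-2})$ run over $k-1$ central clusters, with one more factor $D(v_{k-1})=1$ coming from the forced last step to $y$). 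Carrying out the optimisation of $\prod_{i=0}^{k-2} D(v_i)$ subject to $\sum D(v_i)\le n-2$ and a parity constraint on the number of indices gives precisely the six cases defining $f_2^o(n)$ according to $n \bmod 6$: one replaces one $3$ by a $4$ (gaining a factor $4/3$ but keeping parity), or two $3$'s by a $4$-and-$4$ or by four $2$'s (each adjusting the count of factors by the right parity), etc. This is the routine-but-fiddly arithmetic check analogous to the ``quick check'' in Lemma \ref{pathtree}; it is the natural place for the bookkeeping to get delicate, but it is not conceptually hard.

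Finally, for the equality case (ii): $L_o(v_0)=f_2^o(n)$ forces $\sum_{i=0}^{k-2}D(v_i)=n-2$, forces $k$ even, forces the $D(v_i)$ to realise one of the listed optimal configurations, and forces equality in every instance of \eqref{ind}, which gives (a): $L_o(z)=L_o(w)$ for all $z,w\in C(v_i)$ along $P$. The propagation of (a) to \emph{all} of $T$ and the conclusion (c) that $V(P)\setminus\{x_k\}$ sees every vertex of $G$ are obtained by the identical ``longest common initial segment'' argument as at the end of the proof of Lemma \ref{pathtree}: if some root-to-leaf path $X\notin\mathcal{P}$, take $P'\in\mathcal{P}$ agreeing with $X$ on the longest initial segment, derive $L_o(x_j)=L_o(y_j)$ at the first point of divergence, contradiction; hence $X\in\mathcal{P}$ and \eqref{pf2}-type equality holds for $X$ too, so $\sum_{j=0}^{k-2}D(x_j)=n-2$ and $X\setminus\{x_k\}$ sees all of $G$. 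The main obstacle is simply making sure the parity constraint is threaded correctly through the extremal optimisation — in particular that the ``$k$ even'' requirement is equivalent to the parity condition on the multiset $\{D(v_i)\}$ that singles out exactly the $\mathcal{F}^o_n$ configurations — but once that translation is set up, everything else is a transcription of the earlier proof.
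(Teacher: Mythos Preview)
Your proposal is correct and follows essentially the same approach as the paper: greedily build paths in $\mathcal{P}$ by maximising $L_o$ among children, show (using $p_2^o(G;x,y)>0$) that any such path has $k$ even and $t(v_k)=y$, telescope to $L_o(v_0)\le\prod_{i=0}^{k-2}D(v_i)$ with $\sum D(v_i)\le n-2$ and $k$ even, optimise subject to the parity constraint to obtain $f_2^o(n)$, and then run the identical ``longest common initial segment'' argument for the equality case. The only point to tidy is that $k$ even and $t(v_k)=y$ hold for every $P\in\mathcal{P}$ already at the stage of proving (i), not just in the extremal case --- this is exactly how the paper argues it as well.
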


\begin{proof}[Proof of Claim]
We essentially mimic the proof of Lemma \ref{pathtree}, replacing $\mathcal{F}_n$ with $\mathcal{F}^o_n$ and $L_y(z)$ with $L_o(z)$ for any $z \in T$. 

Sequentially choose a path $v_0,\ldots,v_k \subseteq V(T)$, where $v_k$ is a leaf. At vertex $v_j$ we choose $v_{j+1}$ to be some $z \in C(v_j)$ such that $L_o(z) = \max \{L_o(x):x \in C(v_j)\}$. Let $\mathcal{P}$ be the set of paths that can be obtained in this manner and fix $P:= v_0,\ldots,v_k \in \mathcal{P}$.

We now show that $k$ is even and $t(v_k) = y$. Suppose first that $k$ is odd. Thus any path from $v_0$ to some leaf neighbour of $v_{k-1}$ is even. As $p^o_2(G;x,y)>0$, by construction of $P$ we have $L_o(v_{k-1})>0$. So $v_{k-1}$ has some non-leaf child $z$ with $L_o(z)>0$. But then $L_o(z)> L_o(v_k) = 0$, a contradiction. So $k$ is even. The fact that $t(v_k) = y$ follows from exactly the same argument as in Lemma \ref{pathtree}.

Arguing as in (\ref{leafprod}), we see that 
\begin{equation}
L_o(v_0) \le \prod_{i=0}^{k-2}D(v_i),
\end{equation}
where $k$ is even and $\sum_{i=0}^{k-2} D(v_i) \le n-2$, as $v_0,\ldots, v_{k-2}$ have disjoint sets of children in $G\backslash \{x,y\}$ by proposition \ref{disj}. 

It is not difficult to check that the maximal value of $\prod_{i=0}^{k-2}D(v_i)$ subject to $\sum_{i=0}^{k-2} D(v_i) \le n-2$, where $k$ is even is $f_2^o(n)$. This concludes the proof of (i).

When $L_o(v_0) = f_2^o(n)$ we have for even $k$:
$$\prod_{i=0}^{k-2}D(v_i) = f_2^o(n).$$
Statement (ii a) follows from an analogous argument to the proof of (ii a) in Lemma \ref{pathtree}. Also (using an identical argument to the one used in Lemma \ref{pathtree}) we have that any path $X:= x_0,\ldots, x_j$, where $x_0 = v_0$ and $x_k$ is a leaf, is in $\mathcal{P}$. Thus $j$ is even, as required for (ii b), and $t(v_k) = y$. The other claim in statement (ii c) follows an analogous argument to the proof of (ii b) in Lemma \ref{pathtree}. This completes the proof of the claim.
\end{proof}

 In particular, we know that any path $v_0 \ldots v_k$, where $v_k$ is a leaf, is odd and satisfies $t(v_k) = y$. We now use analogous arguments to those used in the proof of Theorem \ref{thm:path} replacing $\mathcal{F}_n$ with $\mathcal{F}^o_n$, replacing $L_y(z)$ with $L_o(z)$ for any $z \in T$ and applying Claim \ref{clam} in the place of Lemma \ref{pathtree}, to show that $G$ is a braid in $\mathcal{F}^o_n$. 

\end{proof}

\section{Proof of Theorem \ref{main}}\label{section:cycle}

We fix a large constant $n_0$ and let $G_{max}$ be a graph on $n \ge n_0$ vertices, that contains $m(n)$ induced cycles. In what follows we will take $n_0$ (and thus $n$) to be sufficiently large when required and we will make no attempts to optimise the constants in our arguments.  We will show that the graph $G_{max}$ is isomorphic to $H_n$. As it turns out, Theorem \ref{stab} (the stability result) will follow almost immediately from the arguments required for the proof of Theorem \ref{main}. Therefore, in this section several lemmas are proved in more generality than is needed for the proof of Theorem \ref{main}: they will be used in their more general form in the next section. 

Given a graph $H$, let $f(H)$ denote the number of induced cycles in $H$ and for a vertex $v \in H$, let $f_v(H)$ denote the number of induced cycles in $H$ containing $v$. Observe that we have:

\begin{equation}\label{leastB}
f(G_{max}) = m(n) \ge f(H_n) \ge \begin{cases}
    3^{n/3}& \text{if } n \equiv 0 \text{ modulo } 3\\
    4\cdot3^{(n-4)/3}& \text{if } n \equiv 1 \text{ modulo } 3\\
    2\cdot3^{(n-2)/3}& \text{if } n \equiv 2 \text{ modulo } 3.\\
\end{cases}
\end{equation}

Any $G_{max}$ is connected (if it were disconnected we could add edges between two components to increase the number of induced cycles).  We begin by proving several lemmas which determine information about the structure of $G_{max}$.

\begin{lem}\label{upperv}
Let $F$ be an $n$-vertex graph. For $v \in V(F)$, we have $f_v(F) \le \binom{d(v)}{2}3^{(n- d(v) -1)/3}$.
\end{lem}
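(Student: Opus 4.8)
The plan is to fix a vertex $v \in V(F)$ with $d := d(v) = |N(v)|$ and count induced cycles through $v$ by choosing the two neighbours of $v$ on the cycle and then counting the number of ways to complete the cycle with an induced path. First I would note that any induced cycle $C$ through $v$ meets $N(v)$ in exactly two vertices, say $a, b$ (if it met $N(v)$ in a third vertex, that vertex would be a chord). So $C \setminus \{v\}$ is an induced path in $F - v$ from $a$ to $b$ whose interior avoids $N(v)$ entirely. Thus
$$ f_v(F) \le \sum_{\{a,b\} \subseteq N(v)} \#\{\text{induced paths in } F - v \text{ from } a \text{ to } b \text{ with interior disjoint from } N(v)\}. $$
There are $\binom{d}{2}$ choices of the pair $\{a,b\}$, so it suffices to bound, for each such pair, the number of such paths by $3^{(n-d-1)/3}$.

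For a fixed pair $\{a,b\}$, I would apply (the machinery behind) Theorem \ref{thm:path} — more precisely Lemma \ref{pathtree}(i) — to the graph $F' := F - (N(v) \setminus \{a,b\})$, which has $n - d - 1 + 2 = n - d + 1$ vertices. Induced $a$--$b$ paths in $F'$ whose interiors avoid all of $N(v)$ are in particular induced $a$--$b$ paths in $F'$ (deleting the other neighbours of $v$ can only help: any such path uses no deleted vertex), and conversely every induced $a$--$b$ path in $F'$ has interior disjoint from $N(v)$ since those vertices were removed. Hence the count for the pair $\{a,b\}$ is at most $p_2(F';a,b) \le f_2(|V(F')|) = f_2(n-d+1)$. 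Since $f_2(m) \le 3^{(m-2)/3}$ for every $m$ (inspecting the three cases in the definition of $f_2$: $3^{(m-2)/3}$, $4\cdot 3^{(m-6)/3} = (4/9)\,3^{(m-2)/3} < 3^{(m-2)/3}$, and $2\cdot 3^{(m-4)/3} = (2/3)\,3^{(m-2)/3} < 3^{(m-2)/3}$), we get the count for each pair is at most $f_2(n-d+1) \le 3^{(n-d-1)/3}$, and multiplying by $\binom{d}{2}$ gives the claim.

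The one technical wrinkle I would be careful about is the hypothesis $y \notin N[x]$ (equivalently $ab \notin E(F')$) needed to invoke the $x$-$y$-path tree and Lemma \ref{pathtree}: if $a$ and $b$ are adjacent in $F$, then the only induced cycle through $v$ using the pair $\{a,b\}$ is the triangle $vab$, contributing just $1 \le 3^{(n-d-1)/3}$ for that pair (valid as long as $n \ge d+1$, which holds), so that case is handled trivially and separately. Similarly the degenerate cases $d(v) \le 1$ give $f_v(F) = 0 = \binom{d(v)}{2}3^{(\cdots)}$. So the main substance is just the reduction to Lemma \ref{pathtree}, and the only "obstacle" is bookkeeping the adjacent-endpoints case and checking the elementary inequality $f_2(m) \le 3^{(m-2)/3}$; there is no deep difficulty here, as all the real work was done in Section \ref{section:path}.
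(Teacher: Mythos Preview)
Your proof is correct and follows essentially the same argument as the paper: restrict to the two neighbours of $v$ on the cycle, count induced $a$--$b$ paths in the graph on $n-d(v)+1$ vertices obtained by deleting $N[v]\setminus\{a,b\}$ (you wrote $N(v)$ in defining $F'$, but your vertex count $n-d-1+2$ shows you meant $N[v]$), and invoke Corollary~\ref{pathcor}/Lemma~\ref{pathtree}(i). The paper is terser---it omits the adjacent-endpoint and $d(v)\le 1$ edge cases and the explicit check that $f_2(m)\le 3^{(m-2)/3}$---but the substance is identical.
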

\begin{proof}
Each induced cycle containing $v$ contains exactly two neighbours of $v$. Fix a pair of vertices $u,w \in N(v)$. By Corollary \ref{pathcor} there are at most $3^{(n-d(v) - 1)/3}$ induced paths between $u$ and $w$ in $(F\backslash N[v])\cup \{u,w\}$. Thus there can be at most $3^{(n-d(v) - 1)/3}$ induced cycles in $F$ containing $\{v,u,w\}$. As there are $\binom{d(v)}{2}$ distinct pairs of neighbours of $v$, we have,
$$f_v(F) \le \binom{d(v)}{2}3^{(n-d(v)-1)/3}$$
as required.
\end{proof}

The next lemma tells us that any vertex in $G_{max}$ is contained in a constant proportion of $f(G_{max})$ induced cycles. 
\begin{lem}\label{vertex}
Let $0< c \le 1$ and let $\alpha = 0.11$. Let $F$ be an $n$-vertex graph with $f(F) \ge c\cdot m(n)$. Then:
\begin{itemize}
\item[(i)] $(1 - o(1))f(F)$ induced cycles in $F$ have length at least $\alpha n$.
\item [(ii)] $F$ contains a vertex $v$ such that $f_v(F) \ge \frac{c}{10}m(n)$.
\item [(iii)]  Every vertex $w \in V(G_{max})$ satisfies $f_w(G_{max}) \ge \frac{c}{20}m(n)$.
\end{itemize}
\end{lem}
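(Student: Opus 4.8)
The plan is to prove the three parts in order, using the counting bound of Lemma \ref{upperv} as the workhorse. For part (i), I would bound the number of \emph{short} induced cycles. An induced cycle of length $\ell$ is determined by choosing a vertex $v$, two of its neighbours, and an induced path of length $\ell-2$ between them in $F \setminus N[v]$; more crudely, summing over all vertices $v$ the quantity $\binom{d(v)}{2}3^{(n-d(v)-1)/3}$ from Lemma \ref{upperv} and restricting the path-count to paths on at most $\alpha n$ vertices, one sees that the total number of induced cycles of length at most $\alpha n$ is at most $n \cdot n^2 \cdot 3^{(\alpha n)/3 + O(1)}$ (one can get a path bound of roughly $3^{(\alpha n - 1)/3}$ times a polynomial factor by intersecting $\mathcal{F}_{\alpha n}$-type reasoning with the sum constraint, or simply by the trivial count of labelled sequences). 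Since $\alpha = 0.11 < 1$, this is $3^{(\alpha/3 + o(1))n}$, which is $o(3^{n/3}) = o(f(F))$ because $f(F) \ge c \cdot m(n) = \Theta(3^{n/3})$ by Corollary \ref{maincor}. Hence $(1-o(1))f(F)$ of the induced cycles have length at least $\alpha n$.

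For part (ii), I would count incidences between vertices and induced cycles. Each induced cycle $C$ contains $|C|$ vertices, and by part (i) all but $o(f(F))$ of them have $|C| \ge \alpha n$, so
\begin{equation*}
\sum_{v \in V(F)} f_v(F) = \sum_{C} |C| \ge (1-o(1)) \, \alpha n \, f(F).
\end{equation*}
Since there are $n$ vertices, some vertex $v$ satisfies $f_v(F) \ge (1-o(1))\alpha f(F) \ge (1-o(1)) \cdot 0.11 \cdot c \cdot m(n)$, which exceeds $\frac{c}{10} m(n)$ once $n$ is large (as $0.11 > 0.1$ with room to spare). That gives (ii).

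For part (iii), the point is that $G_{max}$ is extremal, so deleting any vertex cannot be ``too good'' for it. Fix $w \in V(G_{max})$. The cycles of $G_{max}$ \emph{not} through $w$ are exactly the cycles of $G_{max} \setminus w$, so $f(G_{max}) - f_w(G_{max}) = f(G_{max} \setminus w) \le m(n-1)$. On the other hand, taking $G_{max} \setminus w$ and adding back a vertex joined appropriately (or simply comparing $m(n)$ and $m(n-1)$ via Corollary \ref{maincor}) shows $m(n-1) \le (1 - \eta) m(n)$ for some constant $\eta > 0$ once $n$ is large — indeed $m(n)/m(n-1) \to 3^{1/3} > 1$ along each residue class, so $m(n-1) \le \tfrac{9}{10} m(n)$ say. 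Therefore $f_w(G_{max}) \ge f(G_{max}) - m(n-1) \ge c \cdot m(n) - \tfrac{9}{10} m(n)$. This is only useful when $c$ is close to $1$; for general $c$ the correct route is instead: apply part (ii) with $F = G_{max}$ to get a single high-degree-of-incidence vertex, and then use an \emph{edge-switching/symmetrisation} argument — if some vertex $w$ had $f_w(G_{max}) < \frac{c}{20} m(n)$, replace the neighbourhood of $w$ by a copy of the neighbourhood of the good vertex $v$ (i.e.\ ``twin'' $w$ onto $v$), which changes $f$ by at least $f_v(G_{max}) - f_w(G_{max}) - (\text{cycles through both }v\text{ and }w) > 0$, contradicting maximality. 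The cycles through both $v$ and $w$ are few — this is where one needs that a cycle through two fixed vertices splits into two paths, each counted by Corollary \ref{pathcor}, giving a bound of $O(n^4 3^{(n-2)/3}) \cdot (\text{something} \ll 3^{n/3})$; more carefully one localises to show it is $o(m(n))$.

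The main obstacle I expect is part (iii): getting the switching argument to genuinely produce a net \emph{gain} in the number of induced cycles requires controlling the cycles destroyed (those using $w$ with its old neighbourhood but not compatible with the new one) against those created, and in particular bounding the cycles through \emph{both} $v$ and $w$ by $o(m(n))$ and arguing the copied-neighbourhood vertex inherits almost all of $f_v$. Handling the case where $v$ and $w$ are adjacent, or share neighbours, needs a little care. Parts (i) and (ii) are routine given Lemma \ref{upperv} and Corollary \ref{maincor}; part (iii) is where the extremality of $G_{max}$ must be exploited and is the crux.
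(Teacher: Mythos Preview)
Your overall strategy is correct and matches the paper's, but two points deserve comment.

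\textbf{Part (i).} The paper uses a much simpler bound than your Lemma~\ref{upperv} route: an induced cycle is determined by its vertex set, so the number of induced cycles of length at most $\alpha n$ is at most $\sum_{i \le \alpha n}\binom{n}{i}$, and Stirling gives this is $o(3^{n/3})$ precisely because $\alpha^{-\alpha}(1-\alpha)^{-(1-\alpha)} < 3^{1/3}$ when $\alpha = 0.11$. Your approach can be made to work but is more circuitous, and note that invoking Corollary~\ref{maincor} here is circular (it is a consequence of Theorem~\ref{main}, whose proof uses this lemma); you only need the lower bound $m(n) \ge f(H_n) = \Omega(3^{n/3})$ from (\ref{leastB}).

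\textbf{Part (iii).} You have the right idea (twin $w$ onto the good vertex $v$ and contradict maximality), but the ``main obstacle'' you flag is not an obstacle at all. The cycles through both $v$ and $w$ are a subset of the cycles through $w$, hence number at most $f_w(G_{\max}) < \frac{c}{20}m(n)$ by hypothesis --- no separate $o(m(n))$ estimate is needed. The paper's argument is two lines: form $G'$ from $G_{\max}$ by duplicating $v$ and deleting $w$; then
\[
f(G') \ge f(G_{\max}) + f_v(G_{\max}) - 2f_w(G_{\max}) > f(G_{\max}),
\]
since $f_v \ge \frac{c}{10}m(n)$ from (ii) and $f_w < \frac{c}{20}m(n)$. (The factor $2$ absorbs both the deleted cycles through $w$ and the cycles through both $v$ and $w$ that are not doubled.) There is no need to worry about adjacency of $v$ and $w$ or to bound $f_{v,w}$ independently.
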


\begin{proof}
$F$ contains at most $\sum_{i = 1}^{\lfloor \alpha n \rfloor} \binom{n}{i}$ induced cycles of length at most $\alpha n$ (for any $W \subseteq V(F)$, there exists at most one induced cycle $C$ such that $V(C) = W$). Using Stirling's approximation, we get 

$$\sum_{i = 1}^{\lfloor \alpha n \rfloor} \binom{n}{i} \le \alpha n\cdot \binom{n}{\alpha n} \le \left(1 + o(1)\right) \frac{\sqrt{\alpha n}}{\sqrt{2 \pi (1 - \alpha)}} \left[\frac{1}{\alpha^{\alpha}(1-\alpha)^{1- \alpha}}\right]^n. $$
As
$$ \frac{1}{\alpha^{\alpha}(1-\alpha)^{1- \alpha}} < 3^{1/3},$$ 
we get 
$$ \sum_{i = 1}^{\lfloor \alpha n \rfloor} \binom{n}{i} = o\left(3 ^{n/3}\right).$$
By (\ref{leastB}), $f(F) = \Omega(3^{n/3})$, so $(1-o(1))f(F)$ induced cycles in $F$ have length at least $\alpha n$, as required for (i).

Provided $n_0$ is sufficiently large, we have for all $n > n_0$,
$$\sum_{i = 1}^{\lfloor \alpha n \rfloor} \binom{n}{i} < \frac{c}{1000}\cdot 3^{n/3} < \frac{c}{100}m(n),$$
where the second inequality follows from (\ref{leastB}). Thus there exists a vertex $v$ such that 
$$f_v(F) \ge \frac{99 \alpha c}{100}m(n) \ge \frac{c}{10}m(n),$$
proving (ii).

Now suppose that there exists some vertex $w \in V(G_{max})$ with $f_w(G_{max}) < \frac{c}{20}m(n)$. Consider the graph $G'$ obtained from $G_{max}$ by duplicating the vertex $v$ and removing the vertex $w$. We have that
$$f(G') \ge f(G_{max}) + f_v(G_{max}) - 2f_w(G_{max}) > f(G_{max}),$$
a contradiction. This proves (iii).
\end{proof}

Now consider the graph obtained from $G_{max}$ by duplicating a vertex. By applying Lemma \ref{vertex}, we have
\begin{equation}\label{nvsn+1}
m(n+1) \ge \left(1 + \frac{1}{20}\right)m(n).
\end{equation}

We now use this to show that the graph $G_{max}$ has maximum degree bounded by a constant. 

\begin{lem}\label{deg}
$\Delta(G_{max}) < 30$. 
\end{lem}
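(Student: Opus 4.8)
\textbf{Proof proposal for Lemma \ref{deg}.}

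The plan is to use Lemma \ref{upperv} to bound $f_v(G_{max})$ from above in terms of $d(v)$, and compare with the lower bound on $f(G_{max})$ coming from Lemma \ref{vertex}(iii). Suppose for contradiction that some vertex $v$ has $d := d(v) \ge 30$. By Lemma \ref{vertex}(iii) (applied with $c=1$, since $f(G_{max}) = m(n) \ge m(n)$), every vertex of $G_{max}$ — in particular $v$ — lies in at least $\frac{1}{20}m(n)$ induced cycles, so $f_v(G_{max}) \ge \frac{1}{20}m(n)$. On the other hand, Lemma \ref{upperv} gives $f_v(G_{max}) \le \binom{d}{2} 3^{(n-d-1)/3}$. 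Combining these and using the lower bound $m(n) \ge 3^{(n-2)/3}$ (valid in all residue classes, from (\ref{leastB})), we obtain
\[
\frac{1}{20} \cdot 3^{(n-2)/3} \le \binom{d}{2} 3^{(n-d-1)/3},
\]
which rearranges to $3^{(d-1)/3} \le 20 \binom{d}{2} \le 20 d^2/2 = 10 d^2$. This inequality fails for all sufficiently large $d$ (the left side grows exponentially, the right side polynomially), and a direct check shows it already fails at $d = 30$: $3^{29/3} = 3^{9} \cdot 3^{2/3} > 19683 \cdot 2 = 39366$, while $10 \cdot 30^2 = 9000$. Hence $d(v) < 30$ for every $v$, i.e. $\Delta(G_{max}) < 30$.

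I should double-check the arithmetic in the final numerical comparison, since the whole argument reduces to it: one needs $3^{(d-1)/3} > 10d^2$ to hold at $d=30$, and more care may be needed because the constant $\frac{1}{20}$ in Lemma \ref{vertex}(iii) and the precise form of the lower bound on $m(n)$ in each residue class could shift the threshold slightly. If the bound $3^{(n-2)/3}$ is not quite enough slack in the $n \equiv 0$ or $n \equiv 1$ cases, one can instead use the sharper values $m(n) \ge 3^{n/3}$ or $4 \cdot 3^{(n-4)/3}$ from (\ref{leastB}), which only help. Alternatively, and perhaps more robustly, one can avoid relying on the exact constant $30$ by first proving $\Delta(G_{max}) = O(1)$ (any bound suffices for the structural arguments that follow) and then tightening: since $f_v \le \binom{d(v)}{2}3^{(n-d(v)-1)/3}$ is decreasing in $d(v)$ for $d(v) \ge 3$, and $f_v \ge \frac{1}{20}m(n) \ge \frac{1}{20}3^{(n-2)/3}$, any $d(v)$ with $\binom{d(v)}{2}3^{-(d(v)+1)/3} < \frac{1}{20}3^{-2/3}$ is excluded.

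The main obstacle, such as it is, is purely the numerical bookkeeping: making sure the exponential term $3^{(d-1)/3}$ genuinely dominates $20\binom{d}{2}$ at the claimed cutoff, with the correct constants carried through from (\ref{leastB}) and Lemma \ref{vertex}(iii). There is no conceptual difficulty — the lemma is an immediate consequence of the two preceding lemmas once one observes that Lemma \ref{upperv} forces high-degree vertices to lie in comparatively few induced cycles, contradicting the fact that in the extremal graph every vertex must carry a constant fraction of all induced cycles. Since the paper explicitly states it makes no attempt to optimise constants, a generous cushion in the inequality is all that is required, and $30$ is comfortably safe.
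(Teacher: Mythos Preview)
Your proof is correct and follows essentially the same approach as the paper: both combine the upper bound $f_v \le \binom{d(v)}{2}3^{(n-d(v)-1)/3}$ from Lemma~\ref{upperv} with the fact (from Lemma~\ref{vertex}) that every vertex of $G_{max}$ lies in a constant fraction of $m(n)$ induced cycles, and then check the resulting inequality fails at $d=30$. The only cosmetic difference is that the paper routes the second ingredient through the deletion inequality $m(n)\le m(n-1)+f_v$ together with \eqref{nvsn+1}, whereas you invoke Lemma~\ref{vertex}(iii) directly; these yield the same inequality up to the constant $20$ versus $21$.
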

\begin{proof}
Let $v$ be a vertex of maximal degree. Given $v$, split the induced cycles in $G_{max}$ into those contained in $G_{max} \backslash \{v\}$, and those containing $v$. Using Lemma \ref{upperv} we have
$$m(n) \le m(n-1) +\binom{d(v)}{2}3^{(n-d(v)-1)/3}.$$
Using (\ref{nvsn+1}) to bound $m(n-1)$ gives,
$$m(n) \le m(n)\left(1 + \frac{1}{20}\right)^{-1} +\binom{d(v)}{2}3^{(n-d(v)-1)/3}.$$
This expression rearranges to give
$$m(n) \le 21\binom{d(v)}{2}3^{(n -d(v) - 1)/3}.$$
For $d(v) \ge 30$, this implies $m(n) < 3^{(n-6)/3},$ a contradiction (as $m(n) \ge f(H_n) > 3^{(n-6)/3}$).
\end{proof}

Combining Lemma \ref{upperv} with Lemma \ref{deg} shows, for any $v \in V(G_{max})$, we have $f_v(G_{max}) \le \binom{30}{2}3^{(n-3)/3}$. By Lemma \ref{vertex} (i), we know $(1 - o(1))m(n)$ induced cycles in $G_{max}$ have length at least $0.11 n$. Thus 
$$ (1 - o(1))m(n) \le \frac{1}{0.11 n} \sum_{v \in V(G)}f_v(G_{max}) = O(3^{n/3}).$$
This along with (\ref{leastB}) implies 
\begin{equation}\label{mn}
m(n) = \Theta(3^{n/3}).
\end{equation} 

The next stage of our proof involves showing that all but a constant number of vertices $v$ in our graph have the property that their closed second neighbourhood $N^2[v]$ has the same local structure as the closed second neighbourhood of a vertex in $H_n$.  We introduce some preliminary definitions.

Given a graph $F$ and a set $S \subseteq V(F)$, we say that a vertex $v \in V(F)$ is \emph{seen by} $S$ if $v \in N[S]$ and $v$ is \emph{unseen by} $S$ otherwise. Given a subgraph $H \subseteq F$, we say $v$ is seen by $H$ if $v \in N[H]$. When it is clear which set/subgraph we are referring to, we will just say $v$ is (un)seen. For a vertex $v \in V(G)$, let $N^i(v)$ be the set of points at distance exactly $i$ from $v$. Define $N^k[v]$ to be the set of points within distance $k$ of $v$ (for example $N^3[v] = \{v\} \cup \bigcup_{i = 1}^3 N^i(v)$ and $N^1[v]:= N[v]$). 

In order to determine what sort of local structure a `typical' vertex in $G_{max}$ should have, we define a game on $F$.

\begin{defn}\label{typG}
Let $F$ be a finite graph, let $v \in V(F)$ and let $w \in V(F)\backslash N^4[v]$. We define the $w$-\emph{typical-game} on $(F,v)$ as follows. There are two players, \emph{Adversary} and \emph{Builder}. The game starts at vertex $u_1 := v$ and the players choose a sequence of vertices $\{u_2, \ldots, u_k\}$ under the following set of rules. At vertex $u_i$:
\begin{itemize}
\item If $u_i \in N^4[w]$, then Adversary is the \emph{active player}, otherwise Builder is.
\item The active player chooses a neighbour $u_{i+1}$ of $u_i$ that is unseen by $\{u_1, \ldots, u_{i-1}\}$.
\item The game terminates when a vertex $u_j$ is chosen such that $u_j$ has no neighbours unseen by $\{u_1, \ldots, u_{j-1}\}$.
\item If, for some $j$, the vertex $u_j$ does not have exactly 3 neighbours unseen by $\{u_1, \ldots, u_{j-1}\}$, we call $u_j$ \emph{bad}; we call $u_j$ \emph{good} otherwise.
\item Adversary wins if either:
\begin{itemize}
\item for some $j$, the vertex $u_j$ is in $N^4[w]$ and is bad; or 
\item upon termination of the game at vertex $u_k$, there exists a vertex in $N^4[w]$ that is unseen by $\{u_1,\ldots, u_k\}$. 
\end{itemize}
Builder wins otherwise. 
\end{itemize} 

\end{defn}

A vertex $w\in V(F) \backslash N^4[v]$ is \emph{v-typical} in $F$ if there exists a winning strategy for Builder in the $w$-typical-game on $(F,v)$. A vertex is \emph{v-atypical} otherwise. When it is clear which vertex has been chosen to play the role of $v$, we simply say that $w$ is (a)typical. Note that the set of vertices $\{u_1,\ldots, u_k\}$ chosen during the game induces a path in $F$. Also, if we play this game on $H_n$ starting at any vertex, most of the chosen vertices are good. 

The next lemma shows a $v$-typical vertex has the required local structure (see Figure~\ref{typfig}).

\begin{lem}\label{structure}
Let $F$ be a graph and let $v$ be a vertex in $F$. Suppose that $z =z_1 \in V(F) \backslash N^4[v]$ is a $v$-typical vertex. Then there exist disjoint sets of vertices $Z:= \{z_1,z_2,z_3\}, V:= \{v_1,v_2,v_3\},$ and $W:= \{w_1,w_2,w_3\}$ such that:
\begin{itemize}
\item [(i)] for $i,j \in \{1,2,3\}$, we have $N(v_i) \cap N(w_j) = Z$;
\item [(ii)] for all $i$, we have $V \cup W \subseteq N(z_i) \subseteq V \cup W \cup Z$; 
\item [(iii)] there are no edges between $V$ and $W$.
\end{itemize}
\end{lem}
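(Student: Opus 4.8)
\textbf{Proof plan for Lemma \ref{structure}.}

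The plan is to extract the desired three-cluster structure directly from a winning strategy for Builder in the $z$-typical-game on $(F,v)$. Since $z = z_1 \notin N^4[v]$, the game starts far from $z$ and Builder controls every move until the play reaches $N^4[z]$; once inside $N^4[z]$, Adversary takes over, but Builder's strategy is winning, which guarantees two things: every vertex Adversary is forced to play while in $N^4[z]$ is \emph{good} (has exactly $3$ unseen neighbours), and when the game terminates no vertex of $N^4[z]$ is left unseen. First I would run Builder's winning strategy and look at the first time the path of chosen vertices $u_1 = v, u_2, \ldots$ enters $N^4[z]$; let $u_r$ be the first such vertex. By the "no unseen vertex of $N^4[z]$ remains" condition and the fact that the $u_i$ induce a path in $F$, the segment of the path lying in $N^4[z]$ together with Adversary's forced choices must sweep out all of $N^3[z]$ (in particular all of $N^1[z]$ and $N^2[z]$), and along the way Adversary stands on good vertices, so in particular $z_1$ itself — reached as some $u_s$ with $s \ge r$ — is good and has exactly three neighbours unseen at that point. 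Call these three neighbours $v_1, v_2, v_3$ and set $V := \{v_1,v_2,v_3\}$.

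Next I would continue the play from $z_1$: Adversary is active at $z_1$ (it lies in $N^4[z]$), picks one of $v_1,v_2,v_3$, say $v_1$, and $v_1 \in N^4[z]$ as well, so Adversary is active there too and $v_1$ is good. The key point is that the path cannot "escape" $N^2[z]$ prematurely while unseen vertices of $N^2[z]$ remain: any neighbour of $v_1$ outside $N^1[z] \cup N^2[z]$ would be at distance $\ge 3$ from $z$ and hence already... no — I would instead argue via the "must see all of $N^4[z]$" condition combined with counting. Each $v_i$ is adjacent to $z_1$; by genericity of which neighbour Adversary picks (Builder's strategy must beat \emph{every} Adversary), each $v_i$ when visited is good with exactly three unseen neighbours. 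Among these unseen neighbours, $z_2, z_3$ (the other two vertices of $N^1[z]$ besides $z_1$, forced to exist and be seen) must appear, because otherwise some $v_i$ could be played so that its unseen neighbours all lie outside $N[z]$, letting Adversary strand $z_2$ or $z_3$ unseen forever — contradicting that Builder wins. This pins down $Z := \{z_1,z_2,z_3\} = N^1[z] \setminus (\text{nothing})$, more precisely the unseen-neighbour sets of the $v_i$ all equal $Z \cup \{w_i\}$ for a single extra vertex $w_i$ at distance $2$ from $z$; I collect $W := \{w_1,w_2,w_3\}$. Symmetric reasoning starting the sweep from the $W$-side (or simply noting that each $w_j$, being good and in $N^4[z]$ with $z_i$'s among its neighbours) forces $N(w_j) \cap N^1[z] = Z$ and gives (i). Then (ii) follows since $z_i$'s unseen-neighbour count is exactly $3$ at the moment it is played and $V \cup W$ are exactly the neighbours of the $z_i$ outside $Z$; and (iii) follows because an edge $v_iw_j$ would make $w_j$ seen by the time $v_i$ is played (or $v_i$ seen by $w_j$), reducing the count of unseen neighbours below $3$ and making some vertex bad, contradicting that Builder wins.

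The honest difficulty here is \textbf{bookkeeping the "seen/unseen" status along the game path}: the whole argument hinges on carefully tracking, at each move, which vertices of $N^4[z]$ have already been seen, and translating Builder's winning condition (a $\forall$-statement over Adversary strategies) into the rigidity statements — that the unseen-neighbour sets of the three $v_i$ coincide on $Z$, that the $w_i$ are distinct and lie in $N^2(z)$, and that no cross-edges exist. The cleanest way to do this is to observe that since $v \notin N^4[z]$ and $z \notin N^4[v]$, the regions $N^4[v]$ and $N^4[z]$ are disjoint enough that Builder can maneuver the path to any chosen entry point of $N^4[z]$ and can force Adversary into visiting the $v_i$'s and $w_i$'s in whatever order is convenient; running several such forced plays and comparing the unseen-neighbour sets across them yields (i)–(iii) with no further computation. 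The rest is routine: once $V, W, Z$ are identified as above, properties (i), (ii), (iii) are immediate from "good = exactly three unseen neighbours" applied at each of the six vertices $v_1,v_2,v_3,w_1,w_2,w_3$.
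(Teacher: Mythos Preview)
Your overall instinct---play the game and read off the local structure from the ``exactly three unseen neighbours'' constraint---is right, and it is exactly what the paper does. But the proposal, as written, contains two genuine gaps.

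\textbf{The roles are reversed, and the sets are misdefined.} In the $z$-typical game the \emph{Adversary} is the player active inside $N^4[z]$; since $z$ is typical, Builder has a fixed winning strategy $\sigma$, and the proof proceeds by \emph{playing as Adversary} against $\sigma$, making whatever choices we like and deducing structure from the fact that we still lose. Your plan instead speaks of ``Adversary's forced choices'' and of Builder ``forcing Adversary into visiting the $v_i$'s and $w_j$'s in whatever order is convenient'', which is backwards: Builder cannot force anything inside $N^4[z]$. More seriously, your identification of $Z$ is incoherent. You set $V$ to be the three unseen neighbours of $z_1$ at the moment $z_1$ is visited, then move to $v_1$ and claim its unseen neighbours are ``$Z \cup \{w_i\}$'' where $z_2,z_3$ are ``the other two vertices of $N^1[z]$''. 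But once $z_1$ has been visited, every vertex of $N(z_1)$ is seen, so nothing in $N^1[z_1]$ can appear among the unseen neighbours of $v_1$; and in the final structure $|N[z_1]| \ge 7$, so there are no ``other two vertices of $N^1[z]$'' to speak of. In the paper the sets are built in the opposite order: one first reaches a vertex $x \in N^2(z)$, lets $V$ be its three unseen neighbours, then $Z$ is the set of unseen neighbours of $v_1$ (with $z = z_1 \in Z$), and only then $W$ is the set of unseen neighbours of $z$. The vertices $z_2,z_3$ are not assumed to lie in $N(z)$; they emerge from the analysis as common neighbours of $V$ and $W$.

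\textbf{The hard step is missing.} The substantive difficulty, which your sketch does not address, is showing that \emph{all three} unseen neighbours of the first vertex $x \in N^2(z)$ are adjacent to $z$ (i.e.\ $N(x)\cap N(z)=V$). A priori only one of them need be. The paper handles this by a dedicated Adversary strategy: assuming some $v_3 \notin N(z)$, Adversary steps to $v_3$ and then, at each subsequent vertex, either moves into $N(z)$ and then to $z$ (if that would leave $z$ with fewer than three unseen neighbours, producing a bad vertex and a contradiction) or deliberately steps to an unseen neighbour \emph{not} adjacent to $z$, steadily decreasing the number of unseen neighbours of $z$ until the first option triggers. This is the engine of the whole lemma, and analogous arguments are then rerun (with $w_j$ in place of $z$, etc.) to obtain (i)--(iii). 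Your proposal replaces this with ``by genericity of which neighbour Adversary picks'' and ``symmetric reasoning'', which does not supply the mechanism. Without this step you cannot conclude that $|V|=|W|=|Z|=3$ with the claimed adjacency pattern.
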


\begin{figure}[h]
\centering 

\begin{tikzpicture}[thick,
  every node/.style={draw,circle}, 
  fsnode/.style={fill=black,scale=0.5},
  ssnode/.style={fill=black,scale=0.5},
  wsnode/.style={fill=black,scale=0.5},
  every fit/.style={ellipse,draw,inner sep=45pt,text width=0.1cm},
  ->,shorten >= 3pt,shorten <= 3pt
]

\begin{scope}[start chain=going below,node distance=12mm]
\foreach \i in {1,2,...,3}
  \node[fsnode,on chain] (f\i) [label={above, label distance=0.1mm: $v_{\i}$}] {};
\end{scope}

\begin{scope}[xshift=2cm,yshift=-0cm,start chain=going below,node distance=12mm]
\foreach \i in {1,2,...,3}
  \node[ssnode,on chain] (s\i) [label={above, label distance=0.1mm: $z_{\i}$}]  {};
\end{scope}

\begin{scope}[xshift=4cm,yshift=-0cm,start chain=going below,node distance=12mm]
\foreach \i in {1,2,...,3}
  \node[wsnode,on chain] (w\i) [label={above, label distance=0.1mm: $w_{\i}$}] {};
\end{scope}

\draw (f1) edge[-] (s1) (f2) edge[-] (s2) (f3) edge[-] (s3) (f1) edge[-] (s2) (f1) edge[-] (s3) (f2) edge[-] (s1) (f2) edge[-] (s3) (f3) edge[-] (s1) (f3) edge[-] (s2)
(w1) edge[-] (s1) (w2) edge[-] (s2) (w3) edge[-] (s3) (w1) edge[-] (s2) (w1) edge[-] (s3) (w2) edge[-] (s1) (w2) edge[-] (s3) (w3) edge[-] (s1) (w3) edge[-] (s2);

\draw (f2) ellipse (0.5cm and 2.5cm);
\draw (s2) ellipse (0.5cm and 2.5cm);
\draw (w2) ellipse (0.5cm and 2.5cm);

\node[draw=none,fill=none] at (0,-4.3) {$V$};
\node[draw=none,fill=none] at (2,-4.3) {$Z$};
\node[draw=none,fill=none] at (4,-4.3) {$W$};
\end{tikzpicture}
\caption{The local structure around a $v$-typical vertex $z_1$. Note that we have not yet determined the edges within the sets $V$, $Z$ and $W$.}\label{typfig}
 \end{figure}
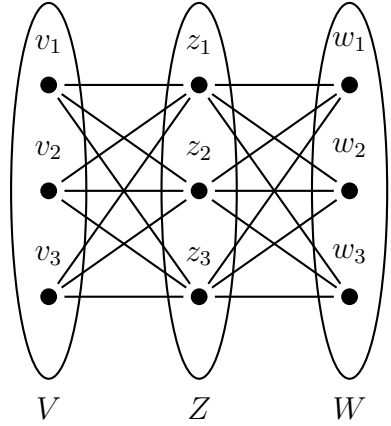

\begin{proof}
We play as Adversary in the $z$-typical-game on $(F,v)$. As $z$ is $v$-typical, Builder has a winning strategy $\sigma$. We assume that Builder uses strategy $\sigma$, and deduce information about the structure of $F$ from the results of our choices of vertices as Adversary (we know we cannot win so whatever choices we make have certain consequences). 
For each vertex $u_i$ that is chosen, let $P_{u_i}$ denote the subgraph induced by $\{u_1,\ldots, u_{i}\}$, where $u_1 = v$. So $P_{u_i}$ is an induced path between $v$ and $u_i$.

Suppose that $u_k$ is the first vertex chosen such that $u_k \in N^4(z)$ (as $z$ is typical, at some point such a vertex will be chosen). We arbitrarily choose the next two vertices $u_{k+1} \in N^3(z) \cap N(u_k)$ and $u_{k+2} \in N^2(z) \cap N(u_{k+1})$. Let $x := u_{k+2}$. This vertex is unseen by $P_{u_{k}}$ as $u_{k+1}$ was the first vertex we chose in $N^3(z)$. We also have, by choice of $u_k$, that $x \notin N(z)$. As $z$ is typical, $x$ has 3 neighbours $V := \{v_1,v_2,v_3\}$ unseen by $P_{u_{k+1}}$. As $x \in N^2(z)$, for some $i$ we have $v_i \in N(x) \cap N(z)$. Without loss of generality suppose $v_1 \in N(x) \cap N(z)$. Since we could choose $u_{k+3}$ to be $v_1$, the vertex $v_1$ has 3 neighbours unseen by $P_x$, one of which is $z$, so let the set of neighbours of $v_1$ unseen by $P_x$ be $Z:= \{z_1,z_2,z_3\}$. 

 If we choose $u_{k+3}:= v_1$ and then $u_{k+4}:= z$, we have that $z$ has 3 neighbours unseen by $P_{v_1}$. Thus $z$ has at most 5 neighbours unseen by $P_{x}$ (as $z$ could be adjacent to $z_2$ or $z_3$).

We now prove that $N(x) \cap N(z) = V$. Suppose otherwise, so without loss of generality we have $v_3 \notin N(z)$. Now we describe the set of choices we make for the remainder of the game (recall that Builder always plays by strategy $\sigma$). Choose $u_{k+3}:= v_3$. Now consider a later step in the game, but before $z$ has been chosen, and suppose the most recently chosen vertex is $u_i \in N^4[z]$, where $i \ge k+3$. Then:

\begin{itemize}
\item [(1)] If there is no vertex in $N(u_i) \cap N(z)$ that is unseen by $P_{u_{i-1}}$, choose $u_{i+1}$ arbitrarily. 
\item [(2)] If, for some $r$, the vertex $z$ has $r$ neighbours unseen by $P_{u_{i-1}}$ (by the argument above, $r \le 5$) and $u_i$ is adjacent to $j\ge 1$ of these neighbours: 
\begin{itemize}
\item [(i)] If $r-j < 3$, choose $u_{i+1} \in N(u_i) \cap N(z)$ and $u_{i+2}:=z$. As $z$ now has at most 2 neighbours unseen by $P_{u_{i+1}}$ we reach a contradiction. 
\item [(ii)] If $r-j \ge 3$, then $j \le 2$ and $u_i$ has an unseen neighbour $s$ that is not adjacent to $z$ (otherwise $u_i$ is bad, which contradicts the fact that Builder has a winning strategy). Choose $u_{i+1}:= s$. Observe that the vertex $z$ is not seen by $P_{s}$.
\end{itemize}
\end{itemize}
Once we have chosen $z$, we play arbitrarily. 

We now analyse the results of making these choices. As $z$ is typical, we will at some point enter case (2). If we are in case (2ii), we pick $u_{i+1}$ and the number of neighbours unseen by $z$ decreases, so eventually we enter case (2i) where we reach a contradiction. Thus $N(x) \cap N(z) = V$, as required.

We now know that $N(x) \cap N(z) = V$. As in the first part, if we choose $u_{k+3}:= v_1$ and $u_{k+4}:= z$, the vertex $z$ has 3 neighbours unseen by $P_{v_1}$. Call these neighbours $W:=\{w_1,w_2,w_3\}$. Thus $N(z) \supseteq V \cup W$ as required. Observe that by choice of $W$, there are no edges between $V$ and $W$, thus proving the third statement of the lemma. From the argument above we have $N(z) \subseteq V \cup W \cup Z$.

Let $w_j \in W$, for $j \in \{1,2,3\}$. We will first show that $N(v_1)\cap N(w_j) = Z$. Again we play as Adversary in a new $z$-typical-game on $(F,v)$ and assume that Builder uses the same winning strategy $\sigma$ as above. 

By playing the same strategy as in the game above, $u_{k+1}$ is the first vertex chosen in $N^3(z)$ and that the first vertex chosen in $N^2(z)$ is $x$. We know that $N(x) \cap N(z) = V$. Now we choose $u_{k+3}:= v_1$. Let $y$ be the first vertex chosen in $N^2(w_j)$. Observe that as $w_j \in N(z)$, we have $N^2(w_j) \subseteq N^3[z]$. So $y \in \{u_{k+1},x,v_1\}$. We will show that $y = v_1$.

By repeating the same argument as above (where we showed $N(x) \cap N(z) = V$) with $y$ in place of $x$ and $w_j$ in place of $z$, we see that $y$ and $w_j$ have exactly 3 common neighbours. The assumptions we needed to apply the argument above will hold here: we require that $y$ is the first vertex chosen in $N^2(w_j)$ and that we are able to make moves at vertices in $N^2[w_j]$. The latter holds as $N^2[w_j] \subseteq N^4[z]$ and, in the $z$-typical game on $(F,v)$, Adversary makes moves at vertices in $N^4[z]$. 

Suppose that $y = u_{k+1}$. Then $|N(u_{k+1}) \cap N(w_j)| = 3$ and in particular $w_j$ is adjacent to $x$. This is a contradiction as $w_j \notin V$. Now suppose that $y = x$. This implies that $w_j$ is adjacent to $v_1$. As $w_j \notin Z$, this is a contradiction. Therefore $y = v_1$ and $N(v_1) \cap N(w_j) = Z$ for each $j \in \{1,2,3\}$ (by the analogous argument to where we show $N(x) \cap N(z) = V$, with $v_1$ in place of $x$, $w_j$ in place of $z$ and $Z$ in place of $V$).

We now show that, for $i,j \in \{2,3\}$, each $v_i$ is adjacent to each $z_j$. Suppose that $v_i$ is not adjacent to $z_j$ for some $i,j \in \{2,3\}$. Again we play as Adversary in a new $z$-typical-game on $(F,v)$. We assume that Builder uses the same winning strategy $\sigma$ as above. By playing the same strategy as in both games above, the first vertex chosen in $N^2[z]$ is $x$ and $x = u_{k+2}$ in the sequence of vertices chosen. It is deduced from analogous arguments to those above that the only neighbours of $z_j$ that are unseen by $P_x$ are contained in $V \cup Z \cup W$. We know that $N(x) \cap N(z) = V$. Now choose $u_{k+3} := v_i$ and $u_{k+4}:= z$. If $z$ is adjacent to $z_j$, pick $u_{k+5}:= z_j$. Otherwise, pick $u_{k+5} := w_1$ and $u_{k+6} := z_j$. In both cases, all vertices of $W \cup V$ have been seen when $z_j$ is chosen. Therefore $z_j$ has at most one neighbour (the vertex in $Z \backslash \{z,z_j\}$) unseen by $P_{z_j}\backslash \{z_j\}$. This contradicts $z$ being typical. Thus we have $N(v_i) \cap N(w_j) = Z$, completing the proof of (i).

It remains to show that $N(z_i) \subseteq V \cup W \cup Z$ for $i \in \{2,3\}$. Suppose otherwise, that $z_i$ has a neighbour $u \notin V \cup Z \cup W$. By the above arguments, $u$ is seen by $P_x$. $u$ is not a neighbour of $x$, and as $u \in N^3[z]$, $u$ is either a neighbour of $u_k$ or $u_{k+1}$. Now consider a new $z$-typical game on $(F,v)$. Builder still plays by the winning strategy $\sigma$. This time, we play as before until we are the first vertex $u'$ which is a neighbour of $u$ (so $u'$ is either $u_k$ or $u_{k+1}$). At $u'$, we choose $u$ and then $z_i$. Observe that $N(z_i)\supseteq W \cup V$. By the arguments above, no vertices in $W \cup V$ are seen by $P_{u'}$. As $z_i$ is typical it has exactly 3 neighbours unseen by $P_{u}$, this implies that at least 3 vertices of $W \cup V$ are neighbours of $u$. However, this means that $u$ has 4 neighbours unseen by $P_{u'}$ (these 3 and $z_i$), contradicting $z$ being typical. Thus  for $i \in \{2,3\}$, $N(z_i) \subseteq V \cup W \cup Z$. This completes the proof of the lemma.

\end{proof}

We will now show that, for any $v \in V(G_{max})$, all but a bounded number of vertices in $G_{max}$ are $v$-typical. We do this in the following manner. For each vertex $v$ in $G_{max}$ we define a tree $T(v)$ that will `explore' the graph $G_{max}$ outwards from $v$. As we will see, leaves on this tree will correspond to induced paths or cycles in $G_{max}$ containing $v$. Every vertex on $T$ represents some vertex in $G_{max}$ (and many vertices in $T$ may represent the same vertex of $G_{max}$). Our proof proceeds by showing that $T$ has a particular structure, which in turn implies conditions on the structure of $G_{max}$. 

The next definition contains similar concepts to those introduced in the definition of an $x$-$y$-path-tree in Section \ref{section:path}. The main difference is that previously the vertex $y$ played a special role in the creation of leaves of our tree. Now there is no such significant vertex. 

\begin{defn}\label{etree}
For $F$ a finite graph and $v \in V(F)$, the \emph{exploration tree from v} is a tree $T = T(v)$ together with a function $t:V(T) \rightarrow V(F)$ defined as follows.
\begin{itemize}
 \item $T$ is a tree with vertex set $V(T)$ disjoint from $V(F)$. 

 \item The vertices in $T$ correspond to sets $S:= \{v,v_1,\ldots,v_j\} \subseteq V(F)$ such that $F[S]$ is an induced path $v,v_1,\ldots,v_j$ or induced cycle $v,v_1,\ldots,v_j,v$. For each $S$ where $F[S]$ is an induced path, define a vertex $w_S \in T$ and set $t(w_S) = v_j$. For each $S$ where $F[S]$ is an induced cycle (where an edge is not considered to be a cycle), define two vertices $w_S^1,w_S^2 \in T$ and set $t(w_S^1) = v_1$ and $t(w_S^2) = v_j$. We call $S$ the \emph{F-set} of $w$ (or $F$-set of $w_S^1$ and $w_S^2$, if $F[S]$ is a cycle). These vertices are the only vertices in $T$. Define the root of the tree to be $v_0:=w_{x}$.
 \item Given a vertex $w \in V(T)$ with $F$-set $\{v,v_1,\ldots,v_j\}$ we define $C(w)$, the \emph{children} of $w$, to be the set of vertices in $T$ whose $F$-set is $\{v,v_1,\ldots,v_j,z\}$ for some $z \in V(F)$. Define $N_T(v_0):= C(v_0)$. For $w \in V(T)\backslash \{v_0\}$ define $N_T(w):= C(w) \cup \{u\}$, where $u$ is the unique vertex in $T$ with $F$-set $\{v,v_1,\ldots,v_{j-1}\}$. (So two vertices are adjacent in $T$ precisely when one of their $F$-sets extends the other by one vertex. A vertex whose $F$-set induces a cycle in $F$ will be a leaf of $T$. ) 
\end{itemize}
We write $t(S):= \{t(x): x \in S\}$ for any subset $S \subseteq V(T)$ and $t(H):= \{t(x): x \in V(H)\}$ for any subgraph $H \subseteq T$. Given a set $P \subseteq V(T)$ we say that it \emph{sees} a vertex $w \in V(F)$ if $w \in N_F[t(P)]$. If $w \notin N_F[t(P)]$ we say $w$ is \emph{unseen} by $P$. Note that if some set $P$ sees $w$ then there exists $u \in N_T[P]$ such that $t(u) = w$. 
\end{defn}

As in Definition \ref{treestuff}, define a \emph{branch} and $L(u)$ for $u \in V(T)$ with respect to this tree. We now describe a correspondence between certain leaves on $T$ and induced cycles in $F$. For $z \in T$, let $L(z)$ be the number of leaves of $T$ contained in $B(z)$.

See Figure \ref{extree} for an example of an exploration tree.

\begin{figure}[H]
\centering

 \begin{tikzpicture}[node distance=2.5cm,main node/.style={minimum size = .7cm,circle,fill=white!20,draw}, b node/.style={circle,fill=white!20,draw}, scale=1.5, every node/.style={scale=0.8}]   
 
 \node[main node] (x)at (-1,0) [fill=black, scale=0.5, label={above, label distance=0.1mm: $x$}] {}; 
 \node[main node] (y)at (1,0) [fill=black, scale=0.5, label={above, label distance=0.1mm: $y$}] {}; 
 \node[main node] (v1)at (-1,-1) [fill=black, scale=0.5, label={below, label distance=0.1mm: $v_1$}] {};
 \node[main node] (z1)at (1,-1) [fill=black, scale=0.5, label={below, label distance=0.1mm: $z_1$}] {}; 
 \node[main node] (v2)at (-2,-2) [fill=black, scale=0.5, label={left, label distance=0.1mm: $v_2$}] {};
 \node[main node] (z2)at (2,-2) [fill=black, scale=0.5, label={right, label distance=0.1mm: $z_2$}] {}; 

 \draw (x) edge (y) (x) edge (v1) (x) edge (v2)
 
 (z1) edge (v1) (z1) edge (v2) (z1) edge (y)
 
 (z2) edge (y) (z2) edge (v1) (z2) edge (v2);

 \node[main node] (a1)at (0,-3)[circle,fill=white!20, minimum size=.9cm] {$v_1$};
 \node[main node] (b1)at (-3,-4)[circle,fill=white!20, minimum size=.9cm]{$z_1$};
 \node[main node] (b2)at (0,-4)[circle,fill=white!20, minimum size=.9cm] {$z_2$};
 \node[main node] (x)at (3,-4) [circle,fill=white!20, minimum size=.9cm]{$x$};
 
 \node[main node] (z11)at (-3.5,-5) [circle,fill=white!20, minimum size=.9cm] {$y$};
 \node[main node] (z21)at (-2.5,-5)[circle,fill=white!20, minimum size=.9cm] {$v_2$};
 \node[main node] (z12)at (-0.5,-5) [circle,fill=white!20, minimum size=.9cm] {$y$};
 \node[main node] (z22)at (0.5,-5)[circle,fill=white!20, minimum size=.9cm] {$v_2$};
 \node[main node] (z13)at (2.5,-5) [circle,fill=white!20, minimum size=.9cm] {$y$};
 \node[main node] (z23)at (3.5,-5)[circle,fill=white!20, minimum size=.9cm] {$v_2$};
 
 \node[main node] (w12)at (-4,-6)[circle,fill=white!20, minimum size=.9cm] {$x$};
 \node[main node] (w22)at (-3.5,-6)[circle,fill=white!20, minimum size=.9cm] {$z_2$};
 \node[main node] (w13)at (-2.5,-6)[circle,fill=white!20, minimum size=.9cm] {$x$};
 \node[main node] (w14)at (-2,-6)[circle,fill=white!20, minimum size=.9cm] {$z_2$};

 \node[main node] (w15)at (-1,-6)[circle,fill=white!20, minimum size=.9cm] {$x$};
 \node[main node] (w25)at (-0.5,-6)[circle,fill=white!20, minimum size=.9cm] {$z_1$};
 
 \node[main node] (y12)at (0.5,-6)[circle,fill=white!20, minimum size=.9cm] {$x$};
 \node[main node] (y22)at (1,-6)[circle,fill=white!20, minimum size=.9cm] {$z_1$};

 \node[main node] (y23)at (2,-6)[circle,fill=white!20, minimum size=.9cm] {$z_1$};
 \node[main node] (y14)at (2.5,-6)[circle,fill=white!20, minimum size=.9cm] {$z_2$};

 \node[main node] (y15)at (3.5,-6)[circle,fill=white!20, minimum size=.9cm] {$z_1$};
 \node[main node] (y25)at (4,-6)[circle,fill=white!20, minimum size=.9cm] {$z_2$};
 
\draw (a1) edge (b1) edge (b2) edge (x)

(b1) edge (z11) edge (z21)
(b2) edge (z12) edge (z22)
(x) edge (z13) edge (z23)
 (z11) edge (w12) edge (w22)
 (z21) edge (w13) edge (w14)
 (z12) edge (w15) edge (w25)
 (z22) edge (y12) edge (y22)
 (z13) edge (y23) edge (y14)
 (z23) edge (y15) edge (y25);
 
 \end{tikzpicture}
\caption{A graph and its exploration tree (from $v_1$). Each vertex $w$ in the tree is labelled by $t(w)$.}
\label{extree}
\end{figure}

\begin{lem}
\label{leaves}
For $F$ a finite graph and $v \in V(F)$, let $T$ be the exploration tree from $v$ rooted at $v_0$. We have
$f_v(F) \le \frac{1}{2}L(v_0).$
\end{lem}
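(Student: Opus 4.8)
The plan is to show directly that $T$ has at least $2f_v(F)$ leaves; since the branch rooted at $v_0$ is all of $T$, this reads $L(v_0)\ge 2f_v(F)$, which rearranges to the claimed bound. So the entire proof is a bookkeeping of Definition \ref{etree}, and the crux is the observation that each induced cycle through $v$ is ``charged'' to exactly two leaves of $T$.

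First I would extract the relevant facts from Definition \ref{etree}. For an induced cycle $C$ through $v$, write its vertex set as $S_C$, so that $F[S_C]$ is an induced cycle $v,v_1,\ldots,v_j,v$. Definition \ref{etree} then places in $T$ exactly two vertices $w^1_{S_C}$ and $w^2_{S_C}$ whose $F$-set is $S_C$, with $t(w^1_{S_C})=v_1$ and $t(w^2_{S_C})=v_j$ (the two neighbours of $v$ on $C$); moreover, as stated in Definition \ref{etree}, any vertex of $T$ whose $F$-set induces a cycle is a leaf of $T$ (one cannot extend an induced cycle by a further vertex to an induced path or cycle, so such a vertex has no children). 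Hence each induced cycle through $v$ contributes two leaves of $T$.

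Next I would check that this charging double-counts nothing. Distinct induced cycles through $v$ have distinct vertex sets, hence distinct $F$-sets, so the associated pairs of tree-vertices are disjoint; and within a single cycle the vertices $w^1_{S_C},w^2_{S_C}$ are distinct by construction. Therefore the assignment $C\mapsto\{w^1_{S_C},w^2_{S_C}\}$ yields $2f_v(F)$ pairwise distinct leaves of $T$, giving $L(v_0)\ge 2f_v(F)$ and so $f_v(F)\le\tfrac12 L(v_0)$.

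I do not expect a genuine obstacle here: the only points that need care are that a cycle $F$-set vertex is always a leaf, that every induced cycle through $v$ supplies exactly two such leaves, and that distinct cycles supply disjoint pairs — all immediate from Definition \ref{etree}. (If desired one could remark, though it is not needed, that the inequality is typically strict, the slack being accounted for by the leaves of $T$ whose $F$-sets are maximal induced paths from $v$ that fail to close up into a cycle.)
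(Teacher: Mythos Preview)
Your proposal is correct and follows exactly the same approach as the paper: each induced cycle through $v$ yields two distinct leaves of $T$ by Definition~\ref{etree}, so $L(v_0)\ge 2f_v(F)$. The paper's own proof is a terse two-line version of your argument, omitting the explicit injectivity check you spell out.
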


\begin{proof}
In the construction of $T$, for every induced cycle containing $v$ in $F$ we define two vertices of $T$. Again by construction, these two vertices are leaves of $T$. The result follows. 
\end{proof}

 Call a vertex $v \in T$ is \emph{good} if it has exactly three children: call it \emph{bad} otherwise. We now define a game on $T$, as we did previously in this section for a graph. We use the game to define vertices that are `(a)typical' for $T$. The following definition is the analogue in $T$ of Definition \ref{typG} for a graph.

\begin{defn}
Let $F$ be a finite graph and let $T$ be the exploration tree from $v$ in $F$. Let $w$ be a vertex in $V(F)\backslash N^4[v]$. We define the $w$-\emph{typical-game} on $T$ as follows. There are two players, Adversary and Builder. The game starts at vertex $u_0 := v_0$ ($v_0 = t(v))$ and the players choose a sequence of vertices $\{u_1,u_2, \ldots, u_k\}\subseteq V(T)$ under the following set of rules. At vertex $u_i$:
\begin{itemize}
\item If $t(u_i) \in N^4[w]$, then Adversary is the \emph{active player}, otherwise Builder is.
\item The active player chooses a child $u_{i+1}$ of $u_i$.
\item The game terminates when a vertex $u_j$ is chosen such that $u_j$ is a leaf.
\item Adversary wins if either for some $j$, we have that $t(u_j) \in N^4[w]$ and $u_j$ is bad, or if upon termination of the game at vertex $u_k$ there exists a vertex in $N^4[w]$ that is unseen by $\{u_0,\ldots, u_k\}$. Builder wins otherwise. 
\end{itemize} 
\end{defn}

A vertex $w \in V(F) \backslash N^4[v]$ is \emph{typical} for $T$ if there exists a winning strategy for Builder in the $w$-typical-game on $T$. A vertex is \emph{atypical} for $T$ otherwise. Observe that a vertex in $V(F) \backslash N^4[v]$ is atypical for $T(v)$ if and only if it is $v$-atypical in $F$. Also, note that a vertex $w$ being atypical for $T$ means that  Adversary has a strategy to ensure that, whatever strategy Builder chooses, either a bad vertex in $N^4[w]$ is chosen, or that there exists some vertex in $N^4[w]$ that remains unseen by $\{u_1,\ldots, u_k\}$ upon termination at vertex $u_k$. 

Now let $c>0$ and $F$ be any $n$-vertex graph with $f(F) \ge c \cdot 3^{n/3}$ and $\Delta(F) \le \Delta$, for some constant $\Delta$ ($G_{max}$ satisfies these conditions as $\Delta (G_{max})$ is bounded by Lemma \ref{deg}). Our next aim is to prove that, for any vertex $v \in V(F)$, only a bounded number of vertices are atypical for $T(v)$. Using this fact with Lemma \ref{structure} implies that the majority of the structure of $G_{max}$ is close to the structure of $H_n$. The remainder of the proof consists of `cleaning' $G_{max}$ to show that it is in fact isomorphic to $H_n$. 

We first outline how the proof will proceed before giving the details. We assume (in order to get a contradiction) that there is a large set $A \subseteq V(F)$ of vertices atypical for $T(v)$, such that for each $a, a' \in A$ we have $N^4[a]\cap N^4[a'] = \emptyset$ (for any set of atypical vertices there exists a subset of constant proportion with this property as $\Delta(F)$ is bounded). We will sequentially choose a path in $T$ of vertices $u_0, \ldots, u_k$ where $u_0 := v_0$ and $u_k$ is a leaf. 

For each $a \in A$, there exists a winning strategy $\tau_a$ for Adversary in the $a$-typical game on $T(v)$. This means that whatever vertices $u_i$ with $t(u_i) \notin \bigcup_{a \in A} N^4[a]$, are chosen in the path, for every $a \in A$ we are able to ensure that either:
\begin{itemize}
\item [(i)] we choose a bad vertex $u_i$ with $t(u_i) \in N^4[a]$, or
\item [(ii)] there is some vertex in $N^4[a]$ that remains unseen by $\{u_0,\ldots, u_k\}$.
\end{itemize}

We assume at the start that $L(v_0)$ is bounded below by $c\cdot 3^{n/3}$, for some constant $c$. As we move down the tree we keep track of the number of leaves that the branch we are in contains. If we are at a vertex $u_i$, such that $t(u_i)$ is not in $N^4[a]$ for any $a \in A$, we choose the branch that has the most leaves. When $t(u_i)$ is in $N^4[a]$ for some $a \in A$, we play the winning strategy $\tau_a$ to move towards the outcomes (i) or (ii), unless there is a sub-branch that contains a large proportion of the leaves in our current branch. These outcomes mean that the tree is `unbalanced' in some way, and the strategy that achieves these outcomes picks branches that contain more leaves than average. As it turns out, when we reach a leaf and the process ends, if $|A|$ was too large we find that the branch we are in ought to contain more than one leaf, a contradiction. 

\begin{lem}\label{genatypcount}
Fix $c>0$. Let $F$ be an $n$-vertex graph with $\Delta:= \Delta(F) > 1$ and let $\epsilon := 2^{-\Delta^{100}}$. Let $v \in V(F)$ and let $T = T(v)$ be the exploration tree from $v$ in $F$ with root $v_0$.  Let $A \subseteq V(F) \backslash N^4[v]$ be a set of atypical vertices for $T$ such that for all $a, a' \in A$, we have $N^4[a] \cap N^4[a'] = \emptyset$. If $L(v_0) \ge c\cdot 3^{n/3}$, then $|A| < M$, where $M$ is the smallest integer such that $c\cdot 3^{1/3}(1+\epsilon)^M > \Delta$. 
\end{lem}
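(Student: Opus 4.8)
The plan is to argue by contradiction. Suppose $|A|\ge M$ and fix $A'\subseteq A$ with $|A'|=M$. We construct a single path $u_0=v_0,u_1,\dots,u_k$ in $T$ from the root to a leaf, and along it we monitor the leaf-count $\ell_i:=L(u_i)$. Since a leaf spans only itself, $\ell_k=1$, so the telescoping identity $\ell_0=\prod_{i=0}^{k-1} r_i$ holds with $r_i:=\ell_i/\ell_{i+1}\ge 1$, and by hypothesis $\ell_0=L(v_0)\ge c\cdot 3^{n/3}$. The aim is to bound $\prod_i r_i\le (1+\epsilon)^{-M}\,3^{(n-1)/3}$, which gives $c\cdot 3^{n/3}\le (1+\epsilon)^{-M}3^{(n-1)/3}$, i.e. $c\cdot 3^{1/3}(1+\epsilon)^M\le 1<\Delta$, contradicting the defining property of $M$.

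To build the path, write $d_i:=|C(u_i)|$ for the number of children of $u_i$ (so $d_i\le\Delta$). When $t(u_i)\notin\bigcup_{a\in A'}N^4[a]$, move to a child of maximum $L$-value, so that $r_i\le d_i$. When $t(u_i)\in N^4[a]$ for the (unique, by disjointness of the $N^4[a]$) vertex $a\in A'$, and $a$ has not yet been \emph{resolved}: if some child $u'$ has $L(u')\ge (1+\epsilon)\ell_i/3$, move to a maximum-$L$ child of $u_i$ (which is then such a child) and declare $a$ resolved, giving $r_i\le 3/(1+\epsilon)$; otherwise move according to Adversary's winning strategy $\tau_a$ in the $a$-typical game on $T$. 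Because $\tau_a$ is winning, if $a$ is never resolved by a heavy child then the play eventually either picks a bad vertex $u_i$ with $t(u_i)\in N^4[a]$ --- so $d_i\ne 3$, while the absence of a heavy child together with $\sum_{u'\in C(u_i)}L(u')=\ell_i$ forces $d_i>3/(1+\epsilon)>2$, hence $d_i\ge 4$ --- or leaves some vertex $w_a\in N^4[a]$ unseen on termination. Either way we declare $a$ resolved at that point.

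For the accounting, the argument behind Proposition~\ref{disj} (a child of $u_i$ is a neighbour of $t(u_i)$ unseen by $u_0,\dots,u_{i-1}$, so the sets $t(C(u_i))$ are pairwise disjoint subsets of $V(F)$ avoiding $v$ and every unseen $w_a$) gives $\sum_{i=0}^{k-1} d_i\le n-1-q$, where $q$ is the number of $a$ resolved by an unseen vertex. Using the elementary fact $d\le 3^{d/3}$ for every positive integer $d$, one gets a baseline bound; the point is that each $a\in A'$ buys a genuine saving of a factor $1+\epsilon$ over it: a heavy-child resolution contributes $r_i\le 3/(1+\epsilon)$ where the baseline allows $3$; a bad-vertex resolution forces a factor $d_i\ge 4$ into the product, and since $3^{4/3}/4>1+\epsilon$ (indeed $3^{d/3}/d$ is increasing for $d\ge 3$) this is a saving relative to spending that budget in factors of $3$; and an unseen-vertex resolution lowers the budget by one, saving a factor $3^{1/3}>1+\epsilon$. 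As the $N^4[a]$ are pairwise disjoint, these savings occur in disjoint portions of the path (or use disjoint unseen vertices) and hence multiply, yielding
$$\prod_{i=0}^{k-1} r_i\ \le\ 3^{(n-1)/3}\,(1+\epsilon)^{-M},$$
which is exactly the bound we wanted.

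The step I expect to be the main obstacle is controlling those steps of the path that lie inside some $N^4[a]$ and are dictated by $\tau_a$ rather than by maximising $L$: there the crude bound $r_i\le d_i$ need not hold, and a careless estimate would let the passage of the path through the (boundedly many, boundedly large) regions $N^4[a]$ swamp the $(1+\epsilon)$-savings. The key to overcoming this is the observation that at a good vertex $u_i\in N^4[a]$ with no heavy child the three child-branches all have $L$-value within a $(1+\epsilon)$-factor of $\ell_i/3$, so that $r_i$ there is $3(1+O(\epsilon))$; one must then organise the bookkeeping so that the total slack accumulated over all such steps stays strictly below $(1+\epsilon)^M$ --- and this is precisely why $\epsilon$ is taken doubly-exponentially small in $\Delta$, so that it can absorb all the $\mathrm{poly}(\Delta)$-scale local errors incurred inside the neighbourhoods of the atypical vertices while still leaving a net gain at each one.
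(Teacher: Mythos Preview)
Your overall strategy---follow a root-to-leaf path, play Adversary's winning strategy $\tau_a$ inside each $N^4[a]$ unless a ``heavy'' child appears, and argue that each $a\in A'$ contributes a multiplicative saving---is exactly the paper's approach. The gap is precisely the one you flag in your final paragraph, and it is not closed by taking $\epsilon$ doubly exponentially small.

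Concretely: at a good vertex $u_i\in N^4[a]$ with no heavy child, all three children have $L$-value below $(1+\epsilon)\ell_i/3$, so the child $\tau_a$ picks may have $L$-value as small as $\ell_i-2(1+\epsilon)\ell_i/3=(1-2\epsilon)\ell_i/3$, giving $r_i\le 3/(1-2\epsilon)$. If at the very next step a heavy child appears and you resolve $a$, you get $r_{i+1}\le 3/(1+\epsilon)$. The combined contribution over baseline is
\[
\frac{r_i}{3}\cdot\frac{r_{i+1}}{3}\ \le\ \frac{1}{(1-2\epsilon)(1+\epsilon)}\ =\ \frac{1}{1-\epsilon-2\epsilon^2}\ >\ 1,
\]
a net \emph{loss}, not a gain. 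Since there can be up to $\Delta^5$ such $\tau_a$-steps per $a$, the accumulated slack is of order $(1-2\epsilon)^{-\Delta^5}\approx 1+2\Delta^5\epsilon$, which exceeds the $(1+\epsilon)$ saving from a heavy-child resolution regardless of how small $\epsilon$ is: both scale linearly in $\epsilon$, and the loss coefficient $2\Delta^5$ beats the gain coefficient $1$. So your displayed inequality $\prod r_i\le 3^{(n-1)/3}(1+\epsilon)^{-M}$ does not follow.

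The paper's fix is not to shrink $\epsilon$ but to let the heavy-child threshold \emph{grow} with the visit number: at the $r$-th vertex chosen in $N^4[a]$ use threshold $1+\epsilon_r$ with $\epsilon_r=4^{r-1}\epsilon$. Then a $\tau_a$-step costs at most $(1-2\epsilon_r)^{-1}$, while a heavy-child (or bad-vertex) resolution at the $(s{+}1)$-th visit gains $(1+\epsilon_{s+1})$, and one checks the telescoping identity
\[
(1-2\epsilon_1)(1-2\epsilon_2)\cdots(1-2\epsilon_s)(1+\epsilon_{s+1})\ >\ 1+\epsilon
\]
for all $s\le \Delta^5$. This is where the specific size $\epsilon=2^{-\Delta^{100}}$ is used: it guarantees $\epsilon_r<1/2$ throughout so the factors stay positive, and the geometric growth of $\epsilon_r$ makes the final gain dominate all prior losses. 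With this amendment your argument goes through essentially as written; without it the heavy-child resolutions do not deliver the per-vertex saving you need.
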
  

\begin{proof}

Suppose, in order to obtain a contradiction, that $|A| \ge M$. For each $a \in A$, Adversary has a winning strategy $\tau_a$ played on vertices of $N^4[a]$ in the $a$-typical game on $T$. As for all $a, a' \in A$, we have $N^4[a] \cap N^4[a'] = \emptyset$, these strategies are played on disjoint sets of vertices. 

We sequentially choose a path $v_0,u_1, \ldots, u_k$ of vertices through the tree where $u_k$ is a leaf. At each stage $i$, we choose a vertex $u_i$ and define $A_i$ (the subset of $A$ that we still care about tracking). We also define $C_i(a)$ for each $a \in A$. For each $a \in A$ define $C_1(a):= 1$.  Also define 
\begin{equation}\label{initqk}
C_i := \frac{c\cdot 3^{1/3}}{\Delta}\prod_{a \in A}C_i(a) \text{ and } q_i:= C_i3^{\frac{n-m_i-1}{3}},
\end{equation}
where $m_i$ is the number of vertices of $V(F)\backslash \{v\}$ seen by $V(P_{u_{i-1}})\backslash \{u_0\}$ (thus $n-m_i -1$ vertices of $V(F)\backslash \{v\}$ are unseen by $V(P_{u_{i-1}})\backslash \{u_0\}$). So $q_1 = \frac{c \cdot 3^{1/3}}{\Delta}3^{(n-1)/3}$. Throughout the process we maintain the property that $L(u_i) \ge q_i$ for each $i$. 

We now describe an algorithm that determines our choice of vertices. For $r\ge 1$, let $\epsilon_r = 2^{2(r-1)}\epsilon$. 

\noindent\rule[0.5ex]{\linewidth}{1pt}
Vertex Choice Algorithm

\noindent\rule[0.5ex]{\linewidth}{1pt}
We pick $u_1 \in N(v_0)$ such that $L(u_1)$ is maximised, and define $A_1 := A$.
Suppose the most recently chosen vertex is $u_i$ and that $m_i$ vertices of $V(F)\backslash \{v\}$ have been seen by $\{u_1,\ldots, u_{i-1}\}$.
If $u_i$ is not a leaf; we have two cases:

\begin{case2} $t(u_i) \in N^4[a]$ for some $a \in A_i$.
\end{case2} 
Suppose it is the $r$-th time we have chosen a vertex $y$ such that $t(y) \in N^4[a]$. We have two subcases:

\begin{subcase} $u_i$ is good.
\end{subcase}
In this case $u_i$ has exactly three children $y_1,y_2,y_3$. 
\begin{itemize}
\item [(i)] If there exists $j$ such that $L(y_j) \ge \frac{1}{3}(1 + \epsilon_r)C_i3^{(n-m_i-1)/3}$ then choose $u_{i+1} := y_j$. 

- Set $C_{i+1}(a):= (1 + \epsilon_r)C_i(a)$ and $C_{i+1}(y) := C_i(y)$, for all $y \in A\backslash\{a\}$.

- Set $A_{i+1} := A_i \backslash \{a\}$.

\item [(ii)] Else, every $y_j$ satisfies $L(y_j) > \frac{1}{3}(1 - 2\epsilon_r)C_i3^{(n-m_i-1)/3}$. In this case, choose $u_{i+1}$ according to strategy $\tau_a$.

- Set $C_{i+1}(a):= (1 - 2\epsilon_r)C_i(a)$ and $C_{i+1}(y) := C_i(y)$, for all $y \in A\backslash\{a\}$.

- Set $A_{i+1} := A_i$.

\end{itemize}
\begin{subcase} $u_i$ is bad.
\end{subcase}
In this case $u_i$ does not have exactly 3 children. Suppose $u_i$ has children $y_1, \ldots, y_k$ for some $k \not= 3$. Pick $j$ such that $L(y_j)$ is maximised and set $u_{i+1}:= y_j$.\vspace{0.2cm}

- Set $C_{i+1}(a) = (1 + \epsilon_r)C_i(a)$ and $C_{i+1}(y) := C_i(y)$, for all $y \in A\backslash\{a\}$.\vspace{0.2cm}

- Set $A_{i+1} := A_i \backslash \{a\}$.

\begin{case2}
$t(u_i) \not\in N^4[a]$ for any $a \in A_i$:
\end{case2}
Then $v$ has children $y_1,\ldots y_k$ for some $k \ge 1$. Pick $j$ such that $L(y_j)$ is maximised and set $u_{i+1} = y_j$.\vspace{0.2cm}

- Set $C_{i+1}(y) := C_i(y)$, for all $y \in A$. \vspace{0.2cm}

- Set $A_{i+1} := A_i$.\vspace{0.4cm}

The process terminates when $u_i$ is a leaf.

\noindent\rule[0.5ex]{\linewidth}{1pt}

We now analyse the consequences of choosing vertices in this manner.

\begin{claim}\label{qi}
For each vertex $u_i$ chosen during the Vertex Choice Algorithm, we have $L(u_i) \ge q_i$.
\end{claim}

\begin{proof}[Proof of Claim \ref{qi}]
 We argue by induction on $i$; the case $i= 1$ holds as we chose $u_1 \in N(v_0)$ to maximise $L(u_1)$. Suppose $L(u_i) \ge q_i = C_i3^{(n-m_i-1)/3}$. Now for the inductive step: we consider each case of the algorithm separately, and prove that the statement holds there. 

In Subcase 1(i) we have: 
$$L(u_{i+1}) \ge \frac{1}{3}(1+ \epsilon_r)C_i3^{\frac{n-m_i-1}{3}} = C_{i+1}3^{\frac{n-m_{i+1}-1}{3}} = q_{i+1}.$$

In Subcase 1(ii) we have: $$L(u_{i+1}) > \frac{1}{3}(1 - 2\epsilon_r)C_i3^{\frac{n-m_i-1}{3}} = C_{i+1}3^{\frac{n-m_{i+1}-1}{3}} = q_{i+1}.$$

In Subcase 2, recall that $u_i$ has neighbours $y_1, \ldots, y_k$ (for $k \not=3)$ and we pick $u_{i+1}$ to be the $y_j$ which maximises $L(y_j)$. Thus we have:
$$L(u_{i+1})\ge \frac{C_i}{k}3^{\frac{n-m_i-1}{3}} = C_i\frac{3^{k/3}}{k}3^{\frac{n-m_i-k-1}{3}}.$$
The value of $\frac{3^{k/3}}{k}$ is minimised for $k \not=3$ at $k = 2$. Thus,
$$L(u_{i+1}) \ge C_i\frac{3^{2/3}}{2}3^{\frac{n-m_i-k-1}{3}} \ge C_i(1 + \epsilon_r)3^{\frac{n-m_i-k-1}{3}} = C_{i+1}3^{\frac{n-m_{i+1}-1}{3}} = q_{i+1}.$$

In Case 2, recall that $u_i$ has neighbours $y_1, \ldots, y_k$ and we pick $u_{i+1}$ to be the $y_j$ which maximises $L(y_j)$. Thus we have:

$$L(u_{i+1}) \ge \frac{C_i}{k}3^{\frac{n-m_i-1}{3}} = C_i\frac{3^{k/3}}{k}3^{\frac{n-m_i-k-1}{3}} \ge C_i3^{\frac{n-m_i-k-1}{3}} = C_{i+1}3^{\frac{n-m_{i+1}-1}{3}} = q_{i+1},$$
where the last inequality is strict unless $k = 3$.
\end{proof}

We are now equipped to analyse what remains once the algorithm terminates at a leaf $u_k$. For each $a \in A$ at least one of the following outcomes occurs upon termination of the algorithm. 
\begin{itemize}
\item [(O1)] During the algorithm, at a vertex $a \in N^4[a]$, we either chose a branch with a large proportion of leaves via Case 1(i) or we chose a bad vertex via Subcase 2.
\item [(O2)] There is some vertex $w \in N^4[a]$ that is unseen by $V(P_{u_k})$ upon termination of the algorithm.
\end{itemize}

First observe that for all $s \le \Delta^5 + 1$,
\begin{equation}\label{epsil}
(1-2\epsilon_1)(1-2 \epsilon_2)\ldots(1-2\epsilon_{s})(1 + \epsilon_{s+1}) > (1 + \epsilon).
\end{equation}
Our choice of $\epsilon$ ensures that each factor on the left hand side is greater than zero.

Suppose $a \notin A_k$. Then there exists some $j$ such that $a \in A_j$ but $a \notin A_{j+1}$. Thus at the $j$th stage of the algorithm we had $t(u_j) \in N^4[a]$ and we either chose a branch with a large proportion of leaves via Subcase 1(i) or we chose a bad vertex via Subcase 2. Let $t:= |t(\{u_1,\ldots, u_j\}) \cap N^4[a]|$ be the number of vertices, $x \in T$ with $t(x) \in N^4[a]$, chosen up to the $j$th stage.
From the algorithm we see
\begin{equation}\label{ck}
C_k(a) = C_{j+1}(a) = (1-2\epsilon_1)(1-2 \epsilon_2)\ldots(1-2\epsilon_{t})(1 + \epsilon_{t+1}).
\end{equation}
As $|N^4[a]| \le \Delta^5$, we have $t \le \Delta^5$, and so by (\ref{epsil}) we have for $a \notin A_k$ 
\begin{equation}
\label{grr}
C_k(a) > (1 + \epsilon).
\end{equation}

Now suppose $a \in A_k$. Let $t:= |t(\{u_1,\ldots,u_{k-1}\})\cap N^4[a]|$. By following the algorithm we see that whenever we are at a vertex $u \in N^4[a]$, we do not pass through Subcase 1(i) or Subcase 2, as this would imply $a \notin A_k$. Thus $C_k(a) = \prod_{i=1}^t(1 - 2 \epsilon_{i})$. By choice of $\epsilon$, for all $s \le \Delta^5$ we have
$$3^{1/3} > 1 +2^{2s}\epsilon,$$ so by (\ref{epsil}) and the observation that $t \le \Delta^5$, 
\begin{equation}\label{ak}
3^{1/3}\cdot C_k(a) > (1 + \epsilon).
\end{equation} 

For each $a \in A_k$, the set $V(P_{u_k})$ does not see all of $N^4[a]$, as we either achieve outcome (O1) or (O2) for $a$, and if we achieved (O1), then $a$ would not be in $A_k$. So at termination we have $n - m_k -1 \ge |A_k|$ and so by the definition of $q_k$ (\ref{initqk}) we have:
\begin{equation}\label{qk}
q_k \ge C_k3^{\frac{|A_k|}{3}}.
\end{equation} By (\ref{grr}), we have: 
$$\prod_{a \in A}C_k(a) \ge (1 + \epsilon)^{|A\backslash A_k|}\prod_{a \in A_k}C_k(a),$$ and so by substituting for $C_k$ in (\ref{qk}) and applying (\ref{ak}), we have:
$$q_k \ge \frac{c\cdot 3^{1/3}}{\Delta} 3^{\frac{|A_k|}{3}} \prod_{a \in A} C_k(a) \ge \frac{c\cdot 3^{1/3}}{\Delta} (1 + \epsilon)^{M - |A_k|}3^{\frac{|A_k|}{3}}\prod_{a \in A_k} C_k(a) \ge \frac{c\cdot 3^{1/3}}{\Delta}(1+\epsilon)^{M} > 1.$$
This contradicts Claim \ref{qi}, as $u_k$ is a leaf and thus $L(u_k) = 1$. Thus $|A| < M$,  concluding the proof of Lemma \ref{genatypcount}.

\end{proof}

\begin{cor}\label{const} 
Fix $c>0$. Let $F$ be an $n$-vertex graph with $f(F) \ge 2c \cdot 3^{n/3}$ and $\Delta:= \Delta(F)>1$. Then for any $v \in V(F)$, there exists a constant $C = C(\Delta,c)$ such that at most $C$ vertices are atypical for $T(v)$, the exploration tree from $v$ in $F$.
\end{cor}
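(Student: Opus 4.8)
The plan is to deduce Corollary~\ref{const} almost immediately from Lemma~\ref{genatypcount}, which already contains all of the real content. Two ingredients will need to be supplied: the hypothesis $L(v_0)\ge c\cdot 3^{n/3}$ of that lemma, and a reduction from an arbitrary set of atypical vertices to one whose closed fourth neighbourhoods are pairwise disjoint, at the cost of only a $\Delta$-dependent factor.

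First I would establish the lower bound on $L(v_0)$. By Lemma~\ref{leaves}, $L(v_0)\ge 2f_v(F)$; and in the settings in which this corollary is used, every vertex of $F$ lies in a positive constant fraction of the induced cycles of $F$ (for $F=G_{max}$ this is Lemma~\ref{vertex}(iii), giving $f_v(G_{max})\ge\tfrac{c}{20}m(n)=\Omega(3^{n/3})$). So $f_v(F)\ge c'\cdot 3^{n/3}$ for a suitable constant $c'=c'(c)$, whence $L(v_0)\ge c'\cdot 3^{n/3}$; the factor $2$ in the hypothesis $f(F)\ge 2c\cdot 3^{n/3}$ is present to absorb the factor $\tfrac{1}{2}$ in Lemma~\ref{leaves}.

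Next I would perform the reduction. Let $A'\subseteq V(F)\backslash N^4[v]$ be the set of vertices atypical for $T(v)$ (by definition every atypical vertex lies outside $N^4[v]$). Since graph distance is symmetric, $N^4[a]\cap N^4[a']\ne\emptyset$ exactly when $a'\in N^8[a]$, and $|N^8[a]|\le\Delta^9$ because $\Delta(F)=\Delta$. A greedy procedure --- repeatedly pick an as-yet-undeleted $a\in A'$, put it into $A$, and delete all of $A'\cap N^8[a]$ --- therefore produces a subset $A\subseteq A'$ with $N^4[a]\cap N^4[a']=\emptyset$ for all distinct $a,a'\in A$ and $|A|\ge|A'|/\Delta^9$. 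Applying Lemma~\ref{genatypcount} to $A$ (with $c'$ in the role of $c$): since $\epsilon=2^{-\Delta^{100}}>0$, the least integer $M$ with $c'\cdot 3^{1/3}(1+\epsilon)^M>\Delta$ is finite and depends only on $\Delta$ and $c'$, and the lemma yields $|A|<M$. Hence $|A'|<\Delta^9 M=:C(\Delta,c)$, which is the required bound.

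I do not expect a genuine obstacle here: Lemma~\ref{genatypcount} is the hard part, and everything above is routine. The two points that need a little care are the justification of the exponential lower bound on $L(v_0)$ --- that is, that the graphs to which the corollary is applied really do have every vertex in $\Omega(3^{n/3})$ induced cycles --- and checking that extracting a distance-$9$-separated subfamily costs only a factor depending on $\Delta$ alone.
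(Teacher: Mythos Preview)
Your proposal is correct and follows essentially the same route as the paper: extract a subfamily of atypical vertices with pairwise disjoint closed fourth neighbourhoods (losing only a $\Delta$-dependent factor), invoke Lemma~\ref{vertex} and Lemma~\ref{leaves} to get $L(v_0)\ge c'\cdot 3^{n/3}$, and then apply Lemma~\ref{genatypcount}. Minor differences: the paper uses the cruder bound $|N^4[x]|\le\Delta^5$ and claims $|U|\ge|A|/\Delta^5$, whereas your $N^8$ argument with factor $\Delta^9$ is the more careful justification; and you are right to flag that the bound $f_v(F)=\Omega(3^{n/3})$ for \emph{every} $v$ is really only established for $G_{max}$ via Lemma~\ref{vertex}(iii), a point the paper's proof glosses over.
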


\begin{proof}
Let $A$ be the set of vertices that are atypical for $T$. Let $U$ be the largest subset of $A$ such that for all $a, a' \in U$, we have $N^4[a]\cap N^4[a'] = \emptyset$. As $|N^4[x]| \le \Delta^5$ for every $x \in V(F)$, we have 
\begin{equation}
\label{ua}
|U| \ge \frac{|A|}{\Delta^5}.
\end{equation}
We wish to apply Lemma \ref{genatypcount} to $F$, $T$ and $U$.

As $f(F) \ge 2c \cdot m(n)$, by Lemma \ref{vertex} $f_v(F) \ge \frac{c}{10}m(n)$ for all $v \in F$. Combining this with Lemma \ref{leaves} gives
$$L(v_0) \ge 2f_v(F) \ge \frac{c}{5} m(n)\ge \frac{c}{20}3^{n/3},$$
for some constant $c'$ and where the last inequality follows from (\ref{mn}). 

Applying Lemma \ref{genatypcount} shows that $|U| < M$, where $M$ is the smallest integer such that $\frac{c}{20}\cdot 3^{1/3}(1 + 2^{-\Delta^{100}})^M > \Delta$. Combining this with (\ref{ua}) gives the required result.

\end{proof}
Let $B= (B_1,\ldots, B_k)$ be a braid in $G_{max}$. If $|B_i| = 3$ for all $i$, we call $B$ a \emph{3-braid}. For a braid $B$ of length at least 4, we say that an induced cycle \emph{passes through} $B$ if it contains a vertex from every cluster of $B$. Call a braid \emph{maximal} if it is not contained in any longer braid. The following simple deduction will be used.

\begin{lem}\label{cn}
There exists a constant $C$ such that $G_{max}$ contains at most $C$ maximal 3-braids and a 3-braid $B$ such that $|V(B)| = \Omega(n)$. Moreover, for any 3-braid $B'$ on $rn$ vertices, at least $f(H_n)\left(1 - 3^{-rn/6}\right)$ induced cycles in $G_{max}$ pass through $B'$.
\end{lem}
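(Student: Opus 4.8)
### Proof Proposal for Lemma \ref{cn}

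The plan is to establish the three assertions in turn, leveraging the structural results already in hand. First, for the bound on the number of maximal $3$-braids, I would use Lemma \ref{structure} together with Corollary \ref{const}. Fix any vertex $v \in V(G_{max})$; by Corollary \ref{const}, all but a constant number $C_0$ of vertices are $v$-typical. For each $v$-typical vertex $z$, Lemma \ref{structure} hands us the sets $V, Z, W$ with $z \in Z$, which is essentially a ``local $3$-braid slice'' centred at $z$. I would argue that these local slices fit together consistently: if $z$ and $z'$ are both typical and lie in overlapping slices, the slices must agree, so the typical vertices partition (up to a constant error set) into clusters of size $3$ forming a $3$-braid. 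Since each maximal $3$-braid that is genuinely maximal must ``end'' somewhere, and each endpoint is either atypical or adjacent to the constant-size atypical set, the number of distinct maximal $3$-braids is bounded by a function of $C_0$ and $\Delta < 30$. The long $3$-braid $B$ with $|V(B)| = \Omega(n)$ then exists by pigeonhole: the typical vertices number $n - O(1)$, they are covered by at most $C$ maximal $3$-braids, so one of them has $\Omega(n)$ vertices.

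For the third assertion — that for any $3$-braid $B'$ on $rn$ vertices, at least $f(H_n)(1 - 3^{-rn/6})$ induced cycles of $G_{max}$ pass through $B'$ — the idea is to bound from above the number of induced cycles that do \emph{not} pass through $B'$. Write $B' = (B'_1, \ldots, B'_{rn})$ with each $|B'_i| = 3$. An induced cycle failing to pass through $B'$ misses some cluster $B'_j$ entirely. I would first handle cycles disjoint from $V(B')$: such a cycle lives in $G_{max} \setminus V(B')$, a graph on $(1-r)n$ vertices, so by Robson's bound (or more simply by the $\Theta(3^{n/3})$ bound combined with Lemma \ref{upperv}-type estimates) there are at most $3^{(1+o(1))(1-r)n/3}$ of these. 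For cycles meeting $V(B')$ but skipping some cluster, I would use the braid structure: entering and leaving the region $V(B')$ forces the cycle to traverse a long consecutive run of clusters, and within each cluster it picks one of at most $3$ vertices but must avoid at least one cluster entirely, which costs a factor of roughly $3^{-(\text{length of run})/3}$ relative to the full count $f(H_n)$. Summing the geometric series over which cluster is skipped and over the possible run-lengths yields a total of at most $f(H_n) \cdot 3^{-rn/6}$ (the exponent $rn/6$ rather than $rn/3$ absorbing the combinatorial overhead and the fact that the cycle might skip near an endpoint of $B'$). Subtracting gives the claimed lower bound.

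The main obstacle I anticipate is the second part of the third assertion: cleanly controlling the induced cycles that partially overlap $V(B')$. One must argue that an induced cycle which enters the $3$-braid region, uses some but not all of its clusters, and exits, is ``wasteful'' compared to a cycle that passes straight through, and quantify this waste as an exponential saving in $n$. The braid condition $B'_{i-1} \cup B'_{i+1} \subseteq N(x) \subseteq B'_{i-1} \cup B'_i \cup B'_{i+1}$ is what makes this work — it forces a cycle passing near $B'$ to behave almost like a path through consecutive clusters, so the counting reduces to the path estimate of Corollary \ref{pathcor} applied to a sub-braid, multiplied by bounded ``boundary'' factors. I would phrase this as: fix the (constant-length) portion of the cycle outside $V(B')$ together with the two clusters where it enters and exits; the remaining segment inside $B'$ is an induced path through a consecutive block of clusters of $B'$, of which there are at most $3^{(\text{block length})/3}$, and since the block misses at least one of the $rn$ clusters, a short calculation bounds the total by $f(H_n) \cdot 3^{-rn/6}$. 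Everything else is routine bookkeeping with the constants from Lemma \ref{deg} and Corollary \ref{const}.
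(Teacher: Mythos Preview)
Your handling of the first two assertions matches the paper's: end clusters of maximal $3$-braids consist of $v$-atypical vertices (by Lemma~\ref{structure} each typical vertex sits in a central cluster), so Corollary~\ref{const} bounds the number of such end clusters and hence the number of maximal $3$-braids; pigeonhole then gives the long braid.

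For the third assertion you are working much harder than necessary, and the ``main obstacle'' you identify is not actually an obstacle. The braid condition says that for every central cluster $B'_i$ (with $2\le i\le k-1$) and every $x\in B'_i$, one has $N_{G_{max}}(x)\subseteq B'_{i-1}\cup B'_i\cup B'_{i+1}$. So the only vertices of $B'$ with neighbours outside $V(B')$ lie in the end clusters $B'_1,B'_k$. Consequently, an induced cycle that contains a vertex outside $V(B')$ and also a vertex from any deep interior cluster $B'_j$ (say $3\le j\le k-2$) must enter through one end, proceed through consecutive clusters to $B'_j$, and exit through the other end --- in other words it must pass through the whole braid. There is no ``partial overlap'' case to analyse beyond the two end clusters on each side. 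The paper simply observes that an induced cycle failing to pass through $B'$ is either a $C_4$ lying inside $B'$ (at most $O(n^4)$ of those) or is contained in $V(G_{max})\setminus\bigcup_{i=3}^{k-2}B'_i$, a graph on $(1-r)n+12$ vertices; summing the bound of Lemma~\ref{upperv} over its vertices gives at most $[(1-r)n+12]\binom{30}{2}3^{[(1-r)n+9]/3}$ such cycles. Since $f(H_n)=\Theta(3^{n/3})$, this is at most $f(H_n)\cdot 3^{-rn/6}$ for large $n$, and the bound follows. Your geometric-series argument over ``which cluster is skipped'' and your worry about controlling paths through sub-blocks are unnecessary once you use the neighbourhood containment in the braid definition.
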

\begin{proof}
Let $v \in V(G_{max})$. The only vertices which can be contained in more than one maximal 3-braid lie in end clusters. By Lemma \ref{structure}, every $v$-typical vertex is contained in a central cluster of exactly one maximal 3-braid. So any vertex in the end cluster of a maximal 3-braid is $v$-atypical. By Corollary \ref{const}, there exists a constant $c$ such that at most $c$ vertices are $v$-atypical for $G_{max}$. Each of these vertices is contained in at most $\Delta(G_{max}) \le 30$ maximal 3-braids. So $G_{max}$ contains (crudely) at most $30c$ maximal 3-braids, proving the first statement of the lemma. 
 
The union of the maximal 3-braids in $G_{max}$ contains all the typical vertices and so it contains at least $n/2$ vertices for large $n$. Therefore, when $n$ is sufficiently large, as there are at most $30c$ maximal 3-braids some 3-braid $B = (B_1,\ldots, B_k)$ contains $\Omega(n)$ vertices. 

For the final claim, observe that if an induced cycle does not pass through a 3-braid $\mathcal{B'}= (B'_1,\ldots,B'_k)$ on $rn$ vertices, then it is either a $C_4$ contained in $B$ (there are at most $O(n^4)$ of these), or it is contained in $V(G_{max})\backslash \bigcup_{i=3}^{k-2}B'_i$ (by Lemma \ref{upperv}, there are at most $[(1-r)n+12]\binom{30}{2}3^{[(1-r)n+9]/3}$ of these).
Therefore at most
$$[(1-r)n+12]\binom{30}{2}3^{[(1-r)n+9]/3} + O(n^4)$$
induced cycles of $G_{max}$ do not pass through $B$. So for $n_0$ sufficiently large, at least 
$$f(H_n)\left(1 - 3^{-rn/6}\right)$$ 
induced cycles pass through $B$.

\end{proof}

The next lemma shows that $G_{max}$ is a cyclic braid. It will remain to determine the cluster sizes and whether there are edges within the clusters  of $G_{max}$. 

\begin{lem}\label{cbraid}
$G_{max}$ is a cyclic braid.
\end{lem}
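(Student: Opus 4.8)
The plan is to use Lemma \ref{cn} to fix a long $3$-braid $B=(B_1,\dots,B_k)$ inside $G_{max}$ with $|V(B)|=\Omega(n)$, through which at least $f(H_n)(1-3^{-\Omega(n)})$ induced cycles pass, and then bootstrap from the local structure of typical vertices (Lemma \ref{structure}) to show that \emph{all} of $V(G_{max})$ is organised into a cyclic sequence of clusters. First I would argue that the central clusters $B_3,\dots,B_{k-2}$ of the long $3$-braid each consist of typical vertices, so by Lemma \ref{structure} each such vertex $z$ has its two ``sides'' $V(z)$, $W(z)$ equal to the neighbouring clusters $B_{i-1}$, $B_{i+1}$ (the sets are forced because the three pairwise-common-neighbour conditions pin them down), and $N(z)\subseteq B_{i-1}\cup B_i\cup B_{i+1}$. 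This already gives the braid condition for the interior clusters; the work is at the two ends of $B$, where I must show the braid ``closes up'' into a cycle of clusters rather than terminating or branching.

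The key steps, in order, are: (1) take the maximal $3$-braid $B'=(B'_1,\dots,B'_m)$ containing the long $3$-braid $B$ as a sub-braid; by Lemma \ref{cn} there are only constantly many maximal $3$-braids and this one still has $\Omega(n)$ vertices. (2) Show $B'$ has length $\Omega(n)$ and that all but a constant number of its clusters are ``good'' (size $3$, all vertices typical), so the braid conditions $B'_{i-1}\cup B'_{i+1}\subseteq N(x)\subseteq B'_{i-1}\cup B'_i\cup B'_{i+1}$ hold for all interior clusters by Lemma \ref{structure}. (3) Examine the end clusters $B'_1$ and $B'_m$: since almost all induced cycles of $G_{max}$ pass through $B'$, and an induced cycle that enters a long $3$-braid through one end must exit through the other end or wrap around, I would argue that if $B'$ did not close into a cyclic braid, then the neighbourhood structure at $B'_1$ (and $B'_m$) would force either a much smaller count of induced cycles through $B'$ (contradicting Lemma \ref{cn}), or the presence of a vertex that could be used to extend $B'$ into a longer $3$-braid (contradicting maximality), or an atypical structure affecting $\Omega(n)$ vertices (contradicting Corollary \ref{const}). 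Concretely: let $v_1\in B'_1$; its neighbours outside $B'_2$ must (by an argument paralleling Claim \ref{Gbraid} in Theorem \ref{thm:path}, using that most cycles through $B'$ are counted like induced $x$–$y$ paths along the braid) all lie in a common cluster $B'_0:=N(v_1)\setminus(B'_1\cup B'_2)$, and $B'_0$ together with $B'_m$, $B'_1$, $B'_2$ must be consecutive; iterating around, the whole vertex set is exhausted and we get a closed cyclic sequence $B'_0,B'_1,\dots$ whose length is a multiple of nothing in particular but which partitions $V(G_{max})$.

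The main obstacle I expect is step (3): controlling the end clusters and proving the braid genuinely wraps around into a cycle, rather than a ``path-like'' braid with two distinct ends, or a more complicated gluing. The delicate point is that Lemma \ref{structure} only gives information about typical vertices, and the constantly-many atypical vertices could in principle sit exactly at the ends and distort the picture; so I would need a counting argument in the spirit of Lemma \ref{cn} and Theorem \ref{thm:path} showing that any such distortion either kills too many induced cycles or contradicts maximality of $B'$. I would lean heavily on the facts already established — $\Delta(G_{max})\le 30$ (Lemma \ref{deg}), $m(n)=\Theta(3^{n/3})$ (equation \eqref{mn}), only $O(1)$ atypical vertices (Corollary \ref{const}), and the path-extremal structure (Theorem \ref{thm:path}) — and the bulk of the remaining effort is a careful case analysis of how induced cycles can traverse the ends of a long maximal $3$-braid, which I would carry out by treating the braid as essentially a long ``highway'' along which induced cycles behave like the induced paths analysed in Section \ref{section:path}.
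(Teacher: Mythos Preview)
Your outline correctly isolates the long $3$-braid from Lemma \ref{cn} and the role of Lemma \ref{structure} on typical vertices, but step (3) is where the real content lies, and your mechanism there does not work as stated. Your plan is to ``iterate around'' from the end clusters of the maximal $3$-braid $B'$, extending the cluster structure one step at a time until the two ends meet. The iteration is driven by Lemma \ref{structure}, which applies only to typical vertices; the constantly many atypical vertices will sit exactly in the gap between $B'_1$ and $B'_m$, and once you hit one of them your extension stops. Your fallback trichotomy (fewer cycles, extend $B'$, or $\Omega(n)$ atypical vertices) is not exhaustive: a constant-size non-$3$-braid region linking $B'_1$ to $B'_m$ --- say, containing a cluster of size $2$ or $4$ --- can carry the required number of induced paths without contradicting maximality of $B'$ as a \emph{$3$}-braid and without producing more than $O(1)$ atypical vertices. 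So the ``careful case analysis'' you defer is really a re-proof of Theorem \ref{thm:path} in a cyclic setting, and you have not indicated how to carry it out.

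The paper avoids this by reducing to Theorem \ref{thm:path} itself. Delete the interior clusters $B_2,\dots,B_{k-1}$ of the long $3$-braid and attach two fresh vertices $x,y$ complete to $B_1$ and $B_k$; call the resulting graph $H$. Induced cycles of $G_{max}$ passing through $B$ correspond, up to a factor of $3^{k-2}$, to induced $x$--$y$ paths in $H$, so the lower bound from Lemma \ref{cn} becomes a lower bound on $p_2(H;x,y)$. Now shrink $H$ to a constant-size graph $H'$ by repeatedly deleting a supercentral cluster of some maximal $3$-braid in the current graph and making its two neighbouring clusters adjacent; each such step costs at most a factor of $3$ in $p_2(\,\cdot\,;x,y)$ and preserves the property of being a braid. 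Because there are only $O(1)$ maximal $3$-braids (Lemma \ref{cn}) and $O(1)$ atypical vertices (Corollary \ref{const}), the process terminates with $|V(H')|\le\beta$ for some absolute constant $\beta$. Tracking the constants gives $p_2(H';x,y)\ge f_2(|V(H')|)$, so Theorem \ref{thm:path} applies \emph{directly} to the constant-size graph $H'$ and forces $H'\in\mathcal{F}_{|V(H')|}$. Reversing the contraction shows $H$ is a braid with end clusters $\{x\},\{y\}$, whence $G_{max}$ is a cyclic braid. The idea you are missing is this cut-and-contract manoeuvre that converts the cyclic problem on $n$ vertices into the already-solved path problem on $O(1)$ vertices; you gestured at the path analogy but did not set up the reduction that lets Theorem \ref{thm:path} be invoked as a black box.
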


\begin{proof}
Let $B:= (B_1,\ldots, B_{Cn/3})$ be the longest 3-braid in $G_{max}$. Let $Q$ be the number of induced cycles in $G_{max}$ that pass through $B$. By Lemma \ref{cn}, 
\begin{equation}\label{q1}
Q \ge f(H_n)\left(1 - 3^{-Cn/6}\right).
\end{equation}
Now let $G':= G [V(G_{max}) \backslash \bigcup_{i=2}^{Cn/3 - 1} B_i]$. Let $x$ and $y$ be two new vertices and define $H$ to be the graph on vertex set $V(H):= V(G') \cup \{x,y\}$, and edge set 
$$E(H) := E(G') \cup \{xb: b \in B_1\} \cup \{yb: b \in B_{Cn/3}\}.$$
We have 
\begin{equation}\label{q2}
Q = 3^{(Cn-6)/3}p_2(H;x,y).
\end{equation}
Combining (\ref{q1}) and (\ref{q2}) gives
\begin{equation}\label{q3}
p_2(H;x,y) \ge 3^{-(Cn-6)/3}\cdot f(H_n)\left(1 - 3^{-Cn/6}\right).
\end{equation}

We now focus on the structure of $H$. Let us call a central cluster $C$ of a maximal 3-braid $B$ \emph{supercentral} if for any $x \in C$ and $y$ in an end cluster of $B$, $d(x,y) \ge 5$. Define a new graph $H'$ via the following process.
\begin{itemize}
\item Set $F_1:= H$.
\item Let $i$ be maximal such that we have defined $F_i$. Suppose there exists a vertex $v_i \in F_i$, contained in a supercentral cluster $C_i$ of a maximal 3-braid $M_i$, where $C_i$ is adjacent to clusters $C^i_1$ and $C^i_2$. Then define $F_{i+1}$ to be the graph obtained from $F_i$ by deleting $C_i$ and adding every edge $\{uw: u \in C^i_1, w \in C^i_2\}$. 
\item If there exists no such vertex $v_i$, define $H':= F_i$.
\end{itemize}
The process will terminate as $H$ has a finite number of vertices. Observe that $F_{i+1}$ is a braid if and only if $F_{i}$ is a braid.  In addition, when $H'$ is a braid this process can be reversed to find $H$. We now show that $H'$ is a braid. 

Any $v$-typical vertex in $F_i$ that does not get deleted during the process is $v$-typical in $F_{i+1}$. Hence any $v$-typical vertex in $G_{max}$ that does not get deleted is $v$-typical in $H'$. By Lemma \ref{cn}, there exists a constant $a$ such that $G_{max}$ contains $a$ maximal 3-braids. $O(a)$ vertices from each of these braids will remain in $H'$ when the process terminates. Any vertex not contained in a 3-braid in $G_{max}$ is $v$-atypical in $G_{max}$. By Corollary \ref{const} there exists a constant $b$ such that there are at most $b$ such vertices. As $H'$ contains all the atypical vertices of $G_{max}$ and at most $O(a)$ vertices from each 3-braid in $G_{max}$, there exists a constant $\beta$ such that $|V(H')| \le \beta$. 

At stage $i$ of the process, $F_{i+1}$ contains all induced cycles of $F_i$ that do not pass through $M_i$ and a third of the number of cycles in $F_i$ that do pass through $M_i$. Thus we have
\begin{equation}
p_2(F_{i+1};x,y) \ge 3^{-1} \cdot p_2(F_i;x,y),
\end{equation}

and so
\begin{equation}\label{q4}
p_2(H';x,y) \ge 3^{-(|V(H)| - |V(H')|)/3} \cdot p_2(H;x,y),
\end{equation}
Combining (\ref{q3}) and (\ref{q4}) and observing that $|V(H)| = (1-C)n + 8$ gives 
\begin{equation}\label{q6}
p_2(H';x,y) \ge 3^{(-n-2 + |V(H')|)/3} \cdot f(H_n)\left(1 - 3^{-Cn/6}\right).
\end{equation}
As $$3^{(-n-2 +|V(H')|)/3} f(H_n) = f_2(|V(H')|) + o(1),$$ when $n_0$ is sufficiently large we have
\begin{equation}\label{q7}
3^{(-n-2 + |V(H')|)/3} \cdot f(H_n)\left(1 - 3^{-Cn/6}\right) > f_2(|V(H')|) - 1.
\end{equation}
As $p_2(H';x,y)$ is an integer, by taking $n_0$ to be sufficiently large, (\ref{q6}) and (\ref{q7}) give
$$ p_2(H';x,y) \ge f_2(|V(H')|).$$

Therefore, by Theorem \ref{thm:path}, $H'$ is isomorphic to a graph in $\mathcal{F}_{|V(H')|}$. Thus $H'$ is a braid. By reversing the process applied above (adding back in the supercentral clusters) to recreate $H$ from $H'$, we see that $H$ is a graph in $\mathcal{F}_{|V(H)|}$, and hence $G_{max}$ is a cyclic braid.
\end{proof}

\begin{cor}\label{mainstru}
We have the following:
\begin{itemize}
\item when $n \equiv 0$ modulo 3, $G_{max}$ has exactly $n/3$ clusters of size 3;
\item when $n \equiv 1$ modulo 3, $G_{max}$ has either one cluster of size 4 and $(n-4)/3$ of size 3, or two of size two and $(n-4)/3$ of size 3;
\item when $n \equiv 2$ modulo 3, $G_{max}$ has exactly one cluster of size 2 and $(n-2)/3$ of size 3.
\end{itemize}
\end{cor}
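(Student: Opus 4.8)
The plan is to exploit the fact that $G_{max}=(B_1,\dots,B_k)$ is a cyclic braid (Lemma~\ref{cbraid}) and that $f(G_{max})=m(n)\ge f(H_n)$ by~(\ref{leastB}), and to show that the multiset of cluster sizes $(b_1,\dots,b_k)$, where $b_i:=|B_i|$, must maximise the product $\prod_i b_i$ subject to $\sum_i b_i=n$ and $b_i\ge 1$. Since $\Delta(G_{max})<30$ by Lemma~\ref{deg}, each $b_i\le 30$, so $k=\Omega(n)$ is large.

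First I would classify the induced cycles of a cyclic braid whose number of clusters is large. Call an induced cycle a \emph{transversal} if it contains exactly one vertex of each $B_i$; there are exactly $\prod_i b_i$ of these, and each is an induced cycle of length $k$. The key point is that \emph{every} induced cycle meeting all $k$ clusters is a transversal: consecutive clusters of a cyclic braid are joined by all possible edges, so if an induced cycle $C$ contained two vertices $u,v$ of some $B_j$, then a neighbour of $u$ on $C$ lying in $B_{j-1}\cup B_{j+1}$ would be adjacent to $v$ as well, and since at most two vertices of $C$ can be adjacent to $u$ on $C$ this produces a chord. A similar argument shows $C$ cannot ``turn around'' (move from $B_j$ to $B_{j+1}$ and back), so an induced cycle that is not a transversal can only close up inside a bounded number of consecutive clusters; with $b_i\le 30$ there are therefore only $O(n)$ non-transversal induced cycles. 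Hence
\[
\prod_{i=1}^k b_i \ \le\ f(G_{max})\ \le\ \prod_{i=1}^k b_i + O(n).
\]
Alternatively, to get the lower bound on $\prod_i b_i$ one can avoid counting the short cycles: by Lemma~\ref{cn} at least $f(H_n)(1-3^{-rn/6})$ induced cycles pass through the long $3$-braid of $G_{max}$; such a cycle has a vertex in each of $\Omega(n)$ consecutive clusters, so by the ``no turning around'' observation it must wrap all the way around, hence meets every cluster and is a transversal, giving $\prod_i b_i\ge f(H_n)(1-o(1))$.

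Now combine this with $f(G_{max})=m(n)\ge f(H_n)\ge M_n$, where $M_n:=\max\{\prod_i c_i: c_i\ge 1\text{ integers},\ \sum_i c_i=n\}$ and the last inequality is precisely~(\ref{leastB}) (one checks that the right-hand side of~(\ref{leastB}) equals $M_n$ in each residue class). This yields $M_n-O(n)\le \prod_i b_i\le M_n$. An elementary optimisation identifies the maximising multisets as exactly: all $3$'s when $n\equiv 0$; all $3$'s except either one $4$ or two $2$'s when $n\equiv 1$; all $3$'s except one $2$ when $n\equiv 2$; moreover every other admissible multiset has product at most $\tfrac{8}{9}M_n$. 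Since $M_n=\Theta(3^{n/3})$, for $n$ large $\tfrac19 M_n$ dwarfs the $O(n)$ error, so $\prod_i b_i\ge M_n-O(n)$ forces the multiset of cluster sizes of $G_{max}$ to be one of the maximisers; this is exactly the statement of the Corollary, the number of clusters in each case being read off from the multiset.

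The main obstacle is the cycle-classification of the first step, in particular the two chord-chasing claims that an induced cycle meeting every cluster of a cyclic braid is a transversal, and that every other induced cycle lives in boundedly many consecutive clusters. Both reduce to the single observation that, in a cyclic braid, each vertex of an induced cycle in $B_i$ is adjacent to \emph{all} vertices of the cycle in $B_{i-1}\cup B_{i+1}$, of which at most two can be its cycle-neighbours; but organising the case analysis cleanly takes some care. The product optimisation and the error estimate are routine.
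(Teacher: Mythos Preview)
Your argument is correct but takes a different route from the paper. In the paper, Corollary~\ref{mainstru} is not proved separately at all: it is an immediate byproduct of the \emph{proof} of Lemma~\ref{cbraid}. There one shows (via Theorem~\ref{thm:path} applied to the contracted graph $H'$) that the auxiliary graph $H$ lies in $\mathcal{F}_{|V(H)|}$; since the clusters of $G_{max}$ are exactly the central clusters of $H$ together with the deleted size-$3$ clusters of the long $3$-braid, the multiset of cluster sizes is read off from the definition of $\mathcal{F}_{|V(H)|}$ with no further work. You instead use only the \emph{statement} of Lemma~\ref{cbraid} and argue directly: classify the induced cycles of a cyclic braid into $\prod_i b_i$ transversals plus $O(n)$ short cycles, deduce $\prod_i b_i\ge M_n-O(n)$ from $f(G_{max})\ge f(H_n)$, and finish with the integer product optimisation and the $8/9$ gap to the second-best multiset. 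Your route is more self-contained and would survive even if Lemma~\ref{cbraid} had been proved by some other means; the price is the chord-chasing classification, but that is needed later anyway (it is exactly what underlies Lemmas~\ref{noedges} and~\ref{1mod3}), so the cost is illusory. One small quibble: the assertion that an induced cycle ``cannot turn around'' is literally false for the short cycles, but what you actually use---that a non-transversal induced cycle is confined to at most three consecutive clusters, hence there are $O(n)$ of them---is correct and follows from the same chord argument.
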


We are now in a position to complete the proof of Theorem \ref{main}. The next lemma shows that the clusters in $G_{max}$ do not contain any edges, and thus we will prove the required result for $n \equiv$ 0 or 2 modulo 3. In the remaining case we will need a short argument to decide whether the graph contains two clusters of size two, or one of size four. In both cases, the arguments are essentially routine checks.

\begin{lem}\label{noedges}
When $n \equiv 0, 2$ modulo 3, no cluster of $G_{max}$ contains edges. 
\end{lem}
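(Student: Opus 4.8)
The plan is to establish the lemma in the following quantitative form: if $G_{max}$ is a cyclic braid with at least one edge inside some cluster, then $f(G_{max})<f(H_n)$, which is impossible since $f(G_{max})=m(n)\ge f(H_n)$. By Lemma~\ref{cbraid} and Corollary~\ref{mainstru} we may write $G_{max}=(B_1,\dots,B_k)$, where $k=\Theta(n)$ (so in particular $k\ge 5$), and every cluster has size $3$ except for a single cluster of size $2$ when $n\equiv 2$. For $1\le i\le k$ let $e_i$ be the number of edges inside $B_i$, so $0\le e_i\le\binom{|B_i|}{2}\le 3$, with $e_i=3$ exactly when $B_i$ induces a triangle; we must show every $e_i=0$.

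First I would classify the induced cycles of a cyclic braid of length $k\ge 5$. Using only the two defining properties — that each $B_i$ is completely joined to $B_{i-1}$ and $B_{i+1}$, and that $N(x)\subseteq B_{i-1}\cup B_i\cup B_{i+1}$ for $x\in B_i$ — one checks by chord-chasing that every induced cycle is exactly one of the following:
\begin{itemize}
\item a \emph{winding} cycle $v_1v_2\cdots v_k$ with $v_i\in B_i$ (there are precisely $\prod_i|B_i|$ of these, and this count agrees between $G_{max}$ and $H_n$);
\item a \emph{$(2,2)$-quadrilateral} $u\,p\,v\,q$ where $\{u,v\}$ is a non-adjacent pair in some $B_i$ and $\{p,q\}$ is a non-adjacent pair in $B_{i+1}$;
\item a \emph{$(1,2,1)$-quadrilateral} $u\,p\,v\,q$ where $\{u,v\}$ is a non-adjacent pair in some $B_i$, $p\in B_{i-1}$ and $q\in B_{i+1}$; or
\item a triangle.
\end{itemize}
The crucial point, and the only place where the hypothesis $|B_i|\le 3$ enters, is that an induced cycle containing two \emph{adjacent} vertices $u,v$ of a common cluster $B_i$ must be a triangle: such $u,v$ are consecutive on the cycle, and examining their other cycle-neighbours (each lying in $B_{i-1}\cup B_i\cup B_{i+1}$, and forced into $B_i$ to avoid a chord with the other of $u,v$) pushes at least four vertices into $B_i$ unless the cycle has length $3$. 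Consequently the quadrilaterals above use only non-adjacent pairs inside clusters, every triangle of $G_{max}$ uses a within-cluster edge (whereas $H_n$ has none), there are no further ``local'' induced cycles, and no induced cycle winds around the braid more than once.

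Armed with the classification, I would tally the induced cycles exactly. Writing $b_i=|B_i|$, the number of $(2,2)$-quadrilaterals on the consecutive pair $\{B_i,B_{i+1}\}$ is $\bigl(\binom{b_i}{2}-e_i\bigr)\bigl(\binom{b_{i+1}}{2}-e_{i+1}\bigr)$, the number of $(1,2,1)$-quadrilaterals with middle cluster $B_i$ is $b_{i-1}b_{i+1}\bigl(\binom{b_i}{2}-e_i\bigr)$, and the number of triangles is $\sum_i e_i(b_{i-1}+b_{i+1})+\#\{i:e_i=3\}$ (the first term counting triangles with two vertices in $B_i$ and one in a neighbour, the second counting triangles inside a single cluster). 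Subtracting the analogous totals for $H_n$ (all $e_i=0$) and reindexing one sum gives
\[
f(H_n)-f(G_{max})=\sum_i e_i\Bigl(\tbinom{b_{i-1}}{2}+\tbinom{b_{i+1}}{2}+b_{i-1}b_{i+1}-b_{i-1}-b_{i+1}\Bigr)-\sum_i e_ie_{i+1}-\#\{i:e_i=3\}.
\]
Since each $b_j\in\{2,3\}$ and at most one cluster has size $2$, the parenthesised coefficient of every $e_i$ is at least $5$ (it equals $9$ when both neighbours have size $3$, and $5$ when one neighbour has size $2$); moreover $\sum_i e_ie_{i+1}\le 3\sum_i e_i$ and $\#\{i:e_i=3\}\le\tfrac13\sum_i e_i$ because each $e_i\le 3$. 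Hence $f(H_n)-f(G_{max})\ge\tfrac53\sum_i e_i$, which is strictly positive whenever some $e_i>0$; this contradicts $f(G_{max})\ge f(H_n)$, so all $e_i=0$.

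The main obstacle is the cycle classification — especially ruling out any induced cycle that uses two adjacent vertices of a single cluster without being a triangle, and any cycle that traverses the braid more than once, so that the short ``local'' induced cycles are exactly the $(2,2)$- and $(1,2,1)$-quadrilaterals. Once that is settled, the remainder is routine counting, and the $\tfrac53$ slack in the final inequality comfortably absorbs the irregularities caused by the size-$2$ cluster when $n\equiv 2$. As a consistency check, substituting $e_i\equiv 0$ into the tally reproduces the values of $m(n)$ for $n\equiv 0,2$ recorded in Corollary~\ref{maincor}.
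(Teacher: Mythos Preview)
Your argument is correct, but it takes a genuinely different route from the paper's proof.

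The paper argues locally: it fixes a single intra-cluster edge $e=uv$ in a cluster $B$ with neighbouring clusters $A$ and $C$, observes that the only induced cycles through $e$ are triangles (at most $|A|+|C|+(|B|-2)$ of them), and notes that deleting $e$ creates at least $|A|\,|C|$ new induced $C_4$'s of your $(1,2,1)$ type. The inequality $|A|\,|C|>|A|+|C|+(|B|-2)$ then gives a contradiction directly, except in the boundary case $|B|=3$ with a neighbouring cluster of size~2, which is dispatched with a short extra count. So the paper never needs the full classification of induced cycles in a cyclic braid; it only needs to understand cycles through one edge and the specific $C_4$'s created by removing it.

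Your approach instead classifies \emph{all} induced cycles of the cyclic braid (winding cycles, the two kinds of quadrilaterals, and triangles), writes down an exact formula for $f(H_n)-f(G_{\max})$ in terms of the intra-cluster edge counts $e_i$, and shows the difference is at least $\tfrac{5}{3}\sum_i e_i$. This is more work up front---the classification is the real content, and your ``chord-chasing'' sketch is accurate for $|B_i|\le 3$---but it pays off in a cleaner endgame with no case analysis and an explicit quantitative gap. It also yields the exact value of $f$ for any cyclic braid with these cluster sizes as a by-product, which the paper's local argument does not. One small remark: your claim that $|B_i|\le 3$ is used \emph{only} in the adjacent-pair case is not quite right---it is also what prevents a $C_4$ living entirely inside a single cluster in the non-adjacent case---but this does not affect the argument here since all clusters have size at most~$3$.
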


\begin{proof}
First observe, that if $e$ is an edge within a cluster, the only induced cycles containing $e$ can be triangles, either contained within the cluster, or containing exactly one vertex from a neighbouring cluster; or induced copies of $C_4$ within the cluster (in the case that the cluster contains 4 vertices). 

Let $B$ be a cluster adjacent to clusters $A$ and $C$. Suppose there exists an edge $e = uv$ where $u,v \in V(B)$. The edge $e$ is contained in at most $|A| + |C| + (|B| - 2)$ induced cycles within $G_{max}$. The graph $G' = G_{max}\backslash\{e\}$ will contain at least $|A||C|$ induced copies of $C_4$ (for any $x \in A$, $y \in C$, the set $\{x,y,u,v\}$ induces a $C_4$) that are not induced cycles in $G_{max}$. 

As $G_{max}$ does not contain both a cluster of size 2 and a cluster of size 4, we have
$$|A||C| > |A| + |C| + (|B| - 2),$$
unless $|B|=3$ and at least one of $|A|$ or $|C|$ is equal to 2. Except for this case, the number of induced cycles in $G' = G_{max}\backslash \{e\}$ is greater than the number of induced cycles in $G_{max}$, a contradiction.

Now suppose $|B|=3$ and suppose without loss of generality that $A = \{a_1,a_2\}$. First consider the case where $|C| = 3$. Suppose $B$ contains an edge $e=uv$. This edge is contained in at most 6 triangles in $G_{max}$. By the above argument, $A$ does not contain an edge. The graph $G' = G_{max}\backslash \{e\}$ will contain at least 7 induced copies of $C_4$ that are not induced cycles in $G_{max}$ (for any $x \in A$, $y \in C$, the sets $\{x,y,u,v\}$ and $\{a_1,a_2,u,v\}$ induce copies of $C_4$). Thus $f(G') > f(G_{max})$, a contradiction. 

The remaining case to consider is when $C = \{c_1,c_2\}$. If $B$ contains an edge $e$, this edge is contained in at most 5 triangles in $G_{max}$. The graph $G' = G_{max}\backslash \{e\}$ contains at least 6 induced copies of $C_4$ that are not induced cycles in $G_{max}$. Thus $f(G') > f(G_{max})$, a contradiction. So no cluster in $G_{max}$ contains an edge. 
\end{proof}

We have proved Theorem \ref{main} in the cases where $n \equiv 0$ or 2 modulo 3. It remains to prove the result in the case $n \equiv 1$ modulo 3. 

\begin{lem}\label{1mod3}
When $n \equiv 1$ modulo 3, $G_{max}$ is isomorphic to $H_n$.
\end{lem}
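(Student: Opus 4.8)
The plan is as follows. By Corollary \ref{mainstru}, when $n\equiv 1$ modulo $3$ the graph $G_{max}$ is a cyclic braid whose clusters are either (a) one of size $4$ together with $(n-4)/3$ of size $3$, or (b) two of size $2$ together with $(n-4)/3$ of size $3$. The first step is to show, as in Lemma \ref{noedges}, that no cluster of $G_{max}$ contains an edge. Suppose a cluster $B$, adjacent to clusters $A$ and $C$, contains an edge $e=uv$. Deleting $e$ destroys only the induced cycles through $e$: these are triangles (at most $|A|+|C|+(|B|-2)$ of them) together with, when $|B|=4$, at most one induced $C_4$ inside $B$; in all, the loss is at most $|A|+|C|+2$. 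On the other hand deleting $e$ creates the $|A|\cdot|C|$ new induced $4$-cycles $\{x,y,u,v\}$ with $x\in A$, $y\in C$ (which were ``diamonds'' while $uv$ was present). When $|B|\in\{2,4\}$, or when $|A|,|C|\ge 3$, one checks directly that $|A|\cdot|C|$ strictly exceeds $|A|+|C|+2$ — in particular, since there are only two clusters of size $2$, a cluster of size $2$ cannot have two neighbours of size $2$. The only remaining possibility, which occurs in case (b), is that $|B|=3$ and $B$ has a neighbour $A$ of size $2$; this is handled by a small case analysis: if $B\ne K_3$ one may pick $e$ with $B\setminus\{u,v\}$ adjacent to at most one of $u,v$, making the loss at most $|A|+|C|$; if $B=K_3$ and $A=\{a_1,a_2\}$ is edgeless, the set $\{a_1,a_2,u,v\}$ is an extra new $C_4$; and if $B=K_3$ and $A=\{a_1,a_2\}$ has its edge, then one instead deletes $a_1a_2$, destroying at most $|B|+|A'|=6$ triangles but creating $|B|\cdot|A'|=9$ new $4$-cycles, where $A'$ is the other neighbour of $A$ (which is forced to have size $3$, as $n$ is large). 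In each case $f$ strictly increases, contradicting maximality, so $G_{max}$ is an \emph{empty} cyclic braid.

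The second step is to classify the induced cycles of an empty cyclic braid $B=(B_1,\ldots,B_L)$ with $L$ large. I claim these are exactly: the $\prod_i|B_i|$ \emph{long} cycles obtained by choosing one vertex from each cluster (length $L$); and the induced $4$-cycles, of which there are two kinds — two vertices from each of two adjacent clusters, or two vertices from one cluster together with one vertex from each of its two neighbours. This follows from an elementary analysis of how an induced cycle can traverse the cluster structure: a cycle meeting a cluster in two vertices is forced, because each such vertex is joined to the whole of the two neighbouring clusters, to ``turn around'' into one of the two $4$-cycle shapes, while a cycle missing some cluster lies in a bipartite linear braid, so has no other option. With this in hand, counting is routine. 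For a braid of type (a) one gets $\prod_i|B_i|=4\cdot 3^{(n-4)/3}$ long cycles and exactly $12n+51$ induced $4$-cycles, so $f=4\cdot 3^{(n-4)/3}+12n+51=f(H_n)$. For any empty braid of type (b) one again gets $4\cdot 3^{(n-4)/3}$ long cycles, but at most $12n-56$ induced $4$-cycles (the optimum being the two size-$2$ clusters adjacent), which is strictly less than $12n+51$. Hence a graph of type (b) would satisfy $f(G_{max})<f(H_n)\le m(n)=f(G_{max})$, a contradiction, so $G_{max}$ is of type (a).

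Finally, any two empty cyclic braids with one cluster of size $4$ and $(n-4)/3$ clusters of size $3$ are isomorphic, since the single size-$4$ cluster can be rotated to any position around the cycle; hence $G_{max}\cong H_{3k+1}=H_n$. I expect the bulk of the work to lie in the first step — specifically, in the tie cases where a size-$3$ cluster has a size-$2$ neighbour, which is precisely where the case $n\equiv1$ differs from the cases $n\equiv 0,2$ treated in Lemma \ref{noedges} — and in making the classification and the $4$-cycle count of the second step fully precise.
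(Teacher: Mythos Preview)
Your approach is essentially the paper's: first show the clusters are edgeless, then compare the two candidate empty cyclic braids by counting long cycles (equal in both) and induced $C_4$'s (where $H_n$ wins). Your $C_4$ counts ($12n+51$ versus at most $12n-56$) are correct and match the paper's numbers, just organised differently.

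There are, however, two soft spots in your Step~1 case analysis. First, the blanket claim that ``$|A|\cdot|C|$ strictly exceeds $|A|+|C|+2$ when $|B|\in\{2,4\}$'' is literally false when $|B|=2$ and one neighbour also has size~$2$ (then $6<7$); what is true is that the \emph{loss} when $|B|=2$ is at most $|A|+|C|$, not $|A|+|C|+2$, and with that sharper bound the inequality does go through. Second, and more substantively, your treatment of the special configuration $|B|=3$ with \emph{both} neighbours of size~$2$ (i.e.\ $B$ sits between the two small clusters in case~(b)) is incomplete: in your subcase ``$B\ne K_3$'' you get loss $\le |A|+|C|=4$ and gain $\ge |A||C|=4$, a tie; and in ``$B=K_3$, $A$ edgeless'' you get loss $=5$ and gain $\ge 5$, again a tie. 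The fix is easy and is implicit in the paper's argument: first apply the general inequality to the size-$2$ clusters themselves (each has two size-$3$ neighbours, so $9>6$) to conclude that both $A$ and $C$ are edgeless; then in the tie cases you pick up the extra induced $C_4$ on the $C$-side ($\{c_1,c_2,u,v\}$) as well as the $A$-side one, turning $4$ vs.\ $4$ into $6$ vs.\ $4$, and $5$ vs.\ $5$ into $6$ vs.\ $5$. With those two patches your proof goes through and coincides with the paper's.
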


\begin{proof}
By Corollary \ref{mainstru} and Lemma \ref{noedges}, we know that $G_{max}$ is one of two empty cyclic braids. One possibility is that it is isomorphic to $H_n$. The other possibility is that $G_{max}$ is an empty cyclic braid $G_2$ with exactly two clusters of size 2, and the rest of size 3. An induced cycle in $H_n$ or $G_2$ either contains exactly one vertex from each cluster, or is an induced copy of $C_4$. In both $H_n$ and $G_2$, the number of induced cycles containing exactly one vertex from each cluster is $4 \cdot 3^{(n-4)/3}$. Thus the only difference in $f(H_n)$ and $f(G_2)$ comes from the number of induced copies of $C_4$.

There are two types of $C_4$. Type 1 contains vertices from exactly two clusters. Type 2 contains vertices from three clusters. The graph $H_n$ contains $3(n+5)$ induced type 1 cycles; $G_2$ contains at most $3n - 14$ of this form (fewer if the two clusters of size 2 are not adjacent). The graph $H_n$ contains $9(n+4)$ induced type 2 cycles; $G_2$ contains at most $9n -42$ of this form. Thus $H_n$ contains more induced cycles than $G_2$ and therefore $G_{max}$ is isomorphic to $H_n$. 
\end{proof}

\section{Proof of Theorem \ref{stab}}\label{sectIon: stab}
The proof of Theorem \ref{stab} follows the same lines as the proof of Theorem \ref{main}. Before proceeding with the details of the proof, we first give an outline of what is to come. Let $0 < \alpha < 1$ be any constant and let $F$ be an $n$-vertex graph containing at least $\alpha \cdot m(n)$ induced cycles. We will show that it is possible to delete a constant number of vertices from $F$ to give a graph $F'$ with maximum degree bounded by a constant. Applying Lemma \ref{genatypcount} to $F'$ then shows that the number of atypical vertices in $F'$ is bounded by a constant. The result will immediately follow. We cannot simply apply Lemma \ref{genatypcount} to $F$, as $F$ may contain vertices of arbitrarily large degree.

\begin{lem}\label{stabdeg}
Fix $0 < \alpha < 1$ and define $\Delta^* = \Delta^*(\alpha)$ to be the smallest integer such that $6\binom{\Delta^*}{2}3^{(1-\Delta^*)/3}< \alpha\cdot 4 \cdot 3^{-4/3}$. For $n$ sufficiently large, let $F$ be an $n$-vertex graph with $f(F) \ge \alpha \cdot m(n)$. Then there exists a constant $C=C(\alpha)$ such that deleting edges incident to $C$ vertices of $F$ gives a graph $H$ with $\Delta(H) \le \Delta^*$. Moreover, $f(H) \ge \frac{\alpha}{2} \cdot m(n)$.
\end{lem}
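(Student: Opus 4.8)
The plan is to prune $F$ by a greedy process: as long as the current graph has a vertex of degree exceeding $\Delta^*$, delete all edges incident to such a vertex. I will argue that this stops after at most $C(\alpha)$ steps and destroys at most $\frac{\alpha}{2}m(n)$ induced cycles altogether, which yields both assertions. Set $B:=\binom{\Delta^*}{2}3^{(n-\Delta^*-1)/3}$. Two preliminary facts are used throughout. First, $m(n)\ge 4\cdot 3^{(n-4)/3}$ in every residue class of $n$ modulo $3$ (a direct check from (\ref{leastB})), so the defining inequality for $\Delta^*$ rearranges to $B<\frac{\alpha}{12}m(n)$. Second, for any fixed $N$ the function $d\mapsto \binom{d}{2}3^{(N-d-1)/3}$ is decreasing once $d\ge 6$; the bound on $B$ forces $\Delta^*$ to be large (in particular at least $7$), so this monotonicity holds for all $d\ge\Delta^*$.

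Now run the process: put $F_0:=F$, and while $F_j$ has a vertex of degree $>\Delta^*$ let $v_j$ be one of maximum degree and take $F_{j+1}$ to be $F_j$ with all edges at $v_j$ deleted. This halts, say after $t$ steps, at a graph $H:=F_t$ with $\Delta(H)\le\Delta^*$ obtained from $F$ by deleting the edges incident to $S:=\{v_0,\dots,v_{t-1}\}$. To control the cycle loss, observe that in $F_j$ the vertices $v_0,\dots,v_{j-1}$ are already isolated, so every induced cycle of $F_j$ through $v_j$ lies in the subgraph $F_j'$ induced by the non-isolated vertices of $F_j$, and $|V(F_j')|\le n-j$. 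Applying Lemma \ref{upperv} to $F_j'$ and using the monotonicity together with $|V(F_j')|\le n-j$,
\[
f_{v_j}(F_j)=f_{v_j}(F_j')\le \binom{d_{F_j}(v_j)}{2}3^{(|V(F_j')|-d_{F_j}(v_j)-1)/3}\le \binom{\Delta^*}{2}3^{(n-j-\Delta^*-1)/3}=B\cdot 3^{-j/3}.
\]
Each step removes exactly the induced cycles through $v_j$ and creates none, so summing the geometric series,
\[
f(H)\ \ge\ f(F)-\sum_{j\ge 0}B\cdot 3^{-j/3}\ =\ f(F)-\frac{B}{1-3^{-1/3}}\ >\ f(F)-\frac{\alpha}{3}m(n)\ \ge\ \frac{2\alpha}{3}m(n)\ >\ \frac{\alpha}{2}m(n),
\]
where I used $B<\frac{\alpha}{12}m(n)$ and $\frac{1}{1-3^{-1/3}}<4$.

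It remains to bound $t$. Since $v_0,\dots,v_{t-1}$ are isolated in $H$, the graph $H$ has at most $n-t$ non-isolated vertices, so $f(H)\le m(n-t)$. Put $C:=\lceil\log_{21/20}(2/\alpha)\rceil+1$; iterating (\ref{nvsn+1}) gives $m(n-C)\le(20/21)^{C}m(n)<\frac{\alpha}{2}m(n)$ for $n$ large, while $m(k)$ is bounded by an absolute constant for $k<n_0$. Hence $t\ge C$ would force $f(H)\le m(n-t)\le \max\{m(n-C),\,O(1)\}<\frac{\alpha}{2}m(n)$, contradicting the previous display; so $t<C$, and deleting the edges at the at most $C$ vertices of $S$ completes the proof. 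The hard part is precisely this last step: a priori the greedy could run for $\Theta(n)$ steps, and the only leverage against that is the (at least geometric) growth of $m$ supplied by (\ref{nvsn+1}). This is also why the refined per-step estimate $f_{v_j}(F_j)\le B\,3^{-j/3}$ is essential rather than the crude $f_{v_j}(F_j)\le B$: the crude bound gives only $f(H)\ge f(F)-tB$, which becomes vacuous once $t$ has order $1/\alpha$, leaving the bound on $f(H)$—and so the bound on $t$—circular.
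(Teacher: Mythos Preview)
Your proof is correct and follows essentially the same approach as the paper: greedily strip high-degree vertices, use Lemma~\ref{upperv} together with the monotonicity of $d\mapsto\binom{d}{2}3^{-d/3}$ for $d\ge 6$ to bound the cycle loss at each step by a geometric series, and then compare the surviving cycle count against an upper bound on $m(n-t)$ to cap the number of steps. The only notable difference is in the last step: the paper bounds $f(H)$ above via $m(n)=\Theta(3^{n/3})$ from (\ref{mn}), whereas you use the geometric growth (\ref{nvsn+1}) directly; your route is marginally more self-contained since (\ref{mn}) itself rests on (\ref{nvsn+1}) through Lemma~\ref{deg}.
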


\begin{proof}
Suppose that $\Delta(F) > \Delta^*$ (else we are trivially done). We create a new graph $H$, with $\Delta(H) \le \Delta^*$, in the following manner. Define $F_1:= F$. Let $i$ be maximal such that $F_i$ has been defined. If there exists a vertex $v_i \in V(F_i)$ with $d(v_i) > \Delta^*$ then define $F_{i+1} := F_i \backslash \{v_i\}$. This process will terminate as $F$ has a finite number of vertices. Suppose the process terminates at a graph $F_j$. We have $\Delta(F_j) \le \Delta^*$. Define $H:= F_j$. 

To prove the first statement of the lemma it suffices to show that there exists some constant $C:= C(\alpha)$ such that $j = C$. To prove this, we will bound the size of $f(F) - f(H)$ and use this to show that when $j > C$, we have $f(F) < \alpha \cdot m(n)$, a contradiction. 

By Lemma \ref{upperv},
$$f_{v_i}(F_i) \le \binom{d_{F_i}(v_i)}{2}3^{(n- i- d_{F_i}(v_i) -1)/3} \le \binom{\Delta^*}{2}3^{(n- i- \Delta^* -1)/3},$$
where the second inequality follows as the function $\binom{x}{2}3^{-x/3}$ is decreasing for $x \ge 6$. 
So
\begin{align}
f(H) &= f(F) - \sum_{i=1}^{j-1}f_{v_i}(F_i) \nonumber \\
&\ge f(F) - \binom{\Delta^*}{2}3^{(n-\Delta^*+1)/3}\sum_{i=1}^{j-1}3^{- i/3} \nonumber \\
& \ge f(F) -3\binom{\Delta^*}{2}3^{(n-\Delta^*+1)/3} \label{alp}.
\end{align}
As $|V(H)| = n-j +1$, for some constant $c$ we have $f(H) \le c \cdot 3^{(n-j+1)/3}$  by (\ref{mn}). Combining this with (\ref{alp}) gives:
\begin{equation}\label{ff}
f(F) \le c \cdot 3^{(n-j+1)/3} + 3\binom{\Delta^*}{2}3^{(n-\Delta^*+1)/3}.
\end{equation}
There exists a constant $C$ such that, whenever $j > C$ and $n$ is sufficiently large:
\begin{equation}\label{fff}
c \cdot 3^{(n-j+1)/3} < \frac{1}{2}(\alpha\cdot 4 \cdot 3^{-4/3})3^{n/3}.
\end{equation}
Suppose that $j \ge C$ and let $n$ be sufficiently large. Using the definition of $\Delta^*$ and substituting (\ref{fff}) into (\ref{ff}), gives
$$f(F) < \alpha\cdot 4 \cdot 3^{-4/3}3^{n/3} < \alpha \cdot m(n),$$
 where the final inequality is implied by (\ref{leastB}). This contradicts the hypothesis that $f(F) \ge \alpha \cdot m(n)$. Therefore $j< C$, completing the proof of the first statement of the lemma. 

We now prove the second statement. By (\ref{alp}) we have
$$f(H) \ge f(F) - 3\binom{\Delta^*}{2}3^{(n-\Delta^*+1)/3}.$$
Given the definition of $\Delta^*$, we have $f(H) \ge \frac{\alpha}{2}m(n)$. 
\end{proof}

\begin{proof}[Proof of Theorem \ref{stab}]
Let $F$ be an $n$-vertex graph containing at least $\alpha \cdot m(n)$ induced cycles. By Lemma \ref{stabdeg}, there exist constants $c = c(\alpha)$ and $\Delta^* = \Delta^*(\alpha)$ such that deleting $c$ vertices from $F$ gives a graph $F'$ with $\Delta(F') \le \Delta^*$ and $f(F') \ge \frac{\alpha}{2}m(n)$. For any $v \in V(F')$, by Lemma \ref{vertex} we have $f_v(F) \ge \frac{\alpha}{20}$. Let $A$ be the set of $v$-atypical vertices in $F'$. By applying Lemma \ref{genatypcount} we deduce that $|A|<M$ (where $M = M(\alpha)$ is defined as in Lemma \ref{genatypcount}). By Lemma \ref{structure}, every $v$-typical vertex in $F'$ is contained in a central cluster of exactly one maximal 3-braid. We obtain $H$ from $F'$ by adding and deleting edges incident to vertices in $A$. The result follows with $C(\alpha) = c + M$. 
\end{proof}

\section{Induced odd or even cycles}\label{section:odd}
In this section we prove Theorem \ref{oddcycle}. The proofs of Theorem \ref{oddholes} and Theorem \ref{even} closely follow that of Theorem \ref{oddcycle}.

Given a graph $G$, define $f_o(G)$ to be the number of induced odd cycles contained within $G$. Similarly, for $v \in G$, define $f_o^v(G)$ to be the number of induced odd cycles in $G$ that contain $v$. We have:  
\begin{equation}\label{oddlow}
m_o(n) \ge f_o(G_n) = \Omega(3^{n/3}),
\end{equation}
where $G_n$ is defined as in Section 1.
The proof of Theorem \ref{oddcycle} follows from Theorem \ref{stab} and some arguments analogous to those used in Theorem \ref{main}. For the latter, we refer back to Sections 2 and 3 where necessary. The main difference is that, instead of applying Theorem \ref{thm:path}, we use Theorem \ref{oddpath}.
 
We fix a large constant $n_0$ and let $G$ be a graph on $n \ge n_0$ vertices that contains $m_o(n)$ induced odd cycles. In what follows we let $n_0$ be sufficiently large when required and we make no attempts to optimise the constants given in our argument.

\begin{proof}[Sketch proof of Theorem \ref{oddcycle}]
We first show that $\Delta(G)\le 35$ using analogous arguments to those in Theorem \ref{main}. 
 
Lemma \ref{vertex} holds (as (\ref{oddlow}) gives us the analogous bound to (\ref{leastB}) that we need). Thus every vertex is contained in at least $\frac{1}{20}m_o(n)$ induced odd cycles. Thus we have 
\begin{equation}\label{frel}
m_o(n+1) \ge \left(1 + \frac{1}{20}\right)m_o(n),
\end{equation}
as in (\ref{nvsn+1}).

We use the same argument as in Lemma \ref{deg}, replacing $m(n)$ with $m_o(n)$, to show that $\Delta(G) \le 35$ (we get a different value for $\Delta$ as we use the lower bound $m_o(G_n) \ge 3^{(n-8)/3}$ and this differs from the lower bound used for $m(n)$). 

By (\ref{oddlow}) we have $f_o(G_n) = \Omega(3^{n/3})$. Thus applying Theorem \ref{stab} shows that there exists a constant $c$ such that adding and deleting edges incident to $c$ vertices of $G$ gives a cyclic braid $H$ with the same cluster sizes as $H_n$. Using this and the knowledge that $\Delta(G)$ is bounded by a fixed constant, it is seen that $G$ contains a 3-braid $B$ of even length such that $|V(B)| = rn$ for some constant $r$.  

We now show that $G$ is a cyclic braid. We use essentially the same argument as in Lemma \ref{cbraid} with $f_o(G_n)$ in place of $f(H_n)$. However, when applying the process of deleting central clusters, we delete a pair of adjacent clusters at a time (to maintain the count of odd cycles). We again reach a graph $H'$ such that there exists a constant $\beta$ with $|V(H')|=\beta$. We make the analogous deductions from there to reach the bound 
$$f_2^o(n) \le p_2^o(H';x,y).$$  We then apply Theorem \ref{oddpath} to determine that $H' \in \mathcal{F}^o_{|V(H')|}$. Reversing the process of deleting central clusters to obtain $H$ from $H'$, we get that $H \in \mathcal{F}^o_{|V(H)|}$. Therefore $G$ is a cyclic braid, with clusters all of size 3 except: 
\begin{itemize}
\item [-]three clusters of size 2, when $n \equiv 0$ modulo 6;
\item [-]two clusters of size 2, when $n \equiv 1$ modulo 6;
\item [-]one cluster of size 2, when $n \equiv 2$ modulo 6;
\item [-] a single cluster of size 4, when $n \equiv 4$ modulo 6; and
\item [-]either two clusters of size 4 or four clusters of size 2, when $n \equiv 5$ modulo 6.
\end{itemize}
It remains to determine whether there are edges within the clusters, the relative positions of the clusters in the cyclic braid (in the cases where more than one cluster does not have size 3) and, in the case $n \equiv 5$ modulo 6, to determine the precise cluster sizes. Using arguments of a similar nature to those in Lemma \ref{noedges} and Lemma \ref{1mod3}, it can be checked that $G \cong G_n$ for every value of $n\ge n_0$.
\end{proof} 

Theorem \ref{oddcycle} determines which $n$-vertex graphs contain the maximum number of odd cycles. Following essentially the same argument we prove Theorem \ref{oddholes} and Theorem \ref{even}, which determine the family of $n$-vertex graphs that contain the maximum number of odd holes or even holes respectively. 

\begin{proof}[Sketch proof of Theorem \ref{oddholes} and Theorem \ref{even}]
We use the same argument as in the proof of Theorem \ref{oddcycle}. In the case of Theorem \ref{even}, the argument can be modified to consider even induced cycles rather than odd. The main difference is at the final stage, where we know $G$ is a cyclic braid and the possible cluster sizes in $G$. Changing the positions of clusters and edges within clusters can only affect the holes that do not contain a vertex from every cluster. Thus any hole that can be affected has size 3 or 4.  

For the odd hole case, the positions of the clusters and the existence of edges within clusters will not alter the number of odd holes (as any induced cycle with size 3 or 4 is not an odd hole). In the even case, a simple check shows the number of holes of size 4 (given the cluster sizes) is maximised when $G$ is isomorphic to $E_n$. 
\end{proof}

\section{Conclusion}\label{section:end}

For sufficiently large $n$, we have determined precisely which graphs on $n$ vertices contain the maximum 
number of induced cycles, the maximum number of odd or even induced cycles, and the maximum number of holes. 
However, there are a number of interesting related 
questions.

In our proofs above we make no attempts to optimise the value of $n_0$. We know that in some small cases, $H_n$ does not contain the maximum number of induced cycles \cite{bm,mr}. We 
believe Theorem \ref{main} ought to be true for $n_0 = 30$, but our proof gives 
a much larger number. There are several places where we could improve the bound, 
most notably by choosing a more careful strategy in Lemma \ref{genatypcount}.
However we omit the details as the bound would still be extremely large.

It is natural to consider induced cycles of some length that depends on 
$n$. Let $c(n,l)$ be the maximum number of length $l$ induced cycles that can 
be contained in a graph on $n$ vertices. Let $C(n,l)$ be the set of graphs 
containing $c(n,l)$ induced cycles of length $l$. 

\begin{ques}
For $l = l(n)$, what is $C(n,l)$?
\end{ques}

When $l$ is linear we believe the following should hold.

\begin{conj}
Fix $c \in (0,1)$. If $l(n) = \lceil cn \rceil$, then for sufficiently large $n$ 
the only graphs in $C(n,l)$ are cyclic braids of length $l$.
\end{conj}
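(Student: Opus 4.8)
The plan is to re-run the argument of Sections~\ref{section:path}--\ref{section:cycle} with ``induced path (or cycle) of prescribed linear length'' in place of ``induced path (or cycle)''. Fix $c\in(0,1)$, write $l=l(n)=\lceil cn\rceil$, and let $P(n,l)$ be the maximum of $\prod_{i=1}^{l}a_i$ over positive integer compositions $a_1+\dots+a_l=n$; by the integer AM--GM inequality this maximum is attained by the (near-)balanced composition, and $P(n,l)=\exp(\Theta_c(n))$. A balanced empty cyclic braid of length $l$ on $n$ vertices has exactly $P(n,l)$ induced $l$-cycles, namely its transversals, so $c(n,l)\ge P(n,l)$; and it is straightforward to check from the cyclic-braid adjacency pattern and the induced condition that every induced cycle of length at least $5$ in a cyclic braid of length $l$ has winding number $\pm1$ on the cluster cycle, hence is a transversal. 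Thus any cyclic braid of length $l$ has exactly $\prod_i|B_i|\le P(n,l)$ induced $l$-cycles, irrespective of edges inside clusters, with equality precisely for the balanced compositions. Hence it suffices --- and is exactly the conjecture --- to prove that every $n$-vertex graph with $c(n,l)$ induced $l$-cycles is a cyclic braid of length $l$; in particular there is no analogue of the ``cleaning'' Lemmas~\ref{noedges}--\ref{1mod3} to carry out, since for general $c$ the extremal set is genuinely a family.

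The one new ingredient needed is a length-restricted analogue of Theorem~\ref{thm:path} (and of Theorem~\ref{oddpath}): for $m=m(N)$ linear in $N$, determine the $N$-vertex graphs with a distinguished pair $x,y$ that maximise the number of induced $x$--$y$ paths on exactly $m$ vertices, and show they are precisely the braids of length $m$ with (near-)balanced central clusters and end clusters $\{x\},\{y\}$. I would prove this by re-running Lemma~\ref{pathtree}: in the $x$-$y$-path tree, track $L_{y,m}(z)$, the number of leaves of $B(z)$ that lie at depth exactly $m-1$ and have $t$-value $y$. A greedy path maximising this quantity has length exactly $m-1$ and ends at $y$ --- once $L_{y,m}(v_0)>0$ the greedy quantity stays positive, and a leaf of $B(v_i)$ at the required depth lies strictly below $v_i$ whenever $i<m-1$, so the path cannot terminate early --- whence $L_{y,m}(v_0)\le\prod_{i=0}^{m-2}D(v_i)$ with $\sum_{i=0}^{m-2}D(v_i)\le N-2$, and integer AM--GM gives the bound. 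The equality analysis of Lemma~\ref{pathtree} and the braid-reconstruction of Claim~\ref{Gbraid} can then be adapted, but here lies the main obstacle: one must first exclude ``wasted'' structure in an extremal graph --- no induced $x$--$y$ path may stall short of length $m-1$ or arrive at $y$ early, and every induced path out of $x$ must extend to one of length $m-1$ ending at $y$ --- and one must thread the length constraint through the exchange argument showing that \emph{every} root-to-leaf path is greedy-optimal. The cleanliness of the unrestricted proof comes precisely from the fact that it never has to track the length of these paths, so this step requires genuinely new care; everything downstream is a faithful adaptation.

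Granting the length-restricted path theorem, the remainder mirrors Sections~\ref{section:path}--\ref{section:cycle}. Lemma~\ref{upperv} becomes $f^{(l)}_v(G)\le\binom{d(v)}{2}\,p_2^{(l-1)}(n-d(v)+1)$, where $p_2^{(m)}(N)$ is the length-restricted path maximum (it vanishes once $d(v)$ leaves too few vertices for an $l$-cycle through $v$). Since every induced $l$-cycle has exactly $l$ vertices, averaging gives a vertex $v$ with $f^{(l)}_v(G_{max})\ge\tfrac{l}{n}c(n,l)\ge\tfrac{c}{2}c(n,l)$; duplicating $v$ shows $c(n+1,l)\ge(1+\tfrac{c}{2})c(n,l)$, and then the argument of Lemma~\ref{deg}, fed the $p_2^{(l-1)}$ bound, forces $\Delta(G_{max})=O_c(1)$; the duplicate-and-delete argument of Lemma~\ref{vertex}(iii) gives $f^{(l)}_v(G_{max})=\Omega_c(c(n,l))$ for every $v$, whence $c(n,l)=\Theta_c(P(n,l))$. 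The exploration tree, the typical-game and Lemma~\ref{genatypcount} then apply almost verbatim, with ``good'' meaning ``has a (near-)balanced number of children'' and with $P(n,l)^{1/n}$ replacing $3^{1/3}$ as the exponential base, so all but $O_c(1)$ vertices carry the local cyclic-braid structure of the analogue of Lemma~\ref{structure}. Finally, as in Lemma~\ref{cbraid}, contracting the supercentral part of a long balanced-braid segment (each contraction lowering the target path length by one) reduces to a bounded graph $H'$; the length-restricted path theorem identifies $H'$ as a braid of the appropriate length, and reversing the contractions shows that $G_{max}$ is a cyclic braid of length $l$, as required.
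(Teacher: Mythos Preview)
The statement you are attempting to prove is a \emph{conjecture}, not a theorem: the paper states it in the concluding Section~\ref{section:end} as an open problem that the authors ``believe should hold'', and gives no proof. There is therefore no proof in the paper to compare your proposal against.

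What you have written is a plausible plan for attacking the conjecture by adapting the machinery of Sections~\ref{section:path}--\ref{section:cycle}, and you have correctly identified the key new ingredient that would be needed: a length-restricted analogue of Theorem~\ref{thm:path}. You also honestly flag the main obstacle (``this step requires genuinely new care''). I would add two further difficulties that make this more than a routine adaptation. First, the entire typical-vertex apparatus (Definition~\ref{typG}, Lemma~\ref{structure}, Lemma~\ref{genatypcount}) is built around the single distinguished cluster size $3$: ``good'' means exactly three children, and the gain-per-bad-vertex calculation in Claim~\ref{qi} rests on the fact that $3^{k/3}/k$ is uniquely minimised at $k=3$. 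For general $c$ the target cluster size is $1/c$, typically non-integer, so a ``typical'' vertex must be allowed either $\lfloor n/l\rfloor$ or $\lceil n/l\rceil$ children; Lemma~\ref{structure} then no longer yields a single local picture, and the arithmetic driving Lemma~\ref{genatypcount} becomes substantially more delicate. Second, your contraction step in the analogue of Lemma~\ref{cbraid} (``each contraction lowering the target path length by one'') changes both $n$ and $l$ simultaneously, so you must control how the extremal path count $p_2^{(m)}(N)$ behaves along this two-parameter descent, not just along the one-parameter descent used in the paper. None of this is obviously fatal, but as written your proposal is a programme rather than a proof.
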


Perhaps a similar result holds down to cycles of length $\Omega(\sqrt{n})$.

\begin{ques}
Suppose $l(n) > \sqrt{n}$. For sufficiently large $n$, are all graphs in 
$C(n,l)$ cyclic braids?
\end{ques}

Another related question is to ask about induced subgraphs which are 
subdivisions of some fixed graph $H$. 

\begin{ques}
Given a fixed finite graph $H$, what is the maximum number of induced 
subdivisions of $H$ that can be contained in a graph on $n$ vertices (and which 
graphs realise this maximum)?
\end{ques}

Theorem \ref{main} answers this question for $H=C_3$, but what happens for other graphs?
For instance, which graphs maximise the number of induced subdivisions of 
$K_{1,3}$?  For large $n$, are the extremal graphs always blowups of some subdivision of $H$?
The rooted version of the question is also interesting, where we consider
induced subdivisions of $H$ where the branch vertices are fixed
(for instance Theorem \ref{thm:path} is a result of this form for $H=K_2$).

Finally, we remark that the related problem of finding the graph on $n$ vertices that 
contains the most cycles (not necessarily induced) is trivial as $K_n$ is the extremal graph. 
However, the problem becomes interesting when we forbid certain subgraphs (see Arman, Gunderson and Tsaturian \cite{gund} and Morrison, Roberts and Scott  
\cite{cycles}).

\section{Acknowledgments}

We would like to thank Brendan McKay and Mike Robson for helpful discussions. We would also like to thank Andrew Treglown for bringing \cite{tuz} to our attention and the referees for their helpful comments.

\bibliography{induce}

\providecommand{\bysame}{\leavevmode\hbox to3em{\hrulefill}\thinspace}
\providecommand{\MR}{\relax\ifhmode\unskip\space\fi MR }
\providecommand{\MRhref}[2]{%
  \href{http://www.ams.org/mathscinet-getitem?mr=#1}{#2}
}
\providecommand{\href}[2]{#2}
\begin{thebibliography}{10}

\bibitem{gund}
A.~Arman, D.~S. Gunderson, and S.~Tsaturian, \emph{Triangle-free graphs with
  the maximum number of cycles}, Discrete Math. \textbf{339} (2016), no.~2,
  699--711.

\bibitem{bal}
J.~Balogh, P.~Hu, B.~Lidick{\'y}, and F.~Pfender, \emph{Maximum density of
  induced 5-cycle is achieved by an iterated blow-up of 5-cycle}, European J.
  Combin. \textbf{52} (2016), no.~part A, 47--58.

\bibitem{behj}
B.~Bollob{\'a}s, Y.~Egawa, A.~Harris, and G.~P. Jin, \emph{The maximal number
  of induced {$r$}-partite subgraphs}, Graphs Combin. \textbf{11} (1995),
  no.~1, 1--19.

\bibitem{bnt}
B.~Bollob{\'a}s, C.~Nara, and S.~Tachibana, \emph{The maximal number of induced
  complete bipartite graphs}, Discrete Math. \textbf{62} (1986), no.~3,
  271--275.

\bibitem{bs}
J.~I. Brown and A.~Sidorenko, \emph{The inducibility of complete bipartite
  graphs}, J. Graph Theory \textbf{18} (1994), no.~6, 629--645.

\bibitem{el}
C.~Even-Zohar and N.~Linial, \emph{A note on the inducibility of 4-vertex
  graphs}, Graphs Combin. \textbf{31} (2015), no.~5, 1367--1380.

\bibitem{exoo}
G.~Exoo, \emph{Dense packings of induced subgraphs}, Ars Combin. \textbf{22}
  (1986), 5--10.

\bibitem{hhn}
H.~Hatami, J.~Hirst, and S.~Norine, \emph{The inducibility of blow-up graphs},
  J. Combin. Theory Ser. B \textbf{109} (2014), 196--212.

\bibitem{h}
J.~Hirst, \emph{The inducibility of graphs on four vertices}, J. Graph Theory
  \textbf{75} (2014), no.~3, 231--243.

\bibitem{bm}
Brendan McKay, personal communication.

\bibitem{cycles}
N.~Morrison, A.~Roberts, and A.~Scott, \emph{Maximising the number of cycles in
  {H}-free graphs}, In preparation.

\bibitem{pip}
N.~Pippenger and M.~C. Golumbic, \emph{The inducibility of graphs}, J.
  Combinatorial Theory Ser. B \textbf{19} (1975), no.~3, 189--203.

\bibitem{mr}
Mike Robson, personal communication.

\bibitem{tuz2}
Z.~Tuza, \emph{Problems and results on graph and hypergraph colorings},
  Matematiche (Catania) \textbf{45} (1990), no.~1, 219--238 (1991), Graphs,
  designs and combinatorial geometries (Catania, 1989).

\bibitem{tuz}
\bysame, \emph{Unsolved combinatorial problems.}, BRICS Lecture Series, LS-01-1
  (2001), (available from http://www.brics.dk/publications/).

\bibitem{tuz3}
\bysame, \emph{Problems on cycles and colorings}, Discrete Math. \textbf{313}
  (2013), no.~19, 2007--2013.

\end{thebibliography}
	\bibliographystyle{amsplain}





\end{document}